\documentclass[11pt, a4paper]{article}
\usepackage{amsmath}
\usepackage{wasysym}
\usepackage{latexsym}
\usepackage{amsfonts}
\usepackage{mathrsfs}
\usepackage{amssymb}
\usepackage{ifsym}
\usepackage{dsfont}
\usepackage[all]{xy}
\usepackage{authblk}

\newtheorem{theorem}{Theorem}[section]

\newtheorem{definition}[theorem]{Definition}
\newtheorem{lemma}[theorem]{Lemma}
\newtheorem{corollary}[theorem]{Corollary}

\newtheorem{question}[theorem]{Question}

\newenvironment{proof}[1][Proof]{\begin{trivlist}
\item[\hskip \labelsep {\bfseries #1}]}{\end{trivlist}}

\newcommand{\qed}{\nobreak \ifvmode \relax \else
      \ifdim\lastskip<1.5em \hskip-\lastskip
      \hskip1.5em plus0em minus0.5em \fi \nobreak
      \vrule height0.75em width0.5em depth0.25em\fi}

\begin{document}
\title{On a slight weakening of Kripke-Platek Set Theory} 
\author[1]{Zachiri McKenzie}
\affil[1]{University of Chester}
\maketitle

\begin{abstract}
The weak set theory $\mathsf{ReR}$ is obtained from Kripke-Platek Set Theory ($\mathsf{KP}$) by replacing the bounded collection scheme with the bounded replacement scheme. We show that $\mathsf{ReR}$ proves $\mathsf{TCo}$, which asserts that every set is contained in a transitive set. This is used to show that the theories obtained by adding the negation of the axiom of infinity to $\mathsf{ReR}$ and $\mathsf{KP}$ have the same consequences. Our proof of $\mathsf{TCo}$ relies on the availability of a fragment of class foundation in $\mathsf{ReR}$. To demonstrate the necessity of this reliance, even in the presence of infinity, we build a model of a significant fragment of $\mathsf{ZF}$ that includes bounded separation and collection, infinity, powerset, regularity and the axiom of choice, in which $\mathsf{TCo}$ fails.
\end{abstract}

\section[Introduction]{Introduction}

On of the strongest theories studied in \cite{mat06} is the set theory $\mathsf{ReR}$ that is obtained from Kripke-Platek Set Theory (without the Axiom of Infinity) by replacing the bounded collection scheme with the bounded replacement scheme. From the work of Zarach \cite{zar96}, we know that there are theorems provable in Kripke-Platek Set Theory that are not provable in $\mathsf{ReR}$. In this paper, we show that $\mathsf{ReR}$ proves that every set is contained in a transitive set, answering a question \cite[Problem 2.107]{mat06} posed by Mathias. This result is used to show that $\mathsf{ReR}$ is capable of defining and proving the bijectivity of the inverse Ackermann interpretation that describes a correspondence between the hereditarily finite sets and the finite von Neumann ordinals. This is used to show that the theory obtained by adding the negation of the Axiom of Infinity to $\mathsf{ReR}$ is the same as the theory obtained by adding the negation of the Axiom of Infinity to Kripke-Platek Set Theory. In particular, the theory $\mathsf{ReR}$ proves that for all sets $x$, the set of finite subsets of $x$ is a set, answering another question \cite[Problem 8.26]{mat06} posed by Mathias.

In section \ref{Sec:ModelInWhichTCoFails}, we build a model of a significant fragment of set theory including bounded collection and separation, the powerset axiom, the axiom of choice, infinity and foundation for sets, in which the axiom of transitive containment fails. In particular, this shows that if the scheme of foundation for $\Pi_1$-classes is weakened to foundation for sets in Kripke-Platek Set Theory with Infinity, then the resulting theory no longer proves that every set is contained in a transitive set.

\section[Background]{Background}

Throughout this paper $\mathcal{L}$ will denote the language of set theory--- first order logic with equality ($=$) and a binary relation symbol $\in$. Let $\Delta_0$ ($=\Sigma_0=\Pi_0$) be the class of $\mathcal{L}$-formulae whose quantifiers are all bounded by the $\in$ relation. The class $\Delta_0^{\mathcal{P}}$, introduced by Takahashi \cite{tak72}, consist of all $\mathcal{L}$-formulae whose quantifiers are all bounded by either $\in$ or $\subseteq$. The L\'{e}vy classes of $\mathcal{L}$-formulae $\Sigma_n$ and $\Pi_n$ are defined inductively from $\Delta_0$: a formula is $\Sigma_{n+1}$ if it is of the form $\exists x \phi$ where $\phi$ is $\Pi_n$, and a formula is $\Pi_{n+1}$ if it is of the form $\forall x \phi$ where $\phi$ is $\Sigma_n$.

Let $\Gamma$ be a collection of $\mathcal{L}$-formulae. We use $\Gamma\textsf{-Separation}$ and $\Gamma\textsf{-Collection}$ to denote the restrictions of the usual axiom schemes of $\textsf{Separation}$ and $\textsf{Collection}$, respectively, to formulae that are in $\Gamma$. We use ($\Gamma$-)$\textsf{Foundation}$ to denote the scheme asserting that every nonempty class that is the extension of an $\mathcal{L}$-formula ($\Gamma$-formula, respectively) (with parameters) contains an $\in$-minimal element. We write $\textsf{Set-Foundation}$ for the single axiom asserting that every set has an $\in$-minimal element. 

\begin{definition}
Let $\phi(x, \vec{z})$ be an $\mathcal{L}$-formula. We write $\exists ! x \phi(x, \vec{z})$ for the $\mathcal{L}$-formula
\[
\exists x \phi(x, \vec{z}) \land \forall u \forall v (\phi(u, \vec{z}) \land \phi(v, \vec{z}) \Rightarrow u =v).
\]
\end{definition}

We write ($\Gamma$-)$\textsf{Replacement}$ for the scheme consisting of the axioms: for all $\mathcal{L}$-formulae ($\Gamma$-formulae, respectively), $\phi(x, y, \vec{z})$,
\[
\forall \vec{z} \forall u ((\forall x \in u) \exists! y \phi(x, y, \vec{z}) \Rightarrow \exists v \forall y (y \in v \iff (\exists x \in u)\phi(x, y, \vec{z}))).
\]
The axiom $\textsf{Infinity}$ asserts that a superset of $\omega$ exists, and $\textsf{Powerset}$ asserts that for all $x$, the set of all subsets of $x$ exists. As usual, we use $\mathsf{AC}$ to denote the Axiom of Choice. 
\begin{itemize}
\item $\mathsf{S}_0$ is $\mathcal{L}$-theory with axioms \textsf{Extensionality} (two sets are equal if and only if they contain the same elements), and \textsf{Empty Set}, \textsf{Pair}, \textsf{Union} and \textsf{Difference} asserting, respectively, that for all $x$ and $y$, the sets $\emptyset$, $\{x, y\}$, $\bigcup x$ and $x \backslash y$ exist.
\item $\mathsf{ReR}_0$ is obtained from $\mathsf{S}_0$ adding $\Delta_0\textsf{-Replacement}$.
\item $\mathsf{ReR}$ is obtained from $\mathsf{ReR}_0$ by adding $\Pi_1\textsf{-Foundation}$.
\item $\mathsf{KP}$ is obtained from $\mathsf{ReR}$ by replacing $\Delta_0\textsf{-Replacement}$ with $\Delta_0\textsf{-Separation}$ and $\Delta_0\textsf{-Collection}$.
\item The theories $\mathsf{KP}^{\neg \infty}$ and $\mathsf{ReR}^{\neg \infty}$ are obtained from $\mathsf{KP}$ and $\mathsf{ReR}$, respectively, by adding $\neg \textsf{Infinity}$.
\item Zermelo Set Theory ($\mathsf{Z}$) is obtained from $\mathsf{S}_0$ by adding $\textsf{Separation}$, $\textsf{Infinity}$ , $\textsf{Powerset}$ and $\textsf{Set-Foundation}$. 
\end{itemize}
Note that $\mathsf{ReR}_0$ proves $\Delta_0\textsf{-Separation}$. Zarach \cite{zar96} shows that there is a model of $\mathsf{ReR}+\mathsf{Replacement}$ that does not satisfy $\mathsf{KP}$.

The inclusion of $\textsf{Pair}$ in $\mathsf{S}_0$ facilitates the coding of ordered pairs by $\langle x, y \rangle= \{ \{x\}, \{x, y\}\}$. This mean that, in theory $\mathsf{S}_0$, there is a $\Delta_0$-formula $\mathsf{OP}(x)$ saying that ``$x$ is an ordered pair", and functions $\mathsf{fst}(z)$ and $\mathsf{snd}(z)$ defined by $\Delta_0$-formulae such that for all $\langle x, y \rangle$, $\mathsf{fst}(\langle x, y \rangle)=x$ and $\mathsf{snd}(\langle x, y \rangle)= y$. In \cite{gan74}, Gandy identifies a refinement of the class of functions whose graphs are specified by $\Delta_0$-properties. A function $F$ is {\bf substitutable} if for all $\Delta_0$-formulae $\phi(u, \vec{z})$, the formula $(\exists u \in F(\vec{x}))\phi(u, \vec{z})$ is equivalent to a $\Delta_0$-formula. The operations $\mathsf{fst}$ and $\mathsf{snd}$ are substitutable, as are the G\"{o}del operations that generate constructible hierarchy \cite{gan74}. When presenting $\mathcal{L}$-formulae, we make use of the fact that many fundamental set-theoretic notions, such as ``$x$ is a natural number (a finite von Neumann ordinal)", ``$x$ is transitive", ``$f$ is a function", $x= \mathsf{dom}(f)$ and $x= \mathsf{rng}(f)$ can be expressed by $\Delta_0$-formulae. We refer the reader to \cite[Table 1 on p.14]{bar75} which provides a table of renderings of some commonly encountered set-theoretic notions as $\Delta_0$-formulae. 

A set $x$ is {\bf transitive} if for all $y \in x$ and for all $z \in y$, $y \in x$. The Axiom of Transitive Containment ($\mathsf{TCo}$) asserts that every set is contained in a transitive set, i.e.
\[
\forall x \exists y(x \subseteq y \land (\forall z \in y)(\forall w \in z)(w \in y)).
\]
We use $y=\mathsf{tcl}(x)$ to denote the formula that says $x \subseteq y$, $y$ is transitive, and for all $z$, if $z$ is transitive with $x \subseteq z$, then $y \subseteq z$. \cite[Proposition 1.20]{mat01} shows that $\mathsf{KP}$ proves $\mathsf{TCo}$. The proof of \cite[Theorem I.6.1]{bar75} (once one confirms that no more than $\Pi_1\textsf{-Foundation}$ is being used) shows that $\mathsf{KP}$ proves $\forall x \exists y(y= \mathsf{tcl}(x))$. In contrast, it has long been known that there are relatively strong subsystems of Zermelo-Fraenkel Set Theory ($\mathsf{ZF}$) that do not prove $\mathsf{TCo}$. The theory $\mathsf{Z}+\mathsf{TCo}$ proves $\textsf{Foundation}$. Therefore, the work of Jensen and Schr\"{o}der \cite{js69}, and Boffa \cite{bof69, bof70} showing that $\mathsf{Foundation}$ is not provable $\mathsf{Z}$ shows that $\mathsf{Z}$ does not prove $\mathsf{TCo}$. Mathias \cite[\S 12]{mat06} improves this result by showing that $\mathsf{Z}+\textsf{Foundation}$ does not prove $\mathsf{TCo}$. Let $\mathsf{ZF}_{\textsf{fin}}$ be the theory obtained from $\mathsf{ZF}$ by replacing $\textsf{Infinity}$ with $\neg \textsf{Infinity}$. H\'{a}jek and Vop\v{e}nca \cite{hv63}, and Hauschild \cite{hau66} show that $\mathsf{ZF}_{\textsf{fin}}$ does not prove $\mathsf{TCo}$. The issue is that $\mathsf{ZF}_{\textsf{fin}}$ does not prove $\textsf{Foundation}$. The recursion that constructs
\[
\mathsf{tcl}(x)= \bigcup \left\{ x, \bigcup x, \bigcup\bigcup x, \ldots \right\}
\]     
can be carried out in $\mathsf{S}_0+\textsf{Infinity}+\Sigma_1\textsf{-Separation}+\Delta_0\textsf{-Replacement}$ showing that, in the presence of $\textsf{Infinity}$, $\mathsf{TCo}$ follows directly from enough separation and bounded replacement or collection.

Let $\mathcal{M}= \langle M, \in^{\mathcal{M}} \rangle$ be an $\mathcal{L}$-structure, and let $a \in M$. As long as $\mathcal{M}$ is clear from the context, we will use $a^*$ to denote $\{x \in M \mid \mathcal{M} \models (x \in a)\}$, i.e. the extension of the point $a$ in $\mathcal{M}$. Let $\mathcal{N}= \langle N, \in^{\mathcal{N}} \rangle$ be an $\mathcal{L}$-structure with $\mathcal{M}$ a substructure of $\mathcal{N}$.
\begin{itemize}
\item We say that $\mathcal{M}$ is a {\bf transitive substructure} of $\mathcal{N}$, and write $\mathcal{M} \subseteq_{\mathsf{tr}} \mathcal{N}$, if for all $x \in M$ and for all $y \in N$, if $\mathcal{N} \models (y \in x)$, then $y \in M$. 
\item We say that $\mathcal{M}$ is a {\bf supertransitive substructure} of $\mathcal{N}$, and write $\mathcal{M} \subseteq_{\mathsf{tr}}^{\mathcal{P}} \mathcal{N}$, if $\mathcal{M} \subseteq_{\mathsf{tr}} \mathcal{N}$ and for all $x \in M$ and for all $y \in N$, if $\mathcal{N} \models (y \subseteq x)$, then $y \in M$.   
\end{itemize}
If $\mathcal{M} \subseteq_{\mathsf{tr}} \mathcal{N}$, then for all  $\Delta_0$-formulae, $\phi(\vec{x})$, and for all $\vec{a} \in M$, $\mathcal{M} \models \phi(\vec{a})$ if and only if $\mathcal{N} \models \phi(\vec{a})$. Similarly, if $\mathcal{M} \subseteq_{\mathsf{tr}}^{\mathcal{P}} \mathcal{N}$, then for all  $\Delta_0^{\mathcal{P}}$-formulae, $\phi(\vec{x})$, and for all $\vec{a} \in M$, $\mathcal{M} \models \phi(\vec{a})$ if and only if $\mathcal{N} \models \phi(\vec{a})$.  

\section[TCo in ReR]{Transitive closures and finite sets in $\mathsf{ReR}$} \label{Sec:TCoAndFiniteSets}

In this section we will show that the theory $\mathsf{ReR}$ proves that every set has transitive closure. This result is used to show that the theory $\mathsf{ReR}^{\neg \infty}$ is able to define and prove the bijectivity of the Ackermann correspondence between natural numbers and hereditarily finite sets. The availability of this class definable bijection means that $\mathsf{ReR}^{\neg \infty}$ and $\mathsf{KP}^{\neg \infty}$ have the same consequences. 

We begin by confirming that $\mathsf{ReR}$ proves that Cartesian products exists. The proofs of Lemmas \ref{Th:CartesianProductOfSingleton} and \ref{Th:CartesianProduct} use essentially the say argument as the one used to prove \cite[Proposition I.3.2]{bar75}. 

\begin{lemma} \label{Th:CartesianProductOfSingleton}
($\mathsf{ReR}$) For all $x$ and for all $y$, $\{x\} \times y$ exists.  
\end{lemma}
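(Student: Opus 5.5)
Fix $x$ and $y$. The plan is to obtain $\{x\}\times y$ by a single application of $\Delta_0\textsf{-Replacement}$ to the set $y$. The map is $z \mapsto \langle x, z\rangle = \{\{x\},\{x,z\}\}$, which is defined by a $\Delta_0$-formula. By $\textsf{Pair}$, $\{x\}$ and $\{x,z\}$ exist for each $z$, and hence so does $\langle x,z\rangle$.

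Concretely, let $\phi(z, w, x)$ be the $\Delta_0$-formula expressing $w = \langle x, z\rangle$. Unfolding the definition, it says that $w$ has exactly two kinds of elements: one element $a$ with $(\forall t\in a)(t=x)$ and $x\in a$, and one element $b$ with $x\in b$, $z\in b$ and $(\forall t \in b)(t=x \lor t=z)$. Every element of $w$ is one of these two kinds, and both kinds occur in $w$. All quantifiers here are bounded by $w$, $a$ or $b$, so $\phi$ is $\Delta_0$; this is the standard rendering from \cite[Table 1 on p.14]{bar75}.

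Next I would check the hypothesis $(\forall z \in y)\exists! w\,\phi(z,w,x)$ required by $\Delta_0\textsf{-Replacement}$. Existence follows by applying $\textsf{Pair}$ three times. Uniqueness follows from $\textsf{Extensionality}$: any $w$ satisfying $\phi$ has exactly the elements $\{x\}$ and $\{x,z\}$, each of which is determined by extensionality.

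$\Delta_0\textsf{-Replacement}$ then gives a set $v$ with $w\in v \iff (\exists z\in y)\, w=\langle x,z\rangle$, which is precisely $\{x\}\times y$. There is no real obstacle beyond verifying that the ordered-pair formula is genuinely $\Delta_0$ and that the uniqueness clause holds. Neither $\Pi_1\textsf{-Foundation}$ nor anything beyond $\mathsf{ReR}_0$ is needed. This is the same argument as in \cite[Proposition I.3.2]{bar75}, with collection followed by separation replaced by a single use of replacement.
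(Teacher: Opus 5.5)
Your proposal is correct and matches the paper's proof: one application of $\Delta_0\textsf{-Replacement}$ to $y$, using the $\Delta_0$-formula $w=\langle x,z\rangle$ and the fact that $(\forall z\in y)\exists! w\,(w=\langle x,z\rangle)$. Your additional checks, that the ordered-pair formula is $\Delta_0$ (your unfolding also covers the degenerate case $x=z$) and that nothing beyond $\mathsf{ReR}_0$ is used, are accurate.
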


\begin{proof}
Let $x$ and $y$ be sets. Note that $(\forall u \in y) \exists! v(v= \langle x, u \rangle)$. So, using $\Delta_0\textsf{-Replacement}$, there exists $c$ such that for all $v$, $v \in c$ if and only if $(\exists u \in y)(v= \langle x, u\rangle)$.  So, $c= \{x\} \times y$. \hfill$\square$
\end{proof}

\begin{lemma} \label{Th:CartesianProduct}
($\mathsf{ReR}$) For all $x$ and for all $y$, $x \times y$ exists.
\end{lemma}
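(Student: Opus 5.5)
The plan is to follow the two-step argument of \cite[Proposition I.3.2]{bar75}, with $\Delta_0\textsf{-Replacement}$ used in place of $\Delta_0\textsf{-Collection}$. First, fix sets $x$ and $y$. By Lemma \ref{Th:CartesianProductOfSingleton}, for every $u \in x$ the set $\{u\} \times y$ exists. By \textsf{Extensionality} it is unique. So
\[
(\forall u \in x) \exists! w\, \psi(u, w, y),
\]
where $\psi(u, w, y)$ is the formula expressing $w = \{u\} \times y$.

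The key check is that $\psi$ can be taken to be $\Delta_0$. Write it as
\[
(\forall v \in w)(\exists t \in y)(v = \langle u, t\rangle) \land (\forall t \in y)(\exists v \in w)(v = \langle u, t \rangle).
\]
Here $v = \langle u, t\rangle$ is $\Delta_0$, since the Kuratowski pair is built from pairing and each clause can be expressed with quantifiers bounded by $v$. This is the standard rendering from \cite[Table 1]{bar75}. I expect this verification to be the only mildly delicate point, and it is routine.

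Next, apply $\Delta_0\textsf{-Replacement}$ to $\psi$ with domain $x$. This yields a set $c$ such that $w \in c$ if and only if $(\exists u \in x)(w = \{u\} \times y)$. Then let $d = \bigcup c$, which exists by \textsf{Union}. For any $v$ we have $v \in d$ if and only if there is $u \in x$ with $v \in \{u\} \times y$. That holds exactly when $v = \langle u, t\rangle$ for some $u \in x$ and $t \in y$. Hence $d = x \times y$.

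Only $\mathsf{ReR}_0$ is actually used here, so no appeal to foundation is required.
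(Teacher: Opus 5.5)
Your proposal is correct and follows the paper's proof essentially step for step: the paper uses the same $\Delta_0$-formula saying that $v = \{u\} \times y$, gets existence from Lemma \ref{Th:CartesianProductOfSingleton} and uniqueness from extensionality, applies $\Delta_0\textsf{-Replacement}$ over $x$, and sets $x \times y = \bigcup c$. Your remark that only $\mathsf{ReR}_0$ is used, with no appeal to foundation, is also accurate.
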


\begin{proof}
Let $x$ and $y$ be sets. Let $\phi(u, v, y)$ be the $\Delta_0$-formula
\[
(\forall z \in v)(\exists w \in y)(z=\langle u, w\rangle) \land (\forall w \in y)(\exists z \in v)(z= \langle u, w \rangle). 
\]
Then, by Lemma \ref{Th:CartesianProductOfSingleton}, $(\forall u \in x) \exists! v \phi(u, v, y)$. So, using $\Delta_0\textsf{-Replacement}$, let $c$ be such that for all $v$, $v \in c$ if and only if $(\exists u \in x)\phi(u, v, y)$. Then $x \times y= \bigcup c$. \hfill$\square$  
\end{proof} 

Gitman, Hamkins and Johnstone \cite{ghj16} show that there models of full replacement in which the collection of formulae equivalent to a $\Sigma_1$-formula and the collection of formulae equivalent to a $\Pi_1$-formula are not closed under bounded quantification. In order to recover a normalisation procedure that can be carried out in $\mathsf{ReR}$ we identify a more restrictive class of formulae that we will use in place of $\Sigma_1$-formulae.

\begin{definition}
Let $T$ be an $\mathcal{L}$-theory. We say that an $\mathcal{L}$-formula is $\Sigma_{\mathsf{u}}$ if it is $\exists ! x \phi(x, \vec{z})$ where $\phi(x, \vec{z})$ is $\Delta_0$. We say that $\phi(\vec{z})$ is $\Delta_{\mathsf{u}}^T$ if $T$ proves that both $\phi(\vec{z})$ and $\neg \phi(\vec{z})$ are equivalent to $\Sigma_{\mathsf{u}}$-formulae. 
\end{definition}

\begin{lemma} \label{Th:UniqueSigmaNormalisation}
($\mathsf{ReR}$)
\begin{itemize}
\item[(I)] If $\phi(x, y, \vec{z})$ is $\Delta_0$, then $\exists! x \exists! y \phi(x, y, \vec{z})$ is equivalent to a $\Sigma_{\mathsf{u}}$-formula.
\item[(II)] If $\phi(x, y, \vec{z})$ is $\Delta_0$, then $(\forall x \in u) \exists! y \phi(x, y, \vec{z})$ is equivalent to a $\Sigma_{\mathsf{u}}$-formula.
\end{itemize}
\end{lemma}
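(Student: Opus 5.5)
For (I), the plan is to code the pair $(x,y)$ as a single witness. I take the candidate
\[
\exists! p\,\bigl(\mathsf{OP}(p) \land \phi(\mathsf{fst}(p), \mathsf{snd}(p), \vec{z}) \land \psi(p,\vec{z})\bigr),
\]
where $\psi$ is a bounded clause forcing uniqueness of $y$ for the given $x$. Uniqueness of $y$ given $x$ is a $\Pi_1$ condition, so the real task is to absorb the universal quantifiers into a bounded witness. The trick is to let the witness carry enough information. Read $\exists! x \exists! y\phi$ as: there is a unique $x$ such that there is a unique $y$ with $\phi(x,y,\vec z)$. Write $\theta(x) \equiv \exists! y \phi(x,y,\vec z)$. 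This is $\exists y\,\phi \land \forall y\forall y'(\phi(x,y)\land\phi(x,y')\to y=y')$. Since the $y$ is unique, I can use the $\Sigma_{\mathsf u}$ form
\[
\exists! y\,\bigl(\phi(x,y,\vec z)\bigr),
\]
but the issue is nesting: $\exists! x\,\exists! y$. Both existence and uniqueness of $x$ must be expressed by a single $\exists!$ of a $\Delta_0$ matrix.

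I would use the witness $p=\langle x,y\rangle$ and show that
\[
\exists! x\exists! y\,\phi \iff \exists! p\,\bigl(\mathsf{OP}(p)\land \phi(\mathsf{fst}(p),\mathsf{snd}(p),\vec z)\bigr).
\]
Here $\mathsf{fst}$ and $\mathsf{snd}$ are substitutable, so the matrix is $\Delta_0$ in $\mathsf{S}_0$. The right-to-left direction is immediate. The left-to-right direction fails as stated, because uniqueness of $x$ in $\exists! x\exists! y$ only says there is one $x$ for which there is exactly one $y$. Other $x'$ may have several $y$'s, which would give several pairs. So the plan is to strengthen the witness to $p=\langle x, y, w\rangle$, a triple carrying extra data that kills spurious pairs. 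A single set cannot bound quantification over all $y'$, though. I would therefore instead check carefully how $\exists!$ was defined in the paper: $\exists! x\,\Phi(x)$ is $\exists x\Phi\land\forall u\forall v(\Phi(u)\land\Phi(v)\to u=v)$. With $\Phi$ itself of the form $\exists!y\phi$, the clean equivalent is $\exists!p(\mathsf{OP}(p)\land\phi(\mathsf{fst} p,\mathsf{snd} p))\lor{}$(a degenerate case). I expect this mismatch to be the main obstacle. My first attempt would be to show the equivalence with the pair formula holds whenever the inner uniqueness is witnessed for every relevant $x$, and otherwise to adjust the matrix using the $\Pi_1$-form of uniqueness. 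That works because the outer $\forall u\forall v$ is already present in the $\exists!$ syntax.

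For (II), I will use $\Delta_0$-Replacement together with Lemma \ref{Th:CartesianProduct}. If $(\forall x\in u)\exists! y\phi$ holds, replacement applied to $\phi$ on $u$ gives a set $v$ of all the $y$'s. Then $f=\{\langle x,y\rangle\in u\times v : \phi(x,y,\vec z)\}$ exists by $\Delta_0$-Separation, which $\mathsf{ReR}_0$ proves. The proposed equivalent is
\[
\exists! f\,\bigl(f\text{ is a function}\land \mathsf{dom}(f)=u \land (\forall x\in u)\phi(x,f(x),\vec z) \land \chi(f,u,\vec z)\bigr).
\]
Here $\chi$ must express, boundedly, that no other $y$ satisfies $\phi(x,y)$. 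This is again the same obstacle. I would resolve it by observing that the uniqueness part of the $\exists! f$ syntax quantifies over all $f'$. If some $x\in u$ had two witnesses, changing $f$ at $x$ would produce a second function satisfying the matrix, contradicting $\exists! f$. The change at $x$ is available in $\mathsf{S}_0$ via Difference, Pair and Union. Conversely, if each $x$ has a unique $y$, then $f$ is unique by extensionality. This modification argument is the key idea for both parts. For (I) it likewise shows that a second $y$ for the unique $x$ yields a second pair, provided the matrix is restricted to pairs $\langle x,y\rangle$ in which $x$ is the designated unique element.
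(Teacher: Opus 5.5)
Your argument for (II) is the paper's. The witness is the function $f$ with $\mathsf{dom}(f)=u$ and $(\forall x\in u)\phi(x,f(x),\vec z)$. It exists by $\Delta_0$-Replacement, Lemma \ref{Th:CartesianProduct} and $\Delta_0$-Separation, and it is unique by extensionality. Your ``change $f$ at $x$'' argument is exactly the justification the paper leaves implicit for $\exists! f\,\theta\Rightarrow(\forall x\in u)\exists! y\,\phi$, so $\chi$ can simply be dropped.

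The gap is in (I): you never actually produce a $\Sigma_{\mathsf u}$-formula. The paper's proof is the one-line coding you set aside, $\exists! w\,(\mathsf{OP}(w)\land\phi(\mathsf{fst}(w),\mathsf{snd}(w),\vec z))$. Your objection is correct if $\exists! x\exists! y$ is unfolded literally through the single-variable definition of $\exists!$. The paper, however, evidently reads $\exists! x\exists! y\,\phi$ as ``there is exactly one pair $(x,y)$ satisfying $\phi$''. That is also what its proof of $\Sigma_{\mathsf u}$-\textsf{Replacement} needs. Under that reading both directions of the pair coding are immediate.

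Your proposed repairs cannot be completed.
\begin{itemize}
\item Restricting the matrix to pairs whose first coordinate is ``the designated unique element'' requires $\exists! y\,\phi(x,y,\vec z)$ together with its failure for every other $x$. That condition is not $\Delta_0$.
\item The triple-witness and ``degenerate case'' ideas are never made precise.
\item The modification trick borrowed from (II) only re-proves the direction you already had. The failing direction is about other $x'$ with several $y$'s, which no change of the witness pair detects.
\end{itemize}

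More decisively, under the literal reading (I) is false, so no bounded matrix can work. Take any $\Delta_0$-formula $\psi(s,t,\vec z)$, and let $\phi(x,y,\vec z)$ say: either $x=y=\emptyset$, or $x=\langle 1,s\rangle$ and either $y=\emptyset$ or $y=\langle 1,t\rangle$ with $\psi(s,t,\vec z)$. The quantifiers over $s,t$ are bounded by $\bigcup\bigcup x$ and $\bigcup\bigcup y$. Then $\emptyset$ always has exactly one $y$, while $\langle 1,s\rangle$ has exactly one $y$ iff $\neg\exists t\,\psi(s,t,\vec z)$. So, literally, $\exists! x\exists! y\,\phi$ is equivalent to the arbitrary $\Pi_2$-formula $\forall s\exists t\,\psi(s,t,\vec z)$.

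On the other side, $\exists! w\,\theta$ is equivalent to $\exists w(\theta(w)\land\forall w'(\theta(w')\Rightarrow w'=w))$, which is $\Sigma_2$. Since $\mathsf{ZFC}$ extends $\mathsf{ReR}$, the literal (I) would make every $\Pi_2$-formula $\mathsf{ZFC}$-provably $\Sigma_2$, which the L\'evy hierarchy theorem rules out for consistent $\mathsf{ZFC}$.

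The missing step is therefore to fix the intended pair reading and give the pair-coding argument, not to look for a cleverer witness.
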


\begin{proof}
Let $\phi(x, y, \vec{z})$ be a $\Delta_0$-formula. To see that (I) holds, let $\psi(w, \vec{z})$ be the $\Delta_0$-formula
\[
\mathsf{OP}(w) \land \phi(\mathsf{fst}(w), \mathsf{snd}(w), \vec{z}).
\]
Then for all $\vec{z}$, 
\[
\exists! w \psi(w, \vec{z}) \textrm{ if and only if } \exists! x \exists! y \phi(x, y, \vec{z}).
\]
To see that (II) holds, let $\theta(f, u, \vec{z})$ be the $\Delta_0$-formula
\[
(f \textrm{ is a function}) \land \mathsf{dom}(f)=u \land (\forall  x \in u) \phi(x, f(x), \vec{z}).
\]
Now, let $u, \vec{z}$ be sets. If $\exists! f \theta(f, u, \vec{z})$, then $(\forall x \in u) \exists! y \phi(x, y, \vec{z})$ holds. Conversely, assume that $(\forall x \in u) \exists! y \phi(x, y, \vec{z})$ holds. Using $\Delta_0\textsf{-Replacement}$ and Lemma \ref{Th:CartesianProduct}, we can find $f$ such that $\theta(f, u, \vec{z})$. Since $(\forall x \in u) \exists! y \phi(x, y, \vec{z})$, this $f$ is uniquely determined. \hfill$\square$  
\end{proof}

\begin{lemma} \label{Th:ClosurePropertiesDeltaU}
Let $T$ be an $\mathcal{L}$-theory such that $T \vdash \mathsf{ReR}$. 
\begin{itemize}
\item[(I)] If $\phi(\vec{x})$ is $\Delta_{\mathsf{u}}^T$, then $\neg \phi(\vec{x})$ is $\Delta_{\mathsf{u}}^T$.
\item[(II)] If $\phi_0(\vec{x})$ and $\phi_1(\vec{y})$ are $\Delta_{\mathsf{u}}^T$, then $\phi_0(\vec{x}) \land \phi_1(\vec{y})$  is $\Delta_{\mathsf{u}}^T$.
\item[(III)] If $\phi(x, \vec{z})$ is $\Delta_{\mathsf{u}}^T$, then $(\forall x \in u)\phi(x, \vec{z})$ is $\Delta_{\mathsf{u}}^T$.  
\end{itemize}
\end{lemma}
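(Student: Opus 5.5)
(I) is immediate from the definition of $\Delta_{\mathsf{u}}^T$: that $T$ proves $\phi$ and $\neg\phi$ are each equivalent to a $\Sigma_{\mathsf{u}}$-formula is symmetric in $\phi$ and $\neg\phi$, since $\neg\neg\phi$ is logically equivalent to $\phi$. So (I) needs no argument beyond this remark, and the work is in (II) and (III). For both I will show that the relevant formula and its negation are equivalent in $T$ to $\Sigma_{\mathsf{u}}$-formulae. By (I), it suffices in each case to show that the class of formulae $T$-equivalent to a $\Sigma_{\mathsf{u}}$-formula is closed under the relevant operation. This is because $\neg(\phi_0\land\phi_1)$ is equivalent to $\neg\phi_0 \lor \neg\phi_1$, and $\neg(\forall x\in u)\phi$ is equivalent to $(\exists x \in u)\neg\phi$.

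For (II), suppose $T$ proves $\phi_0 \leftrightarrow \exists! x\,\psi_0(x,\vec{x})$ and $\phi_1 \leftrightarrow \exists! y\,\psi_1(y,\vec{y})$ with $\psi_0,\psi_1$ in $\Delta_0$. Then $\phi_0\land\phi_1$ is equivalent to $\exists! x\,\exists! y\,(\psi_0 \land \psi_1)$, which is a formula of the form covered by Lemma \ref{Th:UniqueSigmaNormalisation}(I). That lemma turns it into a $\Sigma_{\mathsf{u}}$-formula via the pair $\langle x,y\rangle$.

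For the disjunction $\neg\phi_0\lor\neg\phi_1$, write the two disjuncts as $\exists! x\,\chi_0$ and $\exists! y\,\chi_1$. I will use a witness that records both components, taking it to be the pair $\langle a, b\rangle$ with $a,b \in \{\emptyset\}\cup\{\{x\}\}$. Here $a=\{x\}$ means $\chi_0(x)$ holds, and $a=\emptyset$ means $\neg\exists x\,\chi_0$. The trouble is that ``$\neg\exists x\,\chi_0$'' is not $\Delta_0$. I will avoid it by using the fact that $\neg\phi_0$ is itself $\Delta_{\mathsf{u}}$, so $\phi_0$ also has a unique witness $\exists! x'\,\psi_0(x')$. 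Exactly one of $\phi_0$ and $\neg\phi_0$ holds, so there is a unique tagged witness $\langle 0,x\rangle$ or $\langle 1,x'\rangle$ deciding which one holds, and this can be expressed by a $\Delta_0$ matrix. The disjunction is then $\exists! w$ of the following $\Delta_0$ condition: $w$ is a pair of tagged witnesses, one for the $\phi_0$-question and one for the $\phi_1$-question, and at least one of the two tags says ``negation holds''. Uniqueness of $w$ comes from the uniqueness of each tagged witness.

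For (III), suppose $\phi(x,\vec z)\leftrightarrow \exists! y\,\psi(x,y,\vec z)$ and $\neg\phi(x,\vec z)\leftrightarrow\exists! y\,\psi'(x,y,\vec z)$. Let $\theta(x,w,\vec z)$ be the $\Delta_0$-formula saying $w=\langle 0,y\rangle\land\psi(x,y,\vec z)$ or $w=\langle 1,y\rangle \land \psi'(x,y,\vec z)$. Then $T$ proves $\forall x\,\exists! w\,\theta(x,w,\vec z)$. By Lemma \ref{Th:UniqueSigmaNormalisation}(II), which uses $\Delta_0$-Replacement and Lemma \ref{Th:CartesianProduct}, there is a unique function $f$ with domain $u$ such that $\theta(x,f(x),\vec z)$ holds for every $x\in u$.

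Now $(\forall x\in u)\phi$ is equivalent to $\exists! f\,[\theta\text{-function on }u \land (\forall x\in u)(\mathsf{fst}(f(x))=0)]$. Similarly $(\exists x\in u)\neg\phi$ is equivalent to the same formula with the final clause replaced by $(\exists x\in u)(\mathsf{fst}(f(x))=1)$. Both matrices are $\Delta_0$, because $\mathsf{fst}$ and function application are substitutable. The unique existence of $f$ is guaranteed in each case, so each side holds exactly when the appropriate clause holds.

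The main obstacle is the step just used in (III): applying the same tagged-witness trick in (II) and (III) to eliminate the non-$\Delta_0$ ``no witness'' condition, which is exactly where both halves of the $\Delta_{\mathsf{u}}$ hypothesis are needed. A second point to check is that Lemma \ref{Th:UniqueSigmaNormalisation}(II) really yields existence of $f$ within $T$ and not merely its uniqueness. This is where $\Delta_0$-Replacement applied to $u\times\bigcup\ldots$ enters.
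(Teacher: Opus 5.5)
\textbf{Where the proposal fails.} Your treatment of (I) and of the positive half of (II) is fine and matches the paper. The step that fails is your claim that each conjunct has a unique tagged witness. Suppose $\phi_0\leftrightarrow\exists! x'\,\psi_0(x')$ and $\neg\phi_0\leftrightarrow\exists! x\,\chi_0(x)$. When $\phi_0$ holds you only learn $\neg\exists! x\,\chi_0(x)$, that is, $\chi_0$ has no witness \emph{or at least two}. In the second case $\langle 1,x'\rangle$, $\langle 0,x_1\rangle$ and $\langle 0,x_2\rangle$ are all tagged witnesses. The same problem breaks (III): $T\vdash\forall x\,\exists! w\,\theta(x,w,\vec z)$ is not available, so Lemma~\ref{Th:UniqueSigmaNormalisation}(II) never gives you $f$. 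Suppressing the spurious witnesses on the false side would need the $\Pi_1$ information ``there is no other witness'', which a $\Delta_0$ matrix cannot express. The paper's proof has exactly the same defect. Its three-disjunct formula under $\exists! u\exists! v$ also counts spurious $\theta_i$- and $\psi_i$-witnesses. Its formula $\exists! w(\exists x\in u)\psi(w,x,\vec z)$ for the negation in (III) additionally fails whenever two elements of $u$ falsify $\phi$.

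\textbf{This is not repairable as stated.} With the given definition of $\Delta_{\mathsf{u}}^T$, (II) is false; the positive half is fine, and it is the negation that cannot be $\Sigma_{\mathsf{u}}$. Take $T=\mathrm{Th}(V_\omega)$, which extends $\mathsf{ReR}$. For $\Delta_0$ $\beta(x,s)$, let $\theta(x,u)$ say: $u=\langle n,h\rangle$ with $h$ listing the first $n+1$ sets in Ackermann order (a $\Delta_0$ recursion on $h$), $\beta(x,h(n))$, and $(\forall k\in n)\neg\beta(x,h(k))$. Then $V_\omega$ satisfies $\exists s\,\beta\leftrightarrow\exists! u\,\theta$ and $\neg\exists s\,\beta\leftrightarrow\exists! u\,(u=\emptyset\lor\theta)$, so every $\Sigma_1$ and every $\Pi_1$ formula is $\Delta_{\mathsf{u}}^T$. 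But any $\exists! u\,\gamma$ with $\gamma$ in $\Delta_0$ defines on $\omega$ a difference of two r.e.\ sets. Now take r.e.\ sets $A,B$ with $A\cup(\omega\setminus B)$ not such a difference; these exist because the Ershov hierarchy is proper. Let $\phi_0=\neg\exists s\,\alpha$ and $\phi_1=\exists s\,\beta$ define $\omega\setminus A$ and $B$ on $\omega$. Then $\neg(\phi_0\land\phi_1)$ is not $\Sigma_{\mathsf{u}}$ over $T$. (III) fails too, via a case split over $x\in\{0,1\}$.

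\textbf{What your constructions do prove.} They establish the lemma for a stronger notion, which the paper's concrete formulas such as $y=\mathsf{tcl}(x)$ and $z=x+y$ actually enjoy, since a trace with the wrong output does not exist. Call $\phi$ strongly $\Delta_{\mathsf{u}}^T$ if there is a $\Delta_0$ formula $\delta(t,\vec x)$ with $T\vdash\forall\vec x\,\exists! t\,\delta(t,\vec x)$ and $T\vdash\delta(t,\vec x)\rightarrow(\phi(\vec x)\leftrightarrow\mathsf{fst}(t)=0)$. This notion implies $\Delta_{\mathsf{u}}^T$ and is closed under all three operations by precisely your constructions. For (II) you pair the decision witnesses. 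For (III) you use the decision function $f$ on $u$, obtained by $\Delta_0$-Replacement. For (III) this is a genuine improvement on the paper's argument, which fails even under the strong hypothesis.
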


\begin{proof}
Note that (I) follows immediately from the definition of $\Delta_{\mathsf{u}}^T$. 

To prove (II), let $\phi_0(\vec{x})$ and $\phi_1(\vec{y})$ be $\Delta_{\mathsf{u}}$-formulae. Let $\theta_0(u, \vec{x})$ and $\theta_1(v, \vec{y})$ be $\Delta_0$ such that
$T$ proves that $\phi_0(\vec{x})$ is equivalent to $\exists! u \theta_0(u, \vec{x})$ and $\phi_1(\vec{y})$ is equivalent to $\exists! v \theta_1(v, \vec{y})$. Therefore, $T$ proves that $\phi_0(\vec{x})\land \phi_1(\vec{y})$ is equivalent to $\exists!u \exists! v(\theta_0(u, \vec{x}) \land \theta_1(v, \vec{y}))$, which, by Lemma \ref{Th:UniqueSigmaNormalisation}, is equivalent to a $\Sigma_{\mathsf{u}}$-formula. Now, let $\psi_0(w, \vec{x})$ and $\psi_1(z, \vec{y})$ be $\Delta_0$-formulae such that $T$ proves that $\neg \phi_0(\vec{x})$ is equivalent to $\exists! w \psi_0(w, \vec{x})$ and $\neg \phi_1(\vec{y})$ is equivalent to $\exists! z \psi_1(z, \vec{y})$. The theory $T$ proves that $\neg(\phi_0(\vec{x}) \land \phi_1(\vec{y}))$ is equivalent to
\[
\exists! u \exists! v ((\psi_0(u, \vec{x}) \land \psi_1(v, \vec{y})) \lor (\psi_0(u, \vec{x}) \land \theta_1(v, \vec{y})) \lor (\theta_0(u, \vec{x}) \land \psi_1(v, \vec{y}))),
\]
which, by Lemma \ref{Th:UniqueSigmaNormalisation} is equivalent to a $\Sigma_{\mathsf{u}}$-formula.

To prove (III), let $\phi(x, \vec{z})$ be a $\Delta_{\mathsf{u}}^T$-formula. Let $\theta(v, x, \vec{z})$ be $\Delta_0$ such that $T$ proves that $\phi(x, \vec{z})$ is equivalent to $\exists! v \theta(v, x, \vec{z})$. Therefore, $T$ proves that $(\forall x \in u)\phi(x, \vec{z})$ is equivalent to $(\forall x \in u) \exists! v \theta(v, x, \vec{z})$, which, by Lemma \ref{Th:UniqueSigmaNormalisation}, is equivalent to a $\Sigma_{\mathsf{u}}$-formula. Let $\psi(w, x, \vec{z})$ be $\Delta_0$ such that $T$ proves that $\neg \phi(x, \vec{z})$ is equivalent to $\exists! w \psi(w, x, \vec{z})$. So, $T$ proves that $\neg (\forall x \in u) \phi(x, \vec{z})$ is equivalent to $\exists! w (\exists x \in u) \psi(w, x, \vec{z})$, which is a $\Sigma_{\mathsf{u}}$-formula. \hfill$\square$        
\end{proof}

Equipped with the notions of $\Sigma_{\mathsf{u}}$- and $\Delta_{\mathsf{u}}$-formulae, we are able to prove strengthenings of the schemes of $\Delta_0\textsf{-Separation}$ and $\Delta_0\textsf{-Replacement}$ in $\mathsf{ReR}$.

\begin{theorem}
($\mathsf{ReR}$) $\Sigma_{\mathsf{u}}\textsf{-Replacement}$.
\end{theorem}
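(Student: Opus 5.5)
Fix a $\Sigma_{\mathsf{u}}$-formula $\exists! w\, \theta(w, x, y, \vec{z})$ with $\theta$ a $\Delta_0$-formula, and write $\phi(x, y, \vec{z})$ for it. Assume $(\forall x \in u)\exists! y\, \phi(x, y, \vec{z})$. The plan is to replace the pair ``$y$ together with its unique witness $w$'' by a single object that is pinned down by a $\Delta_0$-formula, and then apply $\Delta_0\textsf{-Replacement}$ to that object.

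First, I will form the $\Delta_0$-formula
\[
\psi(x, p, \vec{z}) := \mathsf{OP}(p) \land \theta(\mathsf{snd}(p), x, \mathsf{fst}(p), \vec{z}).
\]
This is $\Delta_0$ because $\mathsf{fst}$ and $\mathsf{snd}$ are substitutable.

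Next, I will show that $(\forall x \in u)\exists! p\, \psi(x, p, \vec{z})$, by checking existence and uniqueness for a fixed $x \in u$. For existence, take the unique $y$ with $\phi(x, y, \vec{z})$, and then the unique $w$ with $\theta(w, x, y, \vec{z})$; the pair $p = \langle y, w\rangle$ then satisfies $\psi(x, p, \vec{z})$. For uniqueness, suppose $\langle y', w'\rangle$ also satisfies $\psi$. Then $w'$ witnesses $\exists w\, \theta(w, x, y', \vec{z})$. This gives only existence of a witness, not uniqueness, so by itself it does not show $\phi(x, y', \vec{z})$, and the uniqueness of $y$ cannot be used directly. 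I expect this to be the main subtle point.

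To get around it, I will strengthen $\psi$ with the $\Delta_0$ conjunct saying that $\mathsf{snd}(p)$ is the only witness: $(\forall w' \in \ldots)$ cannot express this, since the quantifier is unbounded. So instead I will change which pairs are collected. The graph of the original function consists of the $y$'s, and $\Delta_0\textsf{-Replacement}$ on $\psi$ would need uniqueness of $p$, which may fail because several $y'$ may have a witness. The fix is to apply Replacement only to those $x$ where uniqueness holds, which is all of $u$ by hypothesis, after enlarging $\psi$. Concretely, I will use Lemma \ref{Th:UniqueSigmaNormalisation}(II) to rewrite the hypothesis $(\forall x \in u)\exists! y\exists! w\, \theta$ as $\exists! f\, \chi(f, u, \vec{z})$ with $\chi$ a $\Delta_0$-formula. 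Here $\chi$ says that $f$ is a function with domain $u$ and $\theta(\mathsf{snd}(f(x)), x, \mathsf{fst}(f(x)), \vec{z})$ for each $x \in u$. This uses Lemma \ref{Th:UniqueSigmaNormalisation}(I) inside (II) to merge $\exists! y \exists! w$ into a single $\exists! p$ over $\Delta_0$. The proof of (II) provides such an $f$ via $\Delta_0\textsf{-Replacement}$ and Lemma \ref{Th:CartesianProduct}. The existence there must be checked using the pair formula: for each $x$ the pair $\langle y, w\rangle$ built above satisfies the $\Delta_0$ condition.

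Having a function $f$ that selects, for each $x \in u$, a pair $\langle y_x, w_x\rangle$ with $\phi(x, y_x, \vec{z})$, the rest is routine. I will apply $\Delta_0\textsf{-Replacement}$ to the $\Delta_0$-formula $y = \mathsf{fst}(f(x))$, which is functional in $x$, to obtain $v = \{\mathsf{fst}(f(x)) : x \in u\}$. Then $y \in v$ if and only if $(\exists x \in u)\phi(x, y, \vec{z})$, because $\phi(x, \cdot, \vec{z})$ holds of exactly one $y$, namely $\mathsf{fst}(f(x))$. The main obstacle is producing $f$, that is, verifying that the selection of $\langle y_x, w_x \rangle$ can be carried out by $\Delta_0\textsf{-Replacement}$ even though the $\Delta_0$ matrix alone does not guarantee uniqueness. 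I would first try taking $f$ to be the graph supplied by the hypothesis, replacing over $u$ with the unique-pair formula. If uniqueness genuinely fails for spurious $y'$, I would fall back on Lemma \ref{Th:UniqueSigmaNormalisation}(II) exactly as stated, which already asserts the needed equivalence in $\mathsf{ReR}$.
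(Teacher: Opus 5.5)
Your third paragraph diagnoses exactly the right problem, but the repair does not close it. For the final step you need $f$ to satisfy $\phi(x,\mathsf{fst}(f(x)),\vec{z})$, i.e.\ $\exists! w\,\theta(w,x,\mathsf{fst}(f(x)),\vec{z})$. Your $\chi$ only records $\theta(\mathsf{snd}(f(x)),x,\mathsf{fst}(f(x)),\vec{z})$, which gives a witness but not a unique one. So under the hypothesis there may be many $f$ satisfying $\chi$, some of which select a spurious $y'$, and then $y \in v \iff (\exists x \in u)\phi(x,y,\vec{z})$ fails.

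Falling back on Lemma~\ref{Th:UniqueSigmaNormalisation} is circular. To use (II) you must first merge $\exists! y\,\exists! w$ via (I), and the proof of (I) rests on the claim that $\exists! w\,\psi$ is equivalent to $\exists! x\,\exists! y\,\phi$ for the pair formula $\psi$. Only the left-to-right direction of that equivalence holds, for exactly the reason you gave. The paper's own proof has the same gap: with its pair formula $\theta$ (your $\psi$) it simply asserts ``So, $(\forall x \in b)\exists! v\,\theta(v, x, \vec{a})$''.

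Read literally (the paper's definition of $\exists!$ applied twice), the statement is not provable, so no repair of this kind can work. Work in $L_{\omega_1^{\mathrm{CK}}}$, which is a model of $\mathsf{KP}$ and hence of $\mathsf{ReR}$. Kleene's $\mathcal{O}$ is $\Sigma_1$-definable there but is not an element. One can choose a $\Delta_0$-formula $\rho(w,n)$ such that $n \in \mathcal{O} \iff \exists w\,\rho(w,n)$, each $n$ has at most one witness, and no witness equals $\emptyset$ (use the canonical rank function of the associated recursive tree, bundled with canonical auxiliary data). Let $\theta(w,x,y)$ be the $\Delta_0$-formula
\[
(y = \langle x, 1\rangle \land \rho(w,x)) \lor (y = \langle x, 0 \rangle \land (w = \emptyset \lor \rho(w,x))).
\]
If $n \in \mathcal{O}$, then $\langle n,1\rangle$ has exactly one witness and $\langle n,0\rangle$ has two. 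If $n \notin \mathcal{O}$, then $\langle n,0\rangle$ has exactly one and $\langle n,1\rangle$ has none. Thus $(\forall x \in \omega)\exists! y\,\exists! w\,\theta(w,x,y)$ holds. But the set $\{\langle n, \chi_{\mathcal{O}}(n)\rangle \mid n \in \omega\}$ required by $\Sigma_{\mathsf{u}}\textsf{-Replacement}$ would put $\mathcal{O}$ into $L_{\omega_1^{\mathrm{CK}}}$ by $\Delta_0\textsf{-Separation}$. Note also that $f(n) = \langle\langle n,0\rangle, \emptyset\rangle$ satisfies your $\chi$ and picks the wrong value on $\mathcal{O}$.

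What your argument and the paper's actually establish is the version whose hypothesis is $(\forall x \in u)\exists! p\,(\mathsf{OP}(p) \land \theta(\mathsf{snd}(p), x, \mathsf{fst}(p), \vec{z}))$, i.e.\ uniqueness of the pair $\langle y, w\rangle$. For that version, one application of $\Delta_0\textsf{-Replacement}$ followed by taking the domain suffices.
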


\begin{proof}
Work in the theory $\mathsf{ReR}$. Let $\phi(w, x, y, \vec{z})$ be a $\Delta_0$-formula. Let $b$ and $\vec{a}$ be such that $(\forall x \in b)\exists! y \exists! w \phi(w, x, y, \vec{a})$. Let $\theta(v, x, \vec{z})$ be the $\Delta_0$-formula
\[
(v \textrm{ is an ordered pair}) \land \phi(\mathsf{snd}(v), x, \mathsf{fst}(v), \vec{z}).
\]
So, $(\forall x \in b) \exists! v \theta(v, x, \vec{a})$. Using $\Delta_0\textsf{-Replacement}$, let $c$ be such that for all $v$,
\[
v \in c \textrm{ if and only if } (\exists x \in b)\theta(v, x, \vec{a}).
\]
Let $d= \mathsf{dom}(c)$. Then for all $y$,
\[
y \in d \textrm{ if and only if } (\exists x \in b) \exists! w \phi(w, x, y, \vec{a}).
\] 
This shows that $\Sigma_{\mathsf{u}}\textsf{-Replacement}$ holds. \hfill$\square$
\end{proof}

The following scheme is a strengthening of $\Delta_0\textsf{-Separation}$.
\begin{itemize}
\item[]($\Delta_{\mathsf{u}}\textsf{-Separation}$) For all $\Delta_0$-formulae $\phi(y, x, \vec{z})$ and $\theta(y, x, \vec{z})$,
\[
\begin{array}{c}
\forall \vec{z} \forall x(\exists! y \phi(y, x, \vec{z}) \iff \exists! y \theta(y, x, \vec{z})) \Rightarrow\\
\forall \vec{z} \forall w \exists v \forall x (x \in v \iff x \in w \land \exists y \phi(y, x, \vec{z}))
\end{array}.
\]
\end{itemize}

\begin{theorem}
($\mathsf{ReR}$) $\Delta_{\mathsf{u}}\textsf{-Separation}$.
\end{theorem}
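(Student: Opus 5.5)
I read the displayed scheme as the intended one: the hypothesis says that $\exists! y\,\phi(y,x,\vec z)$ and $\exists! y\,\theta(y,x,\vec z)$ are complementary, i.e. $\theta$ supplies a $\Sigma_{\mathsf u}$-definition of the negation. The conclusion separates $\{x\in w \mid \exists! y\,\phi(y,x,\vec z)\}$. This reading is my assumption, not the literal text. The plan is to imitate the proof of $\Sigma_{\mathsf u}$-Replacement: turn the set we want into the domain of a set obtained by one application of $\Delta_0$-Replacement, with a tag recording which side of the dichotomy $x$ falls on.

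Concretely, fix $w,\vec a$. Let $\psi(v,x,\vec z)$ be the $\Delta_0$-formula saying that $v$ is an ordered pair, $\mathsf{fst}(v)\in\{0,1\}$ with $0=\emptyset$ and $1=\{\emptyset\}$, and either $\mathsf{fst}(v)=0\land\phi(\mathsf{snd}(v),x,\vec z)$ or $\mathsf{fst}(v)=1\land\theta(\mathsf{snd}(v),x,\vec z)$. The steps are:

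\begin{itemize}
\item[(1)] Show that $(\forall x\in w)\exists! v\,\psi(v,x,\vec a)$.
\item[(2)] Apply $\Delta_0$-Replacement to obtain $c$ with $v\in c \iff (\exists x\in w)\psi(v,x,\vec a)$.
\item[(3)] Since the $x$ is lost in $c$, instead replace with $\langle x,v\rangle$, as in Lemma \ref{Th:UniqueSigmaNormalisation}(II), to get a function $f$ with domain $w$.
\item[(4)] Cut out $\{x\in w \mid \mathsf{fst}(f(x))=0\}$ by $\Delta_0$-Separation, which $\mathsf{ReR}_0$ proves.
\end{itemize}

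Given (1), step (4) yields exactly the required set.

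The main obstacle is step (1). If $\exists! y\,\phi$ holds, then $\exists! y\,\theta$ fails, but that failure may mean there are two or more $\theta$-witnesses rather than none. In that case $\psi$ has several solutions tagged $1$, and uniqueness breaks. My first attempt would be to make the $\theta$-branch self-defeating. Tag $1$ is allowed only for a pair $v=\langle y,y'\rangle$ where $y$ is a $\theta$-witness and $y'$ is a $\phi$-witness. Tag $0$ is taken only when there is a single $\phi$-witness certified together with a $\theta$-witness, so that the combined Cartesian data, available by Lemma \ref{Th:CartesianProduct}, pins down one object. Checking the four cases (zero, one, or many witnesses on each side) against the complementarity hypothesis is where I expect the work to lie.

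If this cannot be arranged within $\Delta_0$, I would fall back on $\Pi_1$-Foundation. Assume the separated set does not exist and look for an $\in$-minimal witness among the subsets $w'\subseteq w$, or initial segments, for which the corresponding set fails to exist. The aim is to reduce to the case where $w'$ is a singleton, where $\Delta_0$-Separation and Pair suffice, and then glue the pieces together using Union.
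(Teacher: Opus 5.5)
\paragraph{The gap is step (1).} Under the reading you adopt, this step cannot be carried out at all, because that version of the scheme is not provable in $\mathsf{ReR}$, or even in $\mathsf{KP}$.

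Take any $\Delta_0$-formula $\theta(t,x)$ that has at most one witness for each $x$, and let $\phi(y,x)$ be $y=\emptyset\lor(\exists t\in y)(y=\{t\}\land\theta(t,x))$. Then $\phi$ has exactly one witness when $\theta$ has none, and at least two otherwise. So $\exists! y\,\phi\iff\neg\exists! y\,\theta$ holds for every $x$, and your scheme together with \textsf{Difference} would give separation for $\exists y\,\theta(y,x)$.

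This fails in $L_{\omega_1^{\mathrm{CK}}}\models\mathsf{KP}$. Let $\theta(y,x)$ say that $y=\langle S_\beta\mid\beta\leq\alpha\rangle$, an initial segment of Jensen's hierarchy (whose recursion clauses are $\Delta_0$), where $\alpha$ is least such that $S_\alpha$ contains a witness to a fixed $\Sigma_1$ definition of Kleene's $\mathcal{O}$ at $x$. Then $\theta$ is $\Delta_0$ and has at most one witness, but $\{x\in\omega\mid\exists y\,\theta(y,x)\}=\mathcal{O}\notin L_{\omega_1^{\mathrm{CK}}}$.

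The situation you worried about, where the losing side has two or more witnesses, is exactly what breaks the argument. Neither of your repairs helps. The ``self-defeating'' tagging has no solution at all when $\phi$ has a unique witness and $\theta$ has none. The $\Pi_1$-\textsf{Foundation} fallback would be applied to a class that is nowhere near $\Pi_1$. Moreover, $\in$-minimality among subsets of $w$ says nothing about smaller subsets.

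\paragraph{What the paper does.} The printed scheme is garbled, but the paper's proof makes the intended hypothesis explicit, and it is stronger than yours because uniqueness is built into both matrices. For all $x,\vec z$, each of $\phi$ and $\theta$ has at most one witness, and $\exists y\,\phi\iff\neg\exists y\,\theta$.

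With that hypothesis, $(\forall x\in w)\exists! y(\phi\lor\theta)$ is immediate. A single application of $\Delta_0$-\textsf{Replacement} gives a set $c$ of all these witnesses. Then $\{x\in w\mid(\exists y\in c)\,\phi(y,x,\vec a)\}$ is the required set by $\Delta_0$-\textsf{Separation}.

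Under that hypothesis your steps (1)--(4) also go through, but the tags and the function $f$ are unnecessary. You never need to recover $x$ from $c$; you only need $c$ as a bound when searching for $x$'s $\phi$-witness.
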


\begin{proof}
Work in the theory $\mathsf{ReR}$. Let $\phi(y, x, \vec{z})$ and $\theta(y, x, \vec{z})$ be $\Delta_0$-formulae such that for all $x$ and for all $\vec{z}$, $\forall u \forall v(\phi(u, x, \vec{z}) \land \phi(v, x, \vec{z}) \Rightarrow u=v)$, $\forall u \forall v(\theta(u, x, \vec{z}) \land \theta(v, x, \vec{z}) \Rightarrow u=v)$ and $\exists y \phi(y, x, \vec{z}) \iff \neg \exists y \theta(y, x, \vec{z})$. Let $w$ and $\vec{a}$ be a sets. So, $(\forall x \in w) \exists! y(\phi(y, x, \vec{a}) \lor \theta(y, x, \vec{a}))$. So, using $\Delta_0\textsf{-Replacement}$, let $c$ be a set such that for all $y$, 
\[
y \in c \textrm{ if and only if } (\exists x \in w)(\phi(y, x, \vec{a}) \lor \theta(y, x, \vec{a})).
\]
Now, let $d= \{ x \in w \mid (\exists y \in c) \phi(y, x, \vec{a})\}$, which is a set by $\Delta_0\textsf{-Separation}$. Note that for all $x \in w$, $x \in d$ if and only if $\exists y \phi(y, x, \vec{a})$. This shows that $\Delta_{\mathsf{u}}\textsf{-Separation}$ holds. \hfill$\square$    
\end{proof} 

Utilising $\Pi_1\textsf{-Foundation}$, we can show that every finite approximation of the transitive closure exists.

\begin{lemma} \label{Th:FiniteApproximationsOfTCl}
($\mathsf{ReR}$) For all $x$ and for all natural numbers $n$, there exists a unique function $f$ with domain $n+1$ such that $f(0)=x$ and for all $k \in n$, 
\[
f(k+1)= f(k) \cup \bigcup f(k).
\]
\end{lemma}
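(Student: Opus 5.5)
Fix $x$. I would prove the statement by induction on $n$, organised as an application of $\Pi_1\textsf{-Foundation}$ to the class of natural numbers at which it fails. Let $\psi(f,n,x)$ be the $\Delta_0$-formula saying that $f$ is a function with $\mathsf{dom}(f)=n+1$, $f(0)=x$, and for all $k\in n$, $f(k+1)=f(k)\cup\bigcup f(k)$; the clause ``$y = u\cup\bigcup u$'' is bounded.

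Uniqueness is the easy half and I would handle it first. If $\psi(f,n,x)$ and $\psi(g,n,x)$, consider $\{k\in n+1 \mid f(k)\neq g(k)\}$. This is a set by $\Delta_0\textsf{-Separation}$, available in $\mathsf{ReR}_0$. If it is nonempty, $\textsf{Set-Foundation}$ (a special case of $\Pi_1\textsf{-Foundation}$) gives an $\in$-least element $k$. Then $k\neq 0$, so $k=j+1$ with $f(j)=g(j)$, hence $f(k)=g(k)$, a contradiction.

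For existence, suppose some natural number $n$ has no such $f$. The class $\{n \mid n \textrm{ is a natural number} \land \forall f\,\neg\psi(f,n,x)\}$ is $\Pi_1$ with parameter $x$, so $\Pi_1\textsf{-Foundation}$ gives an $\in$-minimal such $n$. Certainly $n\neq 0$, since $\{\langle 0,x\rangle\}$ witnesses $n=0$ and exists by $\textsf{Pair}$. So $n=m+1$, and by minimality there is $g$ with $\psi(g,m,x)$. Now set $f=g\cup\{\langle m+1, g(m)\cup\bigcup g(m)\rangle\}$. This uses $\textsf{Union}$ and $\textsf{Pair}$ to form $g(m)\cup\bigcup g(m)$, the ordered pair, and the union with $g$; note that $g(m)$ itself is obtainable as an element of $\bigcup\bigcup g$, or simply exists as a set. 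Then $\psi(f,n,x)$ holds, contradicting the choice of $n$.

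The main care point is confirming that the failure class really is $\Pi_1$. ``$n$ is a natural number'' is $\Delta_0$ and $\psi$ is $\Delta_0$, so the class is $\forall f\,\neg\psi$ conjoined with a $\Delta_0$ formula, which is $\Pi_1$. I would also check that the step producing $g(m)\cup\bigcup g(m)$ uses only $\mathsf{S}_0$ axioms, which it does. No replacement is needed beyond that already built into $\mathsf{ReR}_0$.
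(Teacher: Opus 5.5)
Your proposal is correct and follows essentially the same route as the paper: you apply $\Pi_1\textsf{-Foundation}$ to the class of natural numbers with no witnessing function, rule out the minimal counterexample by extending the function for its predecessor by one step, and get uniqueness by a straightforward induction. You also supply details the paper leaves implicit, namely the base case, the explicit one-step extension using only $\mathsf{S}_0$, and carrying out the uniqueness induction via $\textsf{Set-Foundation}$ on the $\Delta_0$-separated disagreement set.
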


\begin{proof}
Let $x$ be a set. Let $\phi(n, f, x)$ be the $\Delta_0$-formula 
\[
(n \textrm{ is a natural number}) \Rightarrow \left(\begin{array}{c}
(f \textrm{ is a function}) \land \mathsf{dom}(f)=n+1 \land\\
f(0)= x \land (\forall k \in n)\left(f(k+1)= f(k) \cup \bigcup f(k)\right) 
\end{array} \right).
\]
A straightforward induction shows that if $n$ is a natural number and $f$ and $f^\prime$ are such that $\phi(n, f, x)$ and $\phi(n, f^\prime, x)$, then $f=f^\prime$. Therefore, if the Lemma were false, then $\Pi_1\textsf{-Foundation}$ would yield a least natural number $k+1$ such that $\neg \exists f \phi(k+1, f, x)$, which is impossible. \hfill$\square$
\end{proof}

\begin{lemma} \label{Th:TClMembershipComplexity}
($\mathsf{ReR}$) The formula $u \in \mathsf{tcl}(x)$ is expressible by a $\Sigma_{\mathsf{u}}$-formula.
\end{lemma}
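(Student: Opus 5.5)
The idea is to express ``$u \in \mathsf{tcl}(x)$'' as ``$u$ appears at some finite stage of the iteration $f(k+1)=f(k)\cup\bigcup f(k)$ starting from $f(0)=x$''. The witnessing function is made unique by packaging it together with the least stage at which $u$ appears. Since $\mathsf{tcl}(x)$ need not yet be known to exist as a set, the statement should be read as: $u \in \mathsf{tcl}(x)$ means membership in the union of the finite approximations given by Lemma \ref{Th:FiniteApproximationsOfTCl}. This union is the transitive closure whenever the latter exists, because any transitive $z \supseteq x$ contains every $f(k)$ by induction (with $\Pi_1$-Foundation on $k$). Conversely, the union of the stages is transitive and contains $x$.

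Concretely, let $\theta(w, u, x)$ be the $\Delta_0$-formula asserting the following:
\begin{itemize}
\item $w$ is an ordered pair $\langle n, f\rangle$ with $n$ a natural number;
\item $f$ is a function with $\mathsf{dom}(f)=n+1$, $f(0)=x$, and $(\forall k \in n)(f(k+1)=f(k)\cup\bigcup f(k))$;
\item $u \in f(n)$;
\item either $n=0$, or $n=m+1$ and $u \notin f(m)$.
\end{itemize}
Each clause is $\Delta_0$ using $\mathsf{fst}$, $\mathsf{snd}$ and the standard renderings from \cite[Table 1]{bar75}. The equation $f(k+1)=f(k)\cup\bigcup f(k)$ is $\Delta_0$ in $f$ because the values of $f$ are reachable by bounded quantification through the pairs in $f$. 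I then claim that $u \in \mathsf{tcl}(x)$ if and only if $\exists! w\,\theta(w,u,x)$.

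For the right-to-left direction, any witness $\langle n,f\rangle$ gives $u \in f(n)$, and a routine induction shows $f(n) \subseteq \mathsf{tcl}(x)$. For the left-to-right direction there are two steps.
\begin{itemize}
\item \emph{Existence.} Suppose $u$ lies in some stage $f(n)$. The class of natural numbers $m$ for which some approximation $g$ with $\mathsf{dom}(g)=m+1$ satisfies $u \in g(m)$ is $\Sigma_1$ (indeed, by the uniqueness in Lemma \ref{Th:FiniteApproximationsOfTCl} it is also $\Pi_1$, as ``every approximation of length $m+1$ has $u$ in its last value''). So $\Pi_1$-Foundation gives a least such $m$. The approximation of length $m+1$, supplied by Lemma \ref{Th:FiniteApproximationsOfTCl}, then satisfies $\theta$.
\item \emph{Uniqueness.} The minimality clause pins down $n$, since the stages are increasing: $f(k)\subseteq f(k+1)$. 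Once $n$ is fixed, Lemma \ref{Th:FiniteApproximationsOfTCl} pins down $f$.
\end{itemize}

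The main obstacle is uniqueness. Without the minimality clause, $u$ would lie in every later stage, and $\exists! w$ would fail. Handling this requires checking that $u \notin f(m)$ together with monotonicity forces uniqueness of $n$, and that the least $n$ exists via $\Pi_1$-Foundation applied to a genuinely $\Pi_1$ class. The remaining difficulty is stating precisely which formula ``$u\in\mathsf{tcl}(x)$'' denotes before $\mathsf{TCo}$ is proved; I would fix it as the existential over finite approximations, which agrees with $\mathsf{tcl}$ whenever the latter exists.
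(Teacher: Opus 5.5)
Your proposal is correct and is essentially the paper's proof. The witness is the same $\Delta_0$ object: the approximation from Lemma~\ref{Th:FiniteApproximationsOfTCl} cut off at the least stage containing $u$, with the clause $n=0 \lor u \notin f(n-1)$ forcing uniqueness. The differences are cosmetic: the paper leaves $n$ implicit in $\mathsf{dom}(f)$ instead of pairing it with $f$, and obtains the least stage from \textsf{Set-Foundation} (naturally applied to $\{k \in n+1 \mid u \in f(k)\}$ for a given approximation $f$) rather than from $\Pi_1$-\textsf{Foundation} over all stages.

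One caveat concerns your side remark. The observation that ``the union of the stages is transitive and contains $x$'' does not by itself show that this union equals $\mathsf{tcl}(x)$ whenever the latter exists. The union is not yet known to be a set, so the minimality of $\mathsf{tcl}(x)$ cannot be applied to it; the agreement only follows once Theorem~\ref{Th:TCoInReR} is proved under the stage-membership reading. This does not affect the lemma itself, since you, like the paper, take stage-membership as the meaning of $u \in \mathsf{tcl}(x)$.
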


\begin{proof}
Let $\phi(f, u, x)$ be the $\Delta_0$-formula
\[
\begin{array}{c}
(f \textrm{ is a function}) \land \mathsf{dom}(f)=n+1 \land (f(0)=x) \land (u \in f(n)) \land\\ 
(n=0 \lor u \notin f(n-1)) \land (\forall k \in n) \left(f(k+1)= f(k) \cup  \bigcup f(k) \right)
\end{array}.
\]
Now, for all $x$ and $u$, Lemma \ref{Th:FiniteApproximationsOfTCl} and \textsf{Set-Foundation} ensure that $u \in \mathsf{tcl}(x)$ if and only $\exists! f \phi(f, u, x)$. \hfill$\square$  
\end{proof}

\begin{lemma} \label{Th:TClSigmaComplexity}
($\mathsf{ReR}$) The formula is $y=\mathsf{tcl}(x)$ is expressible by a $\Sigma_{\mathsf{u}}$-formula. 
\end{lemma}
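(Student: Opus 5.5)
We need to express $y=\mathsf{tcl}(x)$ as $\exists! w\,\psi(w,x,y)$ with $\psi$ a $\Delta_0$-formula. The plan is to take as witness a pair $w=\langle f, g\rangle$. Here $g$ is a function with domain $y$ assigning to each $u\in y$ the unique approximation function from Lemma \ref{Th:TClMembershipComplexity}. That is, $g(u)$ is the unique function $h$ with domain $n+1$, $h(0)=x$, $h(k+1)=h(k)\cup\bigcup h(k)$, $u\in h(n)$, and either $n=0$ or $u\notin h(n-1)$. This witnesses that every element of $y$ lies in $\mathsf{tcl}(x)$.

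The remaining conditions on $y$ are all $\Delta_0$ in $x,y$: $x\subseteq y$, $y$ is transitive, and $(\forall u\in y)(u\in \bigcup\mathsf{rng}(\text{approximations}))$. So let $\psi(w,x,y)$ say the following.
\begin{itemize}
\item $w$ is a function with $\mathsf{dom}(w)=y$.
\item For each $u\in y$, $w(u)$ satisfies the $\Delta_0$-formula $\phi(w(u),u,x)$ from the proof of Lemma \ref{Th:TClMembershipComplexity}.
\item $x\subseteq y$ and $y$ is transitive.
\end{itemize}
The clause about $w(u)$ is $\Delta_0$ because the bounded quantifiers reach $w(u)$ through $\mathsf{rng}(w)$, using the usual substitutability of function application.

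For the forward direction, suppose $y=\mathsf{tcl}(x)$. Every $u\in y$ is in $\mathsf{tcl}(x)$, so by Lemma \ref{Th:TClMembershipComplexity} there is a unique $h$ with $\phi(h,u,x)$. Lemma \ref{Th:UniqueSigmaNormalisation}(II) then shows that $(\forall u\in y)\exists! h\,\phi(h,u,x)$ is equivalent to the existence of a unique function $w$ on $y$ choosing these witnesses. That lemma's proof uses $\Delta_0$-Replacement and Lemma \ref{Th:CartesianProduct} to build the graph. So $\exists! w\,\psi(w,x,y)$ holds.

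For the converse, suppose $\exists! w\,\psi(w,x,y)$. Then $y$ is transitive and contains $x$. Every $u\in y$ lies in $h(n)$ for some approximation $h$, and any transitive $z\supseteq x$ contains every $h(k)$, by induction on $k\le n$. That induction is a $\Delta_0$ induction over the finite domain, justified by Set-Foundation applied to the set of $k\le n$ with $h(k)\not\subseteq z$, which exists by $\Delta_0$-Separation. Hence $y\subseteq z$, so $y$ is minimal, i.e.\ $y=\mathsf{tcl}(x)$.

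The only delicate point is the forward direction. It must not presuppose that $\mathsf{tcl}(x)$ exists as a set: if it exists we simply take $y$ to be it. Uniqueness of $w$ comes from the uniqueness clause in $\phi$, namely the minimality of $n$ together with Lemma \ref{Th:FiniteApproximationsOfTCl}.
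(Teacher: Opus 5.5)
Your proposal is the paper's proof written out in full. The paper rewrites $y=\mathsf{tcl}(x)$ as ``$y$ is transitive, $x\subseteq y$ and $(\forall u\in y)(u\in\mathsf{tcl}(x))$'' and appeals to Lemmas \ref{Th:UniqueSigmaNormalisation}(II) and \ref{Th:TClMembershipComplexity}; your $\psi$ is exactly the witness function that Lemma \ref{Th:UniqueSigmaNormalisation}(II) produces, conjoined with the $\Delta_0$ clauses.

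One caveat, which applies equally to the paper. The step with real content is the forward-direction claim that every element of the $\subseteq$-least transitive superset of $x$ occurs at some finite stage, and you take it from Lemma \ref{Th:TClMembershipComplexity} just as the paper does. Your remark about not presupposing that $\mathsf{tcl}(x)$ exists does not address this step. To make it self-contained, run the $\Pi_1$-Foundation argument of Theorem \ref{Th:TCoInReR} with your right-hand side (transitive, contains $x$, all elements at finite stages) in place of $y=\mathsf{tcl}(x)$. This yields a transitive $y'\supseteq x$ consisting only of finite-stage elements, and minimality of $y$ then gives $y\subseteq y'$.
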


\begin{proof}
We have $y = \mathsf{tcl}(x)$ if and only if 
\[
(y \textrm{ is transitive}) \land (x \subseteq y) \land (\forall u \in y)(u \in \mathsf{tcl}(x)).
\]
Using Lemmas \ref{Th:UniqueSigmaNormalisation} and \ref{Th:TClMembershipComplexity}, this is equivalent to a $\Sigma_{\mathsf{u}}$-formula. \hfill$\square$
\end{proof}

This allows us to give a positive answer to \cite[Problem 2.107]{mat06}.

\begin{theorem} \label{Th:TCoInReR}
($\mathsf{ReR}$) For all $x$, $\mathsf{tcl}(x)$ exists.
\end{theorem}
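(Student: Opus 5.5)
The plan is to argue by contradiction using $\Pi_1\textsf{-Foundation}$, exploiting the fact that, by Lemma \ref{Th:TClSigmaComplexity}, ``$\mathsf{tcl}(x)$ exists'' is $\exists y(y=\mathsf{tcl}(x))$ with $y = \mathsf{tcl}(x)$ expressible as $\exists! w\,\psi(w,y,x)$ for some $\Delta_0$-formula $\psi$. Since the witness $y$ is unique (by the minimality clause in the definition of $\mathsf{tcl}$), the statement ``$\mathsf{tcl}(x)$ exists'' is itself equivalent to $\exists! y \exists! w\, \psi(w,y,x)$. By Lemma \ref{Th:UniqueSigmaNormalisation}(I) this is equivalent to a $\Sigma_{\mathsf{u}}$-formula. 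Its negation is therefore equivalent to a $\Pi_1$-formula, because $\neg\exists! v\,\theta$ is $\forall v\,\neg\theta \lor \exists u\exists v(\theta(u)\land\theta(v)\land u\neq v)$. I will need care here: the second disjunct is $\Sigma_1$, not $\Pi_1$. To avoid this, I will instead use $\neg\exists v\,\theta(v,x)$, where $\theta$ is the $\Delta_0$ matrix obtained from Lemma \ref{Th:UniqueSigmaNormalisation}(I). Uniqueness of $\theta$-witnesses is provable outright, since $\theta$ codes the pair $\langle y,w\rangle$, $y$ is unique, and $w$ is unique given $y$. So ``$\mathsf{tcl}(x)$ does not exist'' is equivalent to the $\Pi_1$-formula $\forall v\,\neg\theta(v,x)$.

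Suppose some $x$ has no transitive closure. By $\Pi_1\textsf{-Foundation}$, pick an $\in$-minimal such $x$. Then every $u \in x$ has a transitive closure $\mathsf{tcl}(u)$, and it is unique. The assertion $(\forall u\in x)\exists! y(y=\mathsf{tcl}(u))$ is an instance of the hypothesis of $\Sigma_{\mathsf{u}}\textsf{-Replacement}$, once $y=\mathsf{tcl}(u)$ is rewritten as $\exists! w\,\psi(w,y,u)$. Applying $\Sigma_{\mathsf{u}}\textsf{-Replacement}$ yields a set $c=\{\mathsf{tcl}(u) \mid u\in x\}$. I then set $t = x\cup\bigcup c$, which exists by \textsf{Pair} and \textsf{Union}.

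The remaining step is to check that $t=\mathsf{tcl}(x)$.
\begin{itemize}
\item \emph{$x \subseteq t$.} This is immediate from the definition of $t$.
\item \emph{Transitivity.} If $z\in t$ and $z\in x$, then every element of $z$ lies in $\mathsf{tcl}(z)\subseteq\bigcup c$. If instead $z\in\mathsf{tcl}(u)$ for some $u \in x$, then the elements of $z$ lie in $\mathsf{tcl}(u)$ by transitivity of $\mathsf{tcl}(u)$.
\item \emph{Minimality.} Let $s$ be transitive with $x\subseteq s$. For each $u\in x$ we have $u\subseteq s$, so $\mathsf{tcl}(u)\subseteq s$ by minimality of $\mathsf{tcl}(u)$. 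Hence $t\subseteq s$.
\end{itemize}
This contradicts the choice of $x$.

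The main obstacle I expect is the complexity bookkeeping in the first paragraph. Foundation is only available for $\Pi_1$ classes, so I must verify that the negated existence statement is genuinely $\Pi_1$, not merely the negation of a $\Sigma_{\mathsf{u}}$-formula. The key point is that uniqueness of the relevant witness is a theorem of $\mathsf{ReR}$, proved via Lemmas \ref{Th:FiniteApproximationsOfTCl} and \ref{Th:TClMembershipComplexity}. Granting that, $\neg\exists v\,\theta$ is $\Pi_1$, which is exactly what $\Pi_1\textsf{-Foundation}$ requires.

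If this route runs into trouble, I would fall back on an alternative: use Lemma \ref{Th:TClMembershipComplexity} together with $\Delta_0\textsf{-Replacement}$ on a function-coded version of the finite approximations of Lemma \ref{Th:FiniteApproximationsOfTCl}. However, I expect the $\in$-induction argument above to go through as written.
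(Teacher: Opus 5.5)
Your proposal is correct and is essentially the paper's proof. Both arguments take an $\in$-minimal counterexample by $\Pi_1$-\textsf{Foundation}, collect the sets $\mathsf{tcl}(u)$ for $u \in x$ by replacement on the pair-coded $\Delta_0$ matrix from Lemma \ref{Th:TClSigmaComplexity}, and conclude that $x \cup \bigcup c = \mathsf{tcl}(x)$. Your explicit checks add detail the paper omits: that the counterexample class is genuinely $\Pi_1$ (the $\Delta_0$ witnesses are provably unique, so $\neg\exists v\,\theta(v,x)$ suffices), and that $x \cup \bigcup c$ is transitive and minimal.
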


\begin{proof}
Suppose that the theorem is false. Using Lemma \ref{Th:TClSigmaComplexity}, let $\phi(u, x, y)$ be a $\Delta_0$-formula such that $\exists! u \phi(u, x, y)$ expresses $y= \mathsf{tcl}(x)$. Using $\Pi_1\textsf{-Foundation}$, let $b$ be an $\in$-minimal element of the class $\{ x \mid \neg \exists y(y= \mathsf{tcl}(x))\}$. So,
\[
(\forall x \in b)\exists! w (\mathsf{OP}(w) \land \phi(\mathsf{fst}(w), x, \mathsf{snd}(w))).
\]
Therefore, using $\Delta_0\textsf{-Replacement}$, let $c$ be such that 
\[
\forall w( w \in c \iff \mathsf{OP}(w) \land \phi(\mathsf{fst}(w), x, \mathsf{snd}(w))).
\]
Now, $\mathsf{tcl}(b)= b \cup \bigcup \mathsf{rng}(c)$, which is a contradiction. \hfill$\square$ 
\end{proof}

\begin{corollary} \label{Th:ComplexityOfTCl}
The formula $y= \mathsf{tcl}(x)$ is $\Delta_{\mathsf{u}}^{\mathsf{ReR}}$.
\end{corollary}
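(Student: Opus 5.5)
By Lemma \ref{Th:TClSigmaComplexity}, ReR already proves that $y = \mathsf{tcl}(x)$ is equivalent to a $\Sigma_{\mathsf{u}}$-formula. So what remains is to show that $\neg(y = \mathsf{tcl}(x))$ is also equivalent in ReR to a $\Sigma_{\mathsf{u}}$-formula.

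The plan is to use Theorem \ref{Th:TCoInReR}, which gives a unique transitive closure for each $x$. In ReR, $y \neq \mathsf{tcl}(x)$ holds if and only if there is a (necessarily unique) $t$ with $t = \mathsf{tcl}(x)$ and $t \neq y$. Take the $\Delta_0$-formula $\phi(u,x,t)$ supplied by Lemma \ref{Th:TClSigmaComplexity}, so that $\exists! u\, \phi(u,x,t)$ expresses $t = \mathsf{tcl}(x)$. Then $\neg(y = \mathsf{tcl}(x))$ is equivalent to
\[
\exists! t\, \exists! u\, (\phi(u,x,t) \land t \neq y).
\]

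Existence of such a $t$ and $u$ comes from Theorem \ref{Th:TCoInReR}. Uniqueness of $t$ follows because $\mathsf{tcl}(x)$ is unique, by the minimality clause in its definition together with extensionality. Uniqueness of $u$ for the given $t$ comes from the $\exists!$ in the representation. The matrix $\phi(u,x,t) \land t \neq y$ is $\Delta_0$, so Lemma \ref{Th:UniqueSigmaNormalisation}(I) converts the formula into a single $\Sigma_{\mathsf{u}}$-formula.

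The step needing care is the equivalence between $\exists! t\,\exists! u\,(\cdots)$ and "$\exists t\,(t = \mathsf{tcl}(x) \land t \neq y)$", since $\exists! t\,\exists! u$ requires uniqueness of $t$ among those $t$ admitting a unique witness $u$. I would settle this by observing that, for any $t$, $\exists! u\,\phi(u,x,t)$ holds exactly when $t = \mathsf{tcl}(x)$, and then invoking uniqueness of $\mathsf{tcl}(x)$. Once this is checked, both $y = \mathsf{tcl}(x)$ and its negation are ReR-provably equivalent to $\Sigma_{\mathsf{u}}$-formulae, and so $y = \mathsf{tcl}(x)$ is $\Delta_{\mathsf{u}}^{\mathsf{ReR}}$.
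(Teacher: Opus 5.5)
Your argument is essentially the paper's: applying Lemma \ref{Th:UniqueSigmaNormalisation}(I) to your formula $\exists! t\,\exists! u\,(\phi(u,x,t)\land t\neq y)$ yields exactly the pair-coded formula $\exists! w\,(\mathsf{OP}(w)\land\phi(\mathsf{snd}(w),x,\mathsf{fst}(w))\land\mathsf{fst}(w)\neq y)$ that the paper writes down directly, with Theorem \ref{Th:TCoInReR} supplying existence of $\mathsf{tcl}(x)$. One caution: that formula counts pairs $(t,u)$, so its equivalence with $y\neq\mathsf{tcl}(x)$ needs $\phi(u,x,t)$ to have at most one witness $u$ for \emph{every} $t$, not merely $\exists! u$ exactly when $t=\mathsf{tcl}(x)$; this holds for the specific $\phi$ built in Lemmas \ref{Th:TClMembershipComplexity} and \ref{Th:TClSigmaComplexity}, because the finite-approximation function witnessing $u\in\mathsf{tcl}(x)$ is unique whenever it exists, and it is exactly the condition under which the pair coding of Lemma \ref{Th:UniqueSigmaNormalisation}(I) is valid.
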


\begin{proof}
Lemma \ref{Th:TClSigmaComplexity} shows that $y= \mathsf{tcl}(x)$ is $\Sigma_{\mathsf{u}}$. To see that $y \neq \mathsf{tcl}(x)$ is $\Sigma_{\mathsf{u}}$, let $\phi(u, x, y)$ be a $\Delta_0$-formula such that $\exists u! \phi(u, x, y)$ is equivalent to $y= \mathsf{tcl}(x)$ in $\mathsf{ReR}$. Now, $y \neq \mathsf{tcl}(x)$ if and only if
\[
\exists! w(\mathsf{OP}(w) \land \phi(\mathsf{fst}(w), x, \mathsf{snd}(w)) \land \mathsf{snd}(w) \neq y).
\] 
\hfill$\square$
\end{proof}

Using Theorem \ref{Th:TCoInReR} we can show that the theory $\mathsf{ReR}^{\neg \infty}$ is able to define the inverse Ackermann interpretation (see \cite[\S 6]{kw07}) that describes a bijection between the sets and the class of natural numbers. A consequence of this is that $\mathsf{ReR}^{\neg \infty}$ and $\mathsf{KP}^{\neg \infty}$ have the same consequences. 

As noted in \cite[Theorem 5]{kw07}, the negation of the axiom of infinity immediately implies that every ordinal is a successor ordinal.

\begin{lemma} \label{Th:EveryOrdinalASuccessor}
($\mathsf{ReR}^{\neg \infty}$) For all ordinals $\alpha \neq 0$, there exists $\beta \in \alpha$ such that $\alpha= \beta \cup \{\beta\}$. \hfill$\square$
\end{lemma}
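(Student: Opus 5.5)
We need to prove, in $\mathsf{ReR}^{\neg \infty}$, that every nonzero ordinal $\alpha$ is a successor. The plan is to argue by contradiction, assuming $\alpha \neq 0$ is an ordinal that is not of the form $\beta \cup \{\beta\}$ for any $\beta \in \alpha$, and then to show that $\omega$ exists and lies inside $\alpha$. That would give a superset of $\omega$, contradicting $\neg\textsf{Infinity}$. The notions ``$x$ is an ordinal'' (a transitive set linearly ordered by $\in$), ``$x$ is a successor ordinal'' and ``$x$ is a natural number'' are all $\Delta_0$, so every class in the argument is $\Delta_0$-definable.

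The first step is to show that every natural number belongs to $\alpha$. Suppose some natural number $n$ is not in $\alpha$. Since $n \notin \alpha$ is $\Delta_0$, $\Pi_1\textsf{-Foundation}$ (indeed $\Delta_0$-foundation) gives an $\in$-least natural number $n$ with $n \notin \alpha$.
\begin{itemize}
\item[] $n \neq 0$: because $\alpha \neq 0$ is an ordinal, its $\in$-minimal element (from \textsf{Set-Foundation}, part of $\mathsf{ReR}$) is $\emptyset$, so $0 \in \alpha$.
\item[] Hence $n = m+1$ with $m \in \alpha$. By linearity of $\in$ on ordinals, $m+1 \in \alpha$, $m+1 = \alpha$, or $\alpha \in m+1$.
\item[] $m+1 = \alpha$ contradicts the assumption that $\alpha$ is not a successor.
\item[] $\alpha \in m+1$ means $\alpha \in m$ or $\alpha = m$. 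Either way this clashes with $m \in \alpha$ by foundation, since it would produce $\alpha \in \alpha$ by transitivity.
\end{itemize}
So $m+1 \in \alpha$, contradicting the choice of $n$. Linearity of $\in$ between two ordinals needs proof here, since the standard argument uses foundation on $\Delta_0$ classes; this is available via $\Pi_1\textsf{-Foundation}$, and I would cite the usual proof (e.g. from Barwise) as going through.

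The second step forms $\omega = \{ x \in \alpha \mid x \textrm{ is a natural number}\}$ by $\Delta_0\textsf{-Separation}$, which $\mathsf{ReR}_0$ proves. This is a set of which every natural number is a member, so it witnesses $\textsf{Infinity}$, contradicting $\neg\textsf{Infinity}$.

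The main obstacle is definitional. It depends on exactly how the paper formalises $\textsf{Infinity}$ ("a superset of $\omega$ exists") and "natural number". If "natural number" is defined as "an ordinal every element of which is zero or a successor, and which is itself zero or a successor", then the set just constructed is precisely such a superset. It remains only to check that each natural number in $\alpha$ is a natural number in that sense, which is immediate since the notion is absolute. If instead $\textsf{Infinity}$ asserts the existence of a nonzero limit ordinal, the lemma is immediate, because $\alpha$ itself is such an ordinal.
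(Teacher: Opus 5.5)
Your proof is correct. It is essentially the paper's approach: the paper gives no proof and only remarks, citing Kaye–Wong, that $\neg\textsf{Infinity}$ immediately forces every nonzero ordinal to be a successor, and your argument unpacks that remark, since a nonzero limit ordinal $\alpha$ would contain every natural number and so witness $\textsf{Infinity}$. Two simplifications are available. The separation step is unnecessary, because $\alpha$ itself is already a superset of $\omega$. And in your first step, \textsf{Set-Foundation} applied to the set $(n \cup \{n\}) \setminus \alpha$ would do the job you use $\Pi_1\textsf{-Foundation}$ for.
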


A consequence of this result is that any set of ordinals has a maximal element \cite[Corollary 6]{kw07}.  

\begin{lemma} \label{Th:EverySetOfOrdinalsHasMaximum}
($\mathsf{ReR}^{\neg \infty}$) If $x$ is a nonempty set of ordinals, then $\bigcup x \in x$. \hfill$\square$
\end{lemma}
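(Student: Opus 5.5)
The plan is to show that $\bigcup x$ is itself an ordinal, then use Lemma \ref{Th:EveryOrdinalASuccessor} to find a largest element of $x$, and finally check that this largest element equals $\bigcup x$.

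Let $x$ be a nonempty set of ordinals and put $\gamma = \bigcup x$, which exists by \textsf{Union}. The first step is the standard observation that $\gamma$ is an ordinal. It is transitive, since every element of $\gamma$ lies in some transitive $\alpha \in x$. It is a set of ordinals, hence linearly ordered by $\in$ and well-founded by \textsf{Set-Foundation}. All of this is $\Delta_0$ bookkeeping and needs no more than $\mathsf{S}_0$ together with foundation for sets.

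The key step is to show $\gamma \in x$. I argue by contradiction and suppose $\gamma \notin x$.
\begin{itemize}
\item Every $\alpha \in x$ satisfies $\alpha \subseteq \gamma$, and $\alpha \neq \gamma$ by assumption. Comparability of ordinals then gives $\alpha \in \gamma$, so $x \subseteq \gamma$.
\item Since $x$ is nonempty, $\gamma \neq 0$: if $\gamma$ were $0$, then every $\alpha \in x$ would be $0$, so $0 = \gamma \in x$.
\item Lemma \ref{Th:EveryOrdinalASuccessor} now yields $\beta \in \gamma$ with $\gamma = \beta \cup \{\beta\}$.
\item Because $\beta \in \gamma = \bigcup x$, there is $\alpha \in x$ with $\beta \in \alpha$.
\item Since $\alpha$ is an ordinal, $\beta \in \alpha$ gives $\beta \cup \{\beta\} \subseteq \alpha$, so $\gamma \subseteq \alpha \subseteq \gamma$.
\item Hence $\alpha = \gamma$, so $\gamma \in x$, contradicting the assumption.
\end{itemize}

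The only real content is the appeal to Lemma \ref{Th:EveryOrdinalASuccessor}, where $\neg\textsf{Infinity}$ enters. This is essentially the argument of \cite[Corollary 6]{kw07}. The remaining care is confirming that the ordinal facts used (trichotomy, and $\beta \in \alpha$ implying $\beta \cup \{\beta\} \subseteq \alpha$) are available in $\mathsf{ReR}^{\neg\infty}$. Trichotomy for ordinals is normally proved by a minimal-counterexample argument. I will apply $\Pi_1$-\textsf{Foundation}, or in fact $\Delta_0$-\textsf{Foundation}, to the class of $\alpha$ for which some $\beta$ is incomparable with $\alpha$. By $\Delta_0$-\textsf{Separation} this class can even be cut down to a set inside $\alpha \cup \beta$, so set foundation suffices. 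I expect this verification to be the main (though routine) obstacle.
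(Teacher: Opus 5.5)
Your proof is correct and is essentially the route the paper intends: the paper gives no argument of its own, presenting the lemma (following Kaye--Wong, Corollary 6) as an immediate consequence of Lemma~\ref{Th:EveryOrdinalASuccessor}, and your contradiction via $\bigcup x = \beta \cup \{\beta\}$ with $\beta \in \alpha \in x$ is exactly that derivation. One small remark: your first bullet ($x \subseteq \gamma$) is never used later, so trichotomy is not actually needed. The argument only requires that $\gamma$ is an ordinal, which is immediate if ordinals are read as transitive sets of transitive sets, and that $\alpha$ is transitive.
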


In order to discuss the inverse Ackermann interpretation we need to recall the interpretation of arithmetic in the finite ordinals in set theory. Let $\Psi_+(f, x, y, z)$ be the $\Delta_0$-formula
\[
\begin{array}{c}
(x, y \textrm{ and } z \textrm{ are natural numbers}) \land (f \textrm{ is a bijective function}) \land\\ 
(\mathsf{dom}(f)= x \times \{0\} \cup y \times \{1\}) \land (\mathsf{rng}(f)=z) \land\\
(\forall u, v \in \mathsf{dom}(f))\left( \begin{array}{c}
f(u) \in f(v) \iff\\
(\mathsf{snd}(u) \in \mathsf{snd}(v)) \lor (\mathsf{snd}(u)=\mathsf{snd}(v) \land \mathsf{fst}(u) \in \mathsf{fst}(v)) 
\end{array}\right)
\end{array}.
\]
Let $\Psi_{\cdot}(f, x, y, z)$ be the $\Delta_0$-formula
\[
\begin{array}{c}
(x, y \textrm{ and } z \textrm{ are natural numbers}) \land (f \textrm{ is a bijective function}) \land\\ 
(\mathsf{dom}(f)= x \times y) \land (\mathsf{rng}(f)= z) \land\\
(\forall u, v \in \mathsf{dom}(f))\left( \begin{array}{c}
f(u) \in f(v) \iff\\
(\mathsf{snd}(u) \in \mathsf{snd}(v)) \lor (\mathsf{snd}(u)=\mathsf{snd}(v) \land \mathsf{fst}(u) \in \mathsf{fst}(v)) 
\end{array}\right)
\end{array}.
\]
For all natural number $x$, $y$ and $z$, define
\[
x+y=z \textrm{ if and only if } \exists f \Psi_+(f, x, y, z) \textrm{ and}
\]
\[
x\cdot y=z \textrm{ if and only if } \exists f \Psi_{\cdot}(f, x, y, z).
\]

A straightforward induction argument yields:

\begin{lemma} \label{Th:UniquenessOfArithmeticAttempts}
($\mathsf{ReR}$) Let $x$, $y$ and $z$ be natural numbers. 
\begin{itemize}
\item[(I)] If $f$, $g$, $z_0$ and $z_1$ are such that $\Psi_+(f, x, y, z_0)$ and $\Psi_+(g, x, y, z_1)$, then $f=g$ and $z_0=z_1$.
\item[(II)] If $f$, $g$, $z_0$ and $z_1$ are such that $\Psi_{\cdot}(f, x, y, z_0)$ and $\Psi_{\cdot}(g, x, y, z_1)$, then $f=g$. 
\end{itemize}
\hfill$\square$
\end{lemma}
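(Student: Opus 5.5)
The plan is to prove (I) and (II) by induction on $y$, and to run that induction as an application of $\Pi_1\textsf{-Foundation}$, as in Lemma \ref{Th:FiniteApproximationsOfTCl}. I fix the natural number $x$ and work with the $\Pi_1$-class of natural numbers $y$ for which uniqueness fails. For (I), this is the class of $y$ that are not natural numbers or for which
\[
\exists f \exists g \exists z_0 \exists z_1 (\Psi_+(f,x,y,z_0) \land \Psi_+(g,x,y,z_1) \land (f \neq g \lor z_0 \neq z_1)).
\]
This is $\Sigma_1$, so its complement is $\Pi_1$. Instead of taking the least counterexample directly, I take an $\in$-minimal element of the class of natural numbers $y$ such that $\forall f \forall g \forall z_0 \forall z_1$ the implication of the lemma holds, and argue that this class is empty. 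So I apply foundation to the $\Sigma_1$-definable class of failures, made $\Pi_1$-usable as in the earlier lemmas; equivalently, I derive a contradiction from a minimal failure. I expect this complexity bookkeeping to need care but no new ideas.

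The base case is $y=0$. Then $\mathsf{dom}(f)=x\times\{0\}$, and the order condition says that $f$ is an order isomorphism between $(x\times\{0\},\in \text{ on first coordinates})$ and $(z_0,\in)$. The key sublemma is that an $\in$-isomorphism between natural numbers is unique and forces equal targets. Concretely, if $h$ is an order isomorphism from $(x,\in)$ onto $(z,\in)$ with $x,z$ natural numbers, then $z=x$ and $h$ is the identity. I prove this by $\Delta_0$ reasoning plus foundation: take the $\in$-least $k\in x$ with $h(k)\neq k$. It exists by \textsf{Set-Foundation} applied to the subset $\{k\in x \mid h(k)\neq k\}$, which is a set by $\Delta_0\textsf{-Separation}$. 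Then $h(k)=\{h(j)\mid j\in k\}=k$, using that $h$ is onto $z$ and order preserving and that $z$ is transitive and linearly ordered by $\in$. This is a contradiction. Surjectivity onto $z$ then gives $z=x$. This sublemma in fact settles the general case too, without induction on $y$. The domain $x\times\{0\}\cup y\times\{1\}$ with the lexicographic order is a fixed well-ordered set $D$, and any $f$ satisfying $\Psi_+$ is an isomorphism from $(D,<_{\mathrm{lex}})$ onto $(z,\in)$. Given two such, $g\circ f^{-1}$ is an $\in$-isomorphism between natural numbers $z_0$ and $z_1$, hence the identity, so $z_0=z_1$ and $f=g$. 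The same argument with domain $x\times y$ gives (II), and indeed also $z_0=z_1$ there.

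So the concrete route is as follows. (1) Prove the rigidity sublemma for isomorphisms between natural numbers, using $\Delta_0\textsf{-Separation}$ and \textsf{Set-Foundation}; the only facts about natural numbers needed are transitivity and linearity of $\in$, which are $\Delta_0$. (2) Observe that $g\circ f^{-1}$ exists as a set; it is the $\Delta_0$-replacement image of $z_0$ under $u\mapsto g(f^{-1}(u))$. Check that it is an $\in$-isomorphism from $z_0$ onto $z_1$, since both $f$ and $g$ respect the same lexicographic order on the common domain. (3) Conclude equality of targets and of maps. The main obstacle is step (1): one has to check carefully that the minimal-counterexample argument needs only set foundation, so that no $\Pi_1\textsf{-Foundation}$ beyond the axioms of $\mathsf{ReR}$ is invoked, and that surjectivity of $h$ gives $h(k)\supseteq k$. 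For the latter, take $m\in k\subseteq z$. Then $m=h(j)$ for some $j$, and $j\in k$, because otherwise $k\in j$ or $k=j$ would give $h(k)\in m$ or $h(k)=m$, each contradicting $m\in k$ by linearity and foundation.
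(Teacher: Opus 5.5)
Your concrete route, steps (1)--(3), is correct. It differs from what the paper indicates. The paper says only that a straightforward induction yields the lemma, most naturally an induction on $y$ that follows the recursive shape of $+$ and $\cdot$.

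You instead fix the two attempts $f,g$ and form $g\circ f^{-1}$ as a set. This is legitimate by $\Delta_0\textsf{-Replacement}$ applied to $a\mapsto\langle a, g(f^{-1}(a))\rangle$ on $z_0$, since the defining formula is $\Delta_0$ in $f,g$. You note that $g\circ f^{-1}$ is a bijection of $z_0$ onto $z_1$ with $a\in b\iff g(f^{-1}(a))\in g(f^{-1}(b))$, because $f$ and $g$ respect the same relation on the same domain. You then apply rigidity of $\in$-isomorphisms between transitive sets.

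This approach handles (I) and (II) uniformly and also gives $z_0=z_1$ in (II), which the paper does not state. It uses only $\Delta_0\textsf{-Separation}$, $\Delta_0\textsf{-Replacement}$, and $\textsf{Set-Foundation}$ applied to a subset of $z_0$. No class foundation is needed, so the complexity bookkeeping you anticipated disappears. An induction on $y$ is only this cheap if you likewise fix $f,g$ as parameters and induct inside $y+1$ on restrictions. Quantifying over all attempts would instead require foundation for a $\Sigma_1$ class of failures, which is not an axiom of $\mathsf{ReR}$.

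Two clean-ups:

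\emph{Drop your opening paragraph.} $\mathsf{ReR}$ gives you $\in$-minimal elements of $\Pi_1$ classes, not of the $\Sigma_1$ class of failures. Taking a minimal element of the class of successes, as written, proves nothing. Your concrete route makes all of this moot.

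\emph{Fix the roles of the hypotheses in the rigidity sublemma.} Surjectivity and transitivity of $z$ are what give $h(k)\subseteq\{h(j)\mid j\in k\}$. The inclusion $k\subseteq h(k)$ is immediate from minimality: for $m\in k$, $m=h(m)\in h(k)$. Your roundabout argument for the latter silently uses minimality of $k$ twice: to get $k\subseteq z$, and to refute $h(k)\in k$ (minimality gives $h(h(k))=h(k)$, hence $h(k)\in h(k)$). It does not follow from linearity and foundation alone. Linearity of $\in$ on natural numbers is in fact never needed, only their transitivity.
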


\begin{lemma} \label{Th:UniquenessOfArithmeticOperations}
($\mathsf{ReR}$) For all natural numbers $k$ and $m$,
\begin{itemize}
\item[(I)] $\exists! n (n= k+m)$;
\item[(II)] $\exists! n(n=k\cdot m)$.
\end{itemize}
\hfill$\square$
\end{lemma}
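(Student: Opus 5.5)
Uniqueness comes for free from Lemma \ref{Th:UniquenessOfArithmeticAttempts}: if $\Psi_+(f,k,m,n_0)$ and $\Psi_+(g,k,m,n_1)$, then part (I) gives $n_0=n_1$. For multiplication, part (II) gives $f=g$, and then $n_0=\mathsf{rng}(f)=\mathsf{rng}(g)=n_1$. So the work is existence. Fix $k$ and prove $\forall m\,\exists n\,\exists f\,\Psi_+(f,k,m,n)$ by $\Pi_1$-Foundation on the natural numbers. The plan is to mirror Lemma \ref{Th:FiniteApproximationsOfTCl}. The class $\{m \mid m \textrm{ is a natural number} \land \neg\exists n\exists f\,\Psi_+(f,k,m,n)\}$ is $\Pi_1$. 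If it is nonempty, take an $\in$-minimal element $m$.

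For $m=0$, take $n=k$ and let $f$ be the map $\langle u,0\rangle\mapsto u$ on $k\times\{0\}$. This $f$ exists by $\Delta_0$-Replacement, or equivalently as the inverse of $\{0\}\times k$, obtained from Lemma \ref{Th:CartesianProductOfSingleton} and Replacement. Otherwise $m=m'+1$, and minimality gives $f'$ and $n'$ with $\Psi_+(f',k,m',n')$. Put $n=n'\cup\{n'\}$ and $f=f'\cup\{\langle\langle m',1\rangle,n'\rangle\}$. This uses only Pair and Union, and the domain is correct because $m\times\{1\}=m'\times\{1\}\cup\{\langle m',1\rangle\}$. Checking that $f$ is a bijection onto $n$ preserving the lexicographic order is a $\Delta_0$ verification: the new domain element is lexicographically above every old one, and $n'$ is above every element of $n'$. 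This contradicts the choice of $m$.

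For multiplication, run the same induction on $m$. The case $m=0$ uses $n=0$ and $f=\emptyset$. For $m=m'+1$, minimality gives $g$ and $n'$ with $\Psi_\cdot(g,k,m',n')$. Part (I) gives $n=n'+k$ together with a witness $h$ satisfying $\Psi_+(h,n',k,n)$. Define $f$ on $k\times m=k\times m'\cup k\times\{m'\}$ by $f\restriction(k\times m')=g$ and $f(\langle u,m'\rangle)=h(\langle u,1\rangle)$. Forming $f$ requires $\Delta_0$-Replacement applied to $k\times m$, which exists by Lemma \ref{Th:CartesianProduct}. The defining condition is $\Delta_0$ in the parameters $g$ and $h$, and for each pair it picks out a unique value. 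On $k\times m'$ we have $g(\langle u,v\rangle)=h(\langle g(\langle u,v\rangle),0\rangle)$, since $h$ restricted to $n'\times\{0\}$ is the identity. This is itself a small induction, or follows from $h$ being order preserving with range an ordinal. Hence $f$ agrees with $h$ composed with the obvious order-isomorphism, and the order clause of $\Psi_\cdot$ holds.

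The main obstacle is this last verification, that the glued $f$ is a bijection onto $n$ with the required order property. The cleanest route is to prove first an auxiliary lemma, again by $\Pi_1$-Foundation, that any order-preserving bijection from a set of the form $\alpha\times\{0\}$, with $\alpha$ an ordinal, onto an initial segment of a natural number is $\langle u,0\rangle\mapsto u$. Everything else reduces to $\Delta_0$ checks with explicitly built sets. Each inductive hypothesis is $\Sigma_1$, so the failure class is $\Pi_1$ and $\Pi_1$-Foundation applies.
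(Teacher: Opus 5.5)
Your proof is correct and follows the route the paper intends: the paper gives no proof of this lemma and points only to a straightforward induction. You take uniqueness from Lemma \ref{Th:UniquenessOfArithmeticAttempts} and get existence from an $\in$-minimal failing $m$ via $\Pi_1\textsf{-Foundation}$, exactly as in Lemmas \ref{Th:FiniteApproximationsOfTCl} and \ref{Th:PowersetsOfNumbersExist}. One harmless slip: the inverse of $\{0\}\times k$ is $k\times\{0\}$, the constant map $u\mapsto 0$, not $\langle u,0\rangle\mapsto u$, but your primary construction of that map by $\Delta_0\textsf{-Replacement}$ is fine (and the identity fact for $h$ on $n'\times\{0\}$ needs only $\textsf{Set-Foundation}$ applied to a $\Delta_0$-separated subset of $n'$).
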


This shows that addition and multiplication are $\Delta_{\mathsf{u}}^{\mathsf{ReR}}$.

\begin{lemma} \label{Th:ComplexityOfArithmetic}
The formulae $z= x+y$ and $z=x\cdot y$ are $\Delta_{\mathsf{u}}^{\mathsf{ReR}}$.      
\end{lemma}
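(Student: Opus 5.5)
We need to show that $z=x+y$ is $\Delta_{\mathsf{u}}^{\mathsf{ReR}}$; the case of $z=x\cdot y$ is identical with $\Psi_{\cdot}$ in place of $\Psi_+$. The plan is to exhibit, for each of the formula and its negation, a $\Delta_0$ matrix whose witness is unique. The uniqueness comes from Lemma \ref{Th:UniquenessOfArithmeticAttempts}, and the existence of a witness comes from Lemma \ref{Th:UniquenessOfArithmeticOperations}. Throughout, I read $z=x+y$ as including the clause that $x,y,z$ are natural numbers, which is $\Delta_0$.

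\emph{The $\Sigma_{\mathsf{u}}$ form of $z=x+y$.} I claim that $z=x+y$ is equivalent in $\mathsf{ReR}$ to $\exists! f\,\Psi_+(f,x,y,z)$.
\begin{itemize}
\item If $z=x+y$, then some $f$ satisfies $\Psi_+(f,x,y,z)$. By Lemma \ref{Th:UniquenessOfArithmeticAttempts}(I), any two such $f$ coincide, so the witness is unique.
\item Conversely, $\exists! f\,\Psi_+(f,x,y,z)$ trivially implies $\exists f\,\Psi_+(f,x,y,z)$, which is $z=x+y$.
\end{itemize}
For multiplication, Lemma \ref{Th:UniquenessOfArithmeticAttempts}(II) only gives $f=g$. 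This suffices, because $z=\mathsf{rng}(f)$ is already determined by $f$.

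\emph{The $\Sigma_{\mathsf{u}}$ form of $z\neq x+y$.} Following Corollary \ref{Th:ComplexityOfTCl}, I take the formula
\[
\neg(x,y,z \textrm{ are natural numbers}) \lor \exists! w\bigl(\mathsf{OP}(w) \land \Psi_+(\mathsf{fst}(w),x,y,\mathsf{snd}(w)) \land \mathsf{snd}(w)\neq z\bigr).
\]
The first disjunct is $\Delta_0$, and a $\Delta_0$ formula $\theta$ can be written as $\exists! w(w=\emptyset\land\theta)$. Since $\mathsf{fst}$ and $\mathsf{snd}$ are substitutable, the matrix is $\Delta_0$.

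I then check that this formula is equivalent to $z\neq x+y$ when $x,y,z$ are natural numbers.
\begin{itemize}
\item Suppose $z\neq x+y$. By Lemma \ref{Th:UniquenessOfArithmeticOperations}(I), let $n=x+y$, so $\Psi_+(f,x,y,n)$ holds for some $f$, and $n\neq z$. By Lemma \ref{Th:UniquenessOfArithmeticAttempts}(I), any $w$ satisfying the matrix is uniquely determined, so $\langle f,n\rangle$ is the unique such $w$.
\item Conversely, suppose such a $w$ exists. Then $\mathsf{snd}(w)=x+y$ and $\mathsf{snd}(w)\ne z$. Hence $z\neq x+y$, again using Lemma \ref{Th:UniquenessOfArithmeticAttempts}(I).
\end{itemize}

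\emph{Combining the disjuncts.} The disjunction of two $\Sigma_{\mathsf{u}}$-formulae must itself be $\Sigma_{\mathsf{u}}$. I handle this by folding the two cases into a single $\exists! w$ with matrix
\[
(\neg\textrm{nat}\land w=\emptyset)\lor(\textrm{nat}\land\ldots).
\]
Exactly one branch can apply, so uniqueness is preserved. Alternatively, one can use Lemma \ref{Th:ClosurePropertiesDeltaU}, since the natural-number clause is $\Delta_0$ and so trivially $\Delta_{\mathsf{u}}$.

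The main obstacle is the existence step for the negation: we need $x+y$ to exist in order to produce a witness. This is exactly what Lemma \ref{Th:UniquenessOfArithmeticOperations} supplies, so the remaining steps are routine bookkeeping.
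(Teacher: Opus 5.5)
Your proof is correct and is essentially the paper's argument. The positive form is $\exists! f\,\Psi_+(f,x,y,z)$ via Lemma \ref{Th:UniquenessOfArithmeticAttempts}, and the negation is the same folded disjunction $(\neg\textrm{nat}\land\textrm{witness}=\emptyset)\lor(\Psi_+\land\textrm{value}\neq z)$, with existence supplied by Lemma \ref{Th:UniquenessOfArithmeticOperations}. You write out the pairing $w=\langle f,n\rangle$ explicitly, where the paper writes $\exists! f\,\exists! w$ and cites Lemma \ref{Th:UniqueSigmaNormalisation}(I). Drop the aside about using Lemma \ref{Th:ClosurePropertiesDeltaU} instead: that lemma needs the second disjunct to be $\Delta_{\mathsf{u}}$ already, and showing its negation $\neg\textrm{nat}\lor z=x+y$ is $\Sigma_{\mathsf{u}}$ needs the same folding trick anyway.
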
 

\begin{proof}
The fact that the formulae $z=x+y$ and $z= x \cdot y$ are $\Sigma_{\mathsf{u}}$ follows immediately from Lemma \ref{Th:UniquenessOfArithmeticAttempts}. Now, for all $x$, $y$, $z$, $\neg(z= x+y)$ if and only if
\[
\exists! f \exists! w ((\neg (x, y \textrm{ and } z \textrm{ are natural numbers}) \land f=w=\emptyset)\lor (\Psi_+(f, x, y, w) \land w \neq z)),
\]
which, by Lemma \ref{Th:UniqueSigmaNormalisation}, is equivalent to a $\Sigma_{\mathsf{u}}$-formula. Similarly, for all $x$, $y$ and $z$, $\neg(z=x\cdot y)$ if and only if
\[
\exists! f \exists! w ((\neg (x, y \textrm{ and } z \textrm{ are natural numbers}) \land f=w=\emptyset)\lor (\Psi_\cdot(f, x, y, w) \land w \neq z)),
\]
which, by Lemma \ref{Th:UniqueSigmaNormalisation}, is equivalent to a $\Sigma_{\mathsf{u}}$-formula. \hfill$\square$
\end{proof}

Equipped with addition and multiplication, we can now define exponentiation. Let $\Psi_{\mathsf{exp}}(f, x, y)$ be the formula
\[
\begin{array}{c}
(x \textrm{ and } y \textrm{ are natural numbers}) \land (f \textrm{ is a function}) \land \mathsf{dom}(f)=x+1 \land\\
f(0)=1 \land (\forall k \in x)(f(k+1)= 2\cdot f(k)) \land f(x)=y
\end{array}. 
\]
For all natural numbers $x$ and $y$, define
\[
y= 2^x \textrm{ if and only if } \exists f \Psi_{\mathsf{exp}}(f, x, y).
\]
Lemma \ref{Th:ClosurePropertiesDeltaU} and similar arguments to those used to obtain Lemmas \ref{Th:UniquenessOfArithmeticAttempts} and \ref{Th:ComplexityOfArithmetic} yields:

\begin{lemma} \label{Th:ComplexityOfPowers}
The formula $y= 2^x$ is $\Delta_{\mathsf{u}}^{\mathsf{ReR}}$. \hfill$\square$ 
\end{lemma}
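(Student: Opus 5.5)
We need to show that $y=2^x$ and its negation are both expressible, provably in $\mathsf{ReR}$, by $\Sigma_{\mathsf{u}}$-formulae. The plan mirrors Lemmas \ref{Th:UniquenessOfArithmeticAttempts}, \ref{Th:UniquenessOfArithmeticOperations} and \ref{Th:ComplexityOfArithmetic}. The first step is to observe that $\Psi_{\mathsf{exp}}(f,x,y)$ is not literally $\Delta_0$, because it mentions $x+1$, the constant $1$, $k+1$ and $2\cdot f(k)$. Here $x+1=x\cup\{x\}$ is $\Delta_0$, and $1$, $2$ are $\Delta_0$-definable. The clause $(\forall k\in x)(f(k+1)=2\cdot f(k))$ is $(\forall k\in x)\phi(k,f)$, where $\phi$ asserts $z=2\cdot w$ for $z=f(k\cup\{k\})$ and $w=f(k)$. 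Since $f$ is a function, these values are $\Delta_0$-accessible: one writes $(\exists p,q\in f)$ with appropriate $\mathsf{fst}$ and $\mathsf{snd}$ conditions. By Lemma \ref{Th:ComplexityOfArithmetic}, $z=2\cdot w$ is $\Delta_{\mathsf{u}}^{\mathsf{ReR}}$, and a bounded substitution of $\Delta_0$-accessible values preserves this by the same normalisation as Lemma \ref{Th:UniqueSigmaNormalisation}. Then Lemma \ref{Th:ClosurePropertiesDeltaU} (conjunction, negation and bounded universal quantification) shows that $\Psi_{\mathsf{exp}}(f,x,y)$ is $\Delta_{\mathsf{u}}^{\mathsf{ReR}}$. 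In particular, it is equivalent to $\exists! v\,\chi(v,f,x,y)$ for some $\Delta_0$-formula $\chi$.

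Next comes the analogue of Lemma \ref{Th:UniquenessOfArithmeticAttempts}: if $\Psi_{\mathsf{exp}}(f,x,y_0)$ and $\Psi_{\mathsf{exp}}(g,x,y_1)$, then $f=g$ and $y_0=y_1$. This follows by showing that $f(k)=g(k)$ for all $k\le x$, by induction along the natural number $x$. The induction is obtained from $\Pi_1\textsf{-Foundation}$, or even from set foundation applied to the $\Delta_0$-separable set $\{k\in x+1\mid f(k)\neq g(k)\}$, using uniqueness of products from Lemma \ref{Th:UniquenessOfArithmeticOperations}. We also need existence: for every natural number $x$ there is $f$ with $\Psi_{\mathsf{exp}}(f,x,y)$ for some $y$. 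This is the step I expect to be the main obstacle, since it requires extending $f$ by one point. Suppose existence fails. Since the matrix is $\Delta_{\mathsf{u}}$, the class of natural numbers $x$ with $\neg\exists f\exists y\,\Psi_{\mathsf{exp}}(f,x,y)$ is $\Pi_1$. $\Pi_1\textsf{-Foundation}$ then gives a least such $x$, which is nonzero because $\{\langle 0,1\rangle\}$ witnesses the case $0$. Write $x=k+1$, take the witness $f$ for $k$, and set $f'=f\cup\{\langle x,2\cdot f(k)\rangle\}$. This uses the existence of $2\cdot f(k)$ from Lemma \ref{Th:UniquenessOfArithmeticOperations} and Pair/Union. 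This gives a contradiction, exactly as in Lemma \ref{Th:FiniteApproximationsOfTCl}.

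Finally we assemble the complexity claims. We have $y=2^x$ if and only if $\exists! f\exists! v\,\chi(v,f,x,y)$: existence of $f$ gives the first conjunct, and uniqueness of $f$ for given $x,y$ comes from the uniqueness lemma. By Lemma \ref{Th:UniqueSigmaNormalisation}(I) this is $\Sigma_{\mathsf{u}}$. For the negation, follow the proof of Lemma \ref{Th:ComplexityOfArithmetic}: $\neg(y=2^x)$ holds if and only if
\[
\exists! f\exists! w\exists! v\,\bigl((\neg(x,y \textrm{ are natural numbers})\land f=w=v=\emptyset)\lor(\chi(v,f,x,w)\land w\neq y)\bigr).
\]
Here existence and uniqueness of the pair $(f,w)$ for natural numbers $x$ come from the existence and uniqueness steps above. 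Applying Lemma \ref{Th:UniqueSigmaNormalisation}(I) twice (pairing $f,w,v$ into a single ordered triple) shows this is $\Sigma_{\mathsf{u}}$, so $y=2^x$ is $\Delta_{\mathsf{u}}^{\mathsf{ReR}}$.
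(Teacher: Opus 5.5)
Your outline follows the paper's route: the closure lemma, uniqueness of attempts, and the negation trick of Lemma \ref{Th:ComplexityOfArithmetic}, with existence (stated separately in the paper as Lemma \ref{Th:UniquenessOfPowers}) obtained from $\Pi_1\textsf{-Foundation}$. The step you flagged as the main obstacle, however, fails as you justify it.

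Knowing only that $\Psi_{\mathsf{exp}}(f,x,y)$ is equivalent to $\exists! v\,\chi(v,f,x,y)$ for some $\Delta_0$-formula $\chi$ does not make $\exists f\exists y\,\Psi_{\mathsf{exp}}(f,x,y)$ a $\Sigma_1$-formula. The quantifier $\exists!$ contains the universal clause $\forall v\forall v'(\chi(v)\land\chi(v')\Rightarrow v=v')$, so on this description the class you apply $\Pi_1\textsf{-Foundation}$ to is $\Pi_2$, not $\Pi_1$. Replacing $\exists! v$ by $\exists v$ does give a $\Pi_1$ class. But then minimality only yields $\exists v\,\chi(v,f,k,y)$ for the predecessor $k$, not $\Psi_{\mathsf{exp}}(f,k,y)$, so you cannot extend $f$.

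The missing ingredient is that the concrete witness is automatically unique. Unfold $f(k+1)=2\cdot f(k)$ as $\exists g\,\Psi_{\cdot}(g,2,f(k),f(k+1))$. By Lemma \ref{Th:UniquenessOfArithmeticAttempts}(II) this $g$ is unique, so $\Delta_0\textsf{-Replacement}$ collects these $g$ into a function $G$ with domain $x$. This gives a genuinely $\Delta_0$-formula $\Psi^*(f,G,x,y)$ with $\Psi_{\mathsf{exp}}(f,x,y)\iff\exists G\,\Psi^*(f,G,x,y)$, and $G$ is determined by $f$.

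With this, the class $\{x \mid x \textrm{ is a natural number}\land\forall f\forall G\forall y\,\neg\Psi^*(f,G,x,y)\}$ really is $\Pi_1$. Your inductive step then works once you also extend $G$, using the product attempt for $2\cdot f(k)$ supplied by Lemma \ref{Th:UniquenessOfArithmeticOperations}.

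Your final assembly tacitly needs the same at-most-one-witness property. The equivalence $\exists! x\exists! y\,\phi(x,y)\iff\exists! w\,(\mathsf{OP}(w)\land\phi(\mathsf{fst}(w),\mathsf{snd}(w)))$ behind Lemma \ref{Th:UniqueSigmaNormalisation}(I) fails when one $x$ has exactly one witness $y$ and another $x$ has two. An arbitrary $\chi$ produced by Lemma \ref{Th:ClosurePropertiesDeltaU} need not exclude this. With $\Psi^*$ you can instead write $y=2^x\iff\exists! w\,(\mathsf{OP}(w)\land\Psi^*(\mathsf{fst}(w),\mathsf{snd}(w),x,y))$ directly, and Lemma \ref{Th:ClosurePropertiesDeltaU} is not needed at all.

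Separately, your formula for $\neg(y=2^x)$ has the wrong case split. Suppose $x$ is a natural number and $y$ is not, so $y\neq 2^x$. Then $f=w=v=\emptyset$ satisfies the first disjunct, and the genuine attempt for $x$ with $w=2^x$ satisfies the second (since $2^x\neq y$). So the outer $\exists! f$ fails. The first disjunct must test only the input, i.e. $\neg(x \textrm{ is a natural number})$. Then the disjuncts are exclusive and, with $\Psi^*$ in place of $\chi$, the equivalence holds. The template in Lemma \ref{Th:ComplexityOfArithmetic} that you copied has the same slip, since its first disjunct also mentions the output $z$.
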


\begin{lemma} \label{Th:UniquenessOfPowers}
($\mathsf{ReR}$) For all natural numbers $x$, there exists a unique $y$ such that $y= 2^x$. \hfill$\square$
\end{lemma}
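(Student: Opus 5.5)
We prove Lemma \ref{Th:UniquenessOfPowers} in two parts: uniqueness of $y$, then existence of a witnessing function $f$. Existence uses $\Pi_1\textsf{-Foundation}$ along the natural numbers, exactly as in the proof of Lemma \ref{Th:FiniteApproximationsOfTCl}.

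For uniqueness, suppose $\Psi_{\mathsf{exp}}(f, x, y)$ and $\Psi_{\mathsf{exp}}(f', x, y')$. We show $f(k)=f'(k)$ for all $k \in x+1$. If not, the set $\{k \in x+1 \mid f(k) \neq f'(k)\}$ is nonempty. It exists by $\Delta_0\textsf{-Separation}$, which $\mathsf{ReR}_0$ proves, so by \textsf{Set-Foundation} it has an $\in$-least element $k$. We have $k \neq 0$ since $f(0)=1=f'(0)$. Every nonzero natural number is a successor, so $k=j+1$ with $f(j)=f'(j)$. Then Lemma \ref{Th:UniquenessOfArithmeticOperations}(II) gives $f(j+1)=2\cdot f(j)=2\cdot f'(j)=f'(j+1)$, a contradiction. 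Hence $f=f'$ and $y=f(x)=f'(x)=y'$.

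For existence, fix a natural number $x$. We first put the recursion clause in a bounded form. Using Lemmas \ref{Th:ComplexityOfArithmetic} and \ref{Th:ClosurePropertiesDeltaU}, the formula $\chi(f,x)$ expressing ``$f$ is a function with domain $x+1$, $f(0)=1$, and $(\forall k \in x)(f(k+1)=2\cdot f(k))$'' is $\Delta_{\mathsf{u}}^{\mathsf{ReR}}$. The key observation is that $(\forall k\in x)\, f(k+1)=2\cdot f(k)$ is a bounded quantifier over a $\Delta_{\mathsf{u}}$ matrix, with $k+1$ and $f(k)$ substitutable. So $\neg\exists f\chi(f,n)$ is equivalent to $\forall f\, \neg\chi(f,n)$, and $\neg\chi$ is equivalent to $\exists! w\,\psi(w,f,n)$ with $\psi$ a $\Delta_0$-formula.

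This is the step I expect to be the main obstacle. The formula $\forall f\,\exists! w\,\psi$ is not literally $\Pi_1$, since the $\exists!$ contributes an unbounded existential. I would handle it as the paper does in Lemma \ref{Th:FiniteApproximationsOfTCl}: write an explicit $\Delta_0$ formula $\phi(n,f)$ that states directly that $f$ is a function on $n+1$ with $f(0)=1$, and that for each $k\in n$ there is some $g\in\ldots$ witnessing $\Psi_{\cdot}(g,2,f(k),f(k+1))$. The witness $g$ has domain $2\times f(k)$ and range $f(k+1)$, so it cannot be bounded inside $f$. To fix this, I would strengthen $\phi$ so that $f$ carries its own witnesses, taking $f(k)=\langle 2^k, g_k\rangle$ where $g_k$ witnesses the multiplication. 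With this modification, $\forall f\neg\phi(n,f)$ is genuinely $\Pi_1$.

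We then apply $\Pi_1\textsf{-Foundation}$. If no such $f$ exists for some natural number, take the least natural number $n$ with $\neg\exists f\phi(n,f)$. We have $n\neq 0$, since $\{\langle 0,\langle 1,\ldots\rangle\rangle\}$ works. So $n=m+1$, and there is an $f$ with $\phi(m,f)$. By Lemma \ref{Th:UniquenessOfArithmeticOperations}(II), $2\cdot f(m)$ exists together with its witness $g$. Then $f\cup\{\langle m+1,\langle 2\cdot f(m), g\rangle\rangle\}$ exists by \textsf{Pair} and \textsf{Union}, and it satisfies $\phi(m+1,\cdot)$, which is a contradiction. Finally, projecting first coordinates gives the required $f$ for $\Psi_{\mathsf{exp}}$; the projected function is a set by $\Delta_0\textsf{-Replacement}$.
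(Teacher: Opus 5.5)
Your proof is correct and is essentially the argument the paper leaves implicit. Uniqueness comes from an $\in$-least point of disagreement together with uniqueness of products, and existence from $\Pi_1\textsf{-Foundation}$ applied to the least bad natural number as in Lemma \ref{Th:FiniteApproximationsOfTCl}; packing the multiplication witnesses into the values of $f$ is just the explicit form of the normalisation from Lemmas \ref{Th:UniqueSigmaNormalisation}(II) and \ref{Th:ClosurePropertiesDeltaU} that the paper appeals to, where normalising $\chi$ rather than $\neg\chi$ and dropping the provable uniqueness of witnesses is what makes the failure class genuinely $\Pi_1$.
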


\begin{lemma} \label{Th:PowerFunctionInjective}
($\mathsf{ReR}$) For all natural numbers $x$, $y$ and $z$, if $z= 2^x$ and $z=2^y$, then $x=y$. \hfill$\square$
\end{lemma}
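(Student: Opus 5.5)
We prove the statement by induction on $x$, using $\Pi_1\textsf{-Foundation}$ in the form of a least counterexample. By Lemma \ref{Th:UniquenessOfPowers}, $2^x$ and $2^y$ are defined for every natural number. We first establish, by the same kind of induction used for Lemma \ref{Th:UniquenessOfArithmeticAttempts}, two auxiliary facts about the arithmetic of finite von Neumann ordinals in $\mathsf{ReR}$:
\begin{itemize}
\item[(a)] $2^x \neq 0$ for all natural numbers $x$;
\item[(b)] multiplication by $2$ is injective, i.e. if $2\cdot a = 2\cdot b$ then $a=b$.
\end{itemize}
For (b) it is convenient first to show strict monotonicity: if $a \in b$ then $2\cdot a \in 2\cdot b$. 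This follows from the recursion $2\cdot(k+1) = 2\cdot k + 2$, read off from the bijection witnessing $\Psi_{\cdot}$, together with trichotomy of the ordering $\in$ on natural numbers. Each induction is over the $\Pi_1$ class of counterexamples, where $2^x$, $2\cdot a$ and the like are expressed via their $\Delta_{\mathsf{u}}^{\mathsf{ReR}}$ definitions (Lemmas \ref{Th:ComplexityOfArithmetic} and \ref{Th:ComplexityOfPowers}).

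For the main claim, suppose it fails. Then the class of natural numbers $x$ for which there is $y \neq x$ with $2^x = 2^y$ is nonempty. This class is defined by a formula equivalent to a $\Pi_1$ one, because the relation $z=2^x$ is $\Delta_{\mathsf{u}}^{\mathsf{ReR}}$ and hence its negation is $\Sigma_{\mathsf{u}}$. By $\Pi_1\textsf{-Foundation}$, pick an $\in$-least such $x$, and a witness $y \neq x$ with $2^x = 2^y$.

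We then split into cases.
\begin{itemize}
\item[(i)] If $x = 0$, then $2^y = 1$ with $y \neq 0$. Write $y = k+1$; then $2\cdot 2^k = 1$. This is impossible, since $2\cdot m$ is either $0$ (when $m=0$) or at least $2$, and $2^k \neq 0$ by (a).
\item[(ii)] If $y = 0$, the case is symmetric to (i).
\item[(iii)] Otherwise write $x = k+1$ and $y = l+1$. Using the function $f$ witnessing $\Psi_{\mathsf{exp}}$, we have $2\cdot 2^k = 2\cdot 2^l$. By (b), $2^k = 2^l$. Since $k \in x$, minimality of $x$ gives $k = l$, so $x = y$, a contradiction.
\end{itemize}
The step $2^{k+1} = 2\cdot 2^k$ requires that the restriction of the witnessing function $f$ to $k+1$ witnesses $\Psi_{\mathsf{exp}}(f\restriction(k+1), k, 2^k)$. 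This restriction exists by $\Delta_0\textsf{-Separation}$, and its value is $2^k$ by the uniqueness in Lemma \ref{Th:UniquenessOfPowers}.

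The main obstacle is (b), the injectivity of doubling. It needs the basic order properties of $+$ and $\cdot$ on natural numbers, all proved in $\mathsf{ReR}$ by inductions on $\Pi_1$ classes, and we must check at each step that the induction hypotheses stay within $\Pi_1$ via the $\Delta_{\mathsf{u}}$ closure properties of Lemma \ref{Th:ClosurePropertiesDeltaU}. A cleaner route to (b) may be to work directly with the bijections in $\Psi_{\cdot}$: a bijection witnessing $2\cdot a = z$ induces an order-preserving bijection between $a\times 2$ and $z$, and comparing two such bijections into the same $z$ yields $a=b$ by the same uniqueness argument as in Lemma \ref{Th:UniquenessOfArithmeticAttempts}.
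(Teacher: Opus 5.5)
The paper gives no proof of this lemma; it treats it as a routine induction. Your case analysis is a reasonable way to carry that out, but the step where you invoke $\Pi_1\textsf{-Foundation}$ is not justified as written.

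The class you minimise over is $\{x \mid x \textrm{ is a natural number and } \exists y\,(y \neq x \land 2^x = 2^y)\}$. It begins with an unbounded $\exists y$, so it is $\Sigma_1$, not $\Pi_1$. The fact that $z = 2^x$ is $\Delta_{\mathsf{u}}^{\mathsf{ReR}}$ only handles the matrix $2^x = 2^y$; it does nothing about that leading quantifier. What is $\Pi_1$ is the property you are trying to establish, $\forall y\,(2^x = 2^y \Rightarrow x = y)$. But $\Pi_1\textsf{-Foundation}$ is, contrapositively, induction for $\Sigma_1$ properties, not for $\Pi_1$ properties, so you have the complexities the wrong way round. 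Since $\mathsf{ReR}$ gives foundation only for sets and $\Pi_1$ classes, the $\in$-least $x$ you pick does not follow from what you cite. A least-number principle for $\Sigma_1$ classes of natural numbers can be extracted from $\Pi_1\textsf{-Foundation}$ by the usual arithmetic reversal trick using subtraction, but that is a separate argument you have not given.

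The repair is local, and there are two easy options.

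\emph{Bound the witness.} Take a counterexample pair $x_1 \neq y_1$, choose a natural number $N$ with $x_1, y_1 \in N$, and minimise over $\{x \in N \mid (\exists y \in N)(y \neq x \land 2^x = 2^y)\}$. This class is $\Delta_{\mathsf{u}}^{\mathsf{ReR}}$ by Lemma \ref{Th:ClosurePropertiesDeltaU}, together with the totality and $\Delta_{\mathsf{u}}^{\mathsf{ReR}}$-definability of $2^x$. In case (iii) the predecessors $k \in x_0$ and $l \in y_0$ still lie in $N$ because $N$ is transitive, so your minimality argument goes through unchanged.

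\emph{Use monotonicity.} Treat $2^x$ exactly as you treat doubling in (b). Prove $x \in y \Rightarrow 2^x \in 2^y$ by minimising over $\{y \mid y \textrm{ is a natural number and } (\exists x \in y)\, 2^x \notin 2^y\}$. Here the quantifier over $x$ is bounded, so the class is $\Delta_{\mathsf{u}}^{\mathsf{ReR}}$. The inductive step uses $2^l \in 2 \cdot 2^l = 2^{l+1}$, which is where (a) is genuinely needed. Injectivity then follows from trichotomy.

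Incidentally, (a) is superfluous in your case (i), since $2 \cdot m \neq 1$ for every $m$.
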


Another important ingredient in the inverse Ackermann interpretation is the function that sums arbitrary sets of natural numbers. Let $\Psi_{\Sigma}(f, x, y, z)$ be the formula
\[
\begin{array}{c}
(x \textrm{ and } z \textrm{ are natural numbers}) \land (y \textrm{ is a set of natural numbers}) \land\\
(f \textrm{ is a function}) \land \mathsf{dom}(f)= x+1 \land f(0)= 0 \land f(x)= z \land\\ 
(\forall k \in x)\left( \begin{array}{c}
(k+1 \in y \Rightarrow f(k+1)=f(k)+k+1) \land\\
(k+1 \notin y \Rightarrow f(k+1)=f(k))
\end{array}\right)
\end{array}.
\]
For all natural numbers $z$ and for all sets of natural numbers $y$, define
\[
z= \sum_{x \in y} x \textrm{ if and only if } \exists f \Psi_{\Sigma}\left(f, \bigcup y, y, z\right). 
\]
Again, with Lemmas \ref{Th:ClosurePropertiesDeltaU} and \ref{Th:EverySetOfOrdinalsHasMaximum}, and the availability of $\textsf{Set-Foundation}$ in $\mathsf{ReR}$ yields:

\begin{lemma} \label{Th:ComplexityOfSum} 
The formula $z= \sum_{x \in y} x$ is $\Delta_{\mathsf{u}}^{\mathsf{ReR}^{\neg \infty}}$. \hfill$\square$
\end{lemma}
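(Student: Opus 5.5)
We must show that $z=\sum_{x\in y}x$ is $\Delta_{\mathsf{u}}^{\mathsf{ReR}^{\neg\infty}}$. That is, both the formula and its negation should be $\mathsf{ReR}^{\neg\infty}$-provably equivalent to $\Sigma_{\mathsf{u}}$-formulae. The plan follows the pattern of Lemma \ref{Th:ComplexityOfArithmetic}. We first show that $\Psi_\Sigma(f,x,y,z)$ is itself $\Delta_{\mathsf{u}}^{\mathsf{ReR}}$. Its conjuncts are of three kinds. Some are $\Delta_0$: natural numbers, function, $f(0)=0$, $f(x)=z$. The conjunct $\mathsf{dom}(f)=x+1$ is $\Delta_0$, since $x+1=x\cup\{x\}$ is substitutable. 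The remaining conjuncts are built from $k+1\in y$, $k+1\notin y$, and $f(k+1)=f(k)+k+1$, all under the bounded quantifier $(\forall k\in x)$. The formula $u=v+w$ is $\Delta_{\mathsf{u}}^{\mathsf{ReR}}$ by Lemma \ref{Th:ComplexityOfArithmetic}. The nested sum $f(k)+k+1$ is handled by writing $f(k+1)=f(k)+k+1$ as $(\forall m\in f(k+1)+1)(\ldots)$, or more simply as
\[
f(k+1)=f(k)+(k\cup\{k\}),
\]
whose right-hand argument is substitutable, so a single instance of the addition predicate suffices. Lemma \ref{Th:ClosurePropertiesDeltaU} (closure under $\neg$, $\land$, and bounded $\forall$; implications are obtained from $\neg$ and $\land$) then gives that $\Psi_\Sigma(f,x,y,z)$ is $\Delta_{\mathsf{u}}^{\mathsf{ReR}}$. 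Substituting $\bigcup y$ for $x$ keeps it $\Delta_{\mathsf{u}}$, because $\bigcup$ is substitutable.

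Next we need existence and uniqueness: for every set of natural numbers $y$ there is exactly one pair $(f,z)$ with $\Psi_\Sigma(f,\bigcup y,y,z)$. For this step we first note that $\bigcup y$ is a natural number. If $y=\emptyset$, then $\bigcup y=0$. If $y\neq\emptyset$, then $\bigcup y\in y$ by Lemma \ref{Th:EverySetOfOrdinalsHasMaximum}, which is where $\neg\textsf{Infinity}$ enters. Uniqueness is a straightforward induction on $k\le x$: two such functions agree at $0$, and the step uses uniqueness of sums (Lemma \ref{Th:UniquenessOfArithmeticOperations}). Formally we argue by $\textsf{Set-Foundation}$ on the set $\{k\in x+1\mid f(k)\neq f'(k)\}$, which exists by $\Delta_0$-Separation.

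Existence is the main obstacle. It parallels Lemma \ref{Th:FiniteApproximationsOfTCl}: we use $\Pi_1$-Foundation on the class of $n\le x$ for which no $f$ with domain $n+1$ satisfies the recursion (restricted to $k\in n$). This class is $\Pi_1$ only modulo replacing the $\Delta_{\mathsf{u}}$ matrix by a genuine $\Delta_0$-formula. To arrange this, we express ``$\exists f(\ldots)$'' as $\exists f\exists g(\ldots)$, where $g$ is a witnessing function collecting the unique witnesses for each addition instance. Since each such witness is unique, $g$ exists by $\Delta_0$-Replacement together with Lemma \ref{Th:UniqueSigmaNormalisation}(II). Then $\neg\exists f\,\exists g\,\theta$ with $\theta\in\Delta_0$ is $\Pi_1$. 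At a minimal failing $n+1$, we extend the witness for $n$ by one value $f(n)+n+1$, which exists by Lemma \ref{Th:UniquenessOfArithmeticOperations}. This is a contradiction.

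Finally, with existence and uniqueness established, we write
\[
z=\sum_{x\in y}x \iff \exists!f\,\exists!w\,\bigl(\Psi_\Sigma(f,\textstyle\bigcup y,y,w)\land w=z\bigr),
\]
expanding the $\Delta_{\mathsf{u}}$-formula $\Psi_\Sigma$ into its $\exists!$ form and merging the unique quantifiers via Lemma \ref{Th:UniqueSigmaNormalisation}(I). For the negation, exactly as in Lemma \ref{Th:ComplexityOfArithmetic}, we use
\[
\exists!f\,\exists!w\,\bigl((\neg(\text{$z\in\omega$, $y\subseteq\omega$})\land f=w=\emptyset)\lor(\Psi_\Sigma(f,\textstyle\bigcup y,y,w)\land w\neq z)\bigr),
\]
which is again $\Sigma_{\mathsf{u}}$ by Lemmas \ref{Th:ClosurePropertiesDeltaU} and \ref{Th:UniqueSigmaNormalisation}.
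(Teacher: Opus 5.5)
Your route is the one the paper indicates for this lemma: Lemma \ref{Th:ClosurePropertiesDeltaU} to put $\Psi_\Sigma$ in $\Delta_{\mathsf{u}}$ form, Lemma \ref{Th:EverySetOfOrdinalsHasMaximum} to make $\bigcup y$ a natural number, and $\textsf{Set-Foundation}$ for uniqueness of $f$. You rightly fold in the existence argument of Lemma \ref{Th:UniquenessOfSum}, which the negation clause needs. Your auxiliary witness function $g$, which makes the class handed to $\Pi_1\textsf{-Foundation}$ genuinely $\Pi_1$, is a correct detail that the paper leaves implicit.

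The one step that fails as written is your $\Sigma_{\mathsf{u}}$-form for the negation. Take $y$ a set of natural numbers and $z$ not a natural number, say $y=\emptyset$ and $z=\{\{\emptyset\}\}$. Then $z\neq\sum_{x\in y}x$, but your disjunction has two witnesses:
\begin{itemize}
\item[] $f=w=\emptyset$, via the first disjunct, since the guard $\neg(z\in\omega\land y\subseteq\omega)$ holds because of $z$;
\item[] $f=f_y$ and $w=\sum_{x\in y}x$, via the second disjunct, since this $w$ differs from $z$. Here $f_y$ is the unique function with $\Psi_\Sigma(f_y,\bigcup y,y,\sum_{x\in y}x)$.
\end{itemize}
Since $\mathsf{dom}(f_y)=\bigcup y+1\neq\emptyset$, these are two distinct $f$, so $\exists! f$ fails.

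The fix is to guard on $y$ alone, taking $\neg(y \textrm{ is a set of natural numbers})\land f=w=\emptyset$ as the first disjunct. If $y$ is a set of natural numbers, the only possible witness is then $(f_y,\sum_{x\in y}x)$, and it qualifies exactly when $\sum_{x\in y}x\neq z$, which includes the case where $z$ is not a number. Otherwise the only witness is $(\emptyset,\emptyset)$.

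The template you followed has the same slip. The displayed formula for $\neg(z=x+y)$ in Lemma \ref{Th:ComplexityOfArithmetic} fails when $x,y$ are natural numbers with $x+y\neq 0$ and $z$ is not a natural number, and similarly for multiplication. There too the guard should mention only $x$ and $y$. With that repair the lemma itself survives, so your use of it for $\Psi_\Sigma$ is fine.
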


\begin{lemma} \label{Th:UniquenessOfSum}
($\mathsf{ReR}^{\neg \infty}$) For all sets of natural numbers $y$, there exists a unique $z$ such that $z= \sum_{x\in y} x$. \hfill$\square$
\end{lemma}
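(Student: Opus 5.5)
We prove existence and uniqueness separately. Fix a set $y$ of natural numbers and work in $\mathsf{ReR}^{\neg \infty}$. If $y=\emptyset$ then $\bigcup y = 0$, and the function $\{\langle 0, 0\rangle\}$ witnesses $\Psi_{\Sigma}(f, 0, y, 0)$. Otherwise, by Lemma \ref{Th:EverySetOfOrdinalsHasMaximum}, $N=\bigcup y\in y$ is a natural number, the maximum of $y$.

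\textbf{Uniqueness.} Suppose $\Psi_{\Sigma}(f, N, y, z_0)$ and $\Psi_{\Sigma}(g, N, y, z_1)$. We show $f=g$, whence $z_0=f(N)=g(N)=z_1$. The set $\{k \in N+1 \mid f(k)\neq g(k)\}$ exists by $\Delta_0\textsf{-Separation}$. If it were nonempty, \textsf{Set-Foundation} would give an $\in$-least element $k$. We have $k\neq 0$ since $f(0)=g(0)=0$, so $k=j+1$ with $j\in N$ by Lemma \ref{Th:EveryOrdinalASuccessor}. Then $f(j)=g(j)$. The recursion clause together with Lemma \ref{Th:UniquenessOfArithmeticOperations} (uniqueness of $+$) gives $f(j+1)=g(j+1)$, a contradiction.

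\textbf{Existence.} This is the main step, and we obtain it by $\Pi_1\textsf{-Foundation}$ in the same way as Lemma \ref{Th:FiniteApproximationsOfTCl}. Consider the class of $m\in N+1$ such that there is no function $f$ with domain $m+1$, $f(0)=0$, and the recursion clause for all $k\in m$. Since $+$ is $\Delta_{\mathsf{u}}^{\mathsf{ReR}}$ by Lemma \ref{Th:ComplexityOfArithmetic}, the matrix of this condition is equivalent to a $\Sigma_{\mathsf{u}}$ formula. Via Lemma \ref{Th:UniqueSigmaNormalisation}, the whole statement ``$\neg\exists f\ldots$'' is equivalent to a $\Pi_1$ formula: we absorb the unique witnesses for the additions into the outer universal quantifier, in the same manner as in Lemma \ref{Th:ClosurePropertiesDeltaU}.

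Suppose the class is nonempty and let $m$ be an $\in$-minimal element. Then $m\neq 0$, because $\{\langle 0,0\rangle\}$ works for $m=0$. So $m=j+1$, and there is a function $f$ for $j$. By Lemma \ref{Th:UniquenessOfArithmeticOperations} the value $v$, equal to $f(j)+j+1$ or to $f(j)$ according to whether $j+1\in y$, exists. Then $f\cup\{\langle j+1, v\rangle\}$ is a set by \textsf{Pair} and \textsf{Union}, and it witnesses $m$, a contradiction.

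Taking $m=N$ gives $f$ with $\Psi_{\Sigma}(f, N, y, f(N))$, so $z=f(N)$ is the required sum. The one point needing care is checking that the foundation instance is genuinely $\Pi_1$. Because the arithmetic in the clause is only $\Delta_{\mathsf{u}}$ rather than $\Delta_0$, we should verify this using the closure properties in Lemma \ref{Th:ClosurePropertiesDeltaU}, not take it for granted.
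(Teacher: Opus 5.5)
Your argument is correct and is essentially the one the paper leaves implicit: Lemma \ref{Th:EverySetOfOrdinalsHasMaximum} makes $\bigcup y$ a natural number, uniqueness follows from \textsf{Set-Foundation} on the $\Delta_0$-separated set of points where $f$ and $g$ disagree, and existence follows from $\Pi_1\textsf{-Foundation}$ just as in Lemma \ref{Th:FiniteApproximationsOfTCl}. The complexity point you flag does go through: the $\Psi_+$-witnesses are provably unique (Lemma \ref{Th:UniquenessOfArithmeticAttempts}), so $\Delta_0\textsf{-Replacement}$ can gather them into one function $G$ quantified alongside $f$, making the class literally $\forall f \forall G$ of a $\Delta_0$-formula (a trivial induction also shows each $f(k)$ is a natural number, so the additions you invoke are defined).
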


Note that in Lemma \ref{Th:UniquenessOfSum} we are using the fact that, in $\mathsf{ReR}^{\neg \infty}$, every set of natural numbers $x$, $\bigcup x$ is a natural number (Lemma \ref{Th:EverySetOfOrdinalsHasMaximum}).

For all sets $y$, and for functions $g$ with $y \subseteq \mathsf{dom}(g)$ and $\mathsf{rng}(g)$ a set of natural number, define
\[
\begin{array}{lcl}
z= \sum_{v \in y} 2^{g(v)} &\textrm{if and only if}& (g \textrm{ is a function}) \land y \subseteq \mathsf{dom}(g) \land\\ 
&& (\mathsf{rng}(g \upharpoonright y) \textrm{ is a set of natural numbers}) \land\\
&&\exists a\left( \begin{array}{c}
(\forall u \in a)(\exists w \in y)(u= 2^{g(w)}) \land\\
(\forall w \in y)(\exists u \in a)(u=2^{g(w)}) \land z= \sum_{x \in a} x 
\end{array} \right).
\end{array}
\]
In the special case where $y$ is a set of natural numbers and $g$ is the identity function, we write $z= \sum_{v \in y} 2^v$. Note that the theory $\mathsf{ReR}^{\neg \infty}$ ensures that if $g$ is a function with $y \subseteq \mathsf{dom}(g)$ and $\mathsf{rng}(g \upharpoonright y)$ a set of natural numbers, then the set $a$ that satisfies the conjunction
\[
(\forall u \in a)(\exists w \in y)(u= 2^{g(w)}) \land (\forall w \in y)(\exists u \in a)(u=2^{g(w)})
\] 
exists and is unique. Therefore, combined with Lemmas \ref{Th:ClosurePropertiesDeltaU}, \ref{Th:ComplexityOfPowers} and \ref{Th:ComplexityOfSum} we get:

\begin{lemma} \label{Th:ComplexityOfPowerSum}
The formula $z= \sum_{v \in y} 2^{g(v)}$ is $\Delta_{\mathsf{u}}^{\mathsf{ReR}^{\neg \infty}}$. \hfill$\square$
\end{lemma}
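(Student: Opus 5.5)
We need to show that $z= \sum_{v \in y} 2^{g(v)}$ is $\Delta_{\mathsf{u}}^{\mathsf{ReR}^{\neg \infty}}$. The plan is to rewrite the defining formula so that its only unbounded quantifier is a unique-existential one over the auxiliary set $a$. Both the formula and its negation can then be assembled from pieces already known to be $\Delta_{\mathsf{u}}$ using Lemma \ref{Th:ClosurePropertiesDeltaU}, with Lemma \ref{Th:UniqueSigmaNormalisation} used to merge unique existentials.

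First, the hypothesis part $(g \textrm{ is a function}) \land y \subseteq \mathsf{dom}(g) \land (\mathsf{rng}(g\upharpoonright y)$ is a set of natural numbers$)$ is $\Delta_0$. Next, set $\chi(a,y,g)$ to be the conjunction $(\forall u \in a)(\exists w \in y)(u=2^{g(w)}) \land (\forall w \in y)(\exists u \in a)(u=2^{g(w)})$.

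\begin{itemize}
\item[(a)] The bounded existential $(\exists w\in y)(u=2^{g(w)})$ is not directly covered by Lemma \ref{Th:ClosurePropertiesDeltaU}. By (I) and (III) of that lemma, however, it is the negation of $(\forall w \in y)\neg(u=2^{g(w)})$. Here $g(w)$ is substitutable, since "$t=g(w)$" is $\Delta_0$ and $t$ is uniquely determined by bounded search in $\bigcup\bigcup g$.
\item[(b)] By Lemma \ref{Th:ComplexityOfPowers}, each atomic piece $u=2^{g(w)}$ is $\Delta_{\mathsf{u}}$: it is equivalent to $\exists! p\,(\mathsf{OP}(p)\land \mathsf{snd}(p)=g(w) \land \ldots)$, combining the $\Sigma_{\mathsf{u}}$ witness for $u=2^t$ with $t=g(w)$.
\item[(c)] Hence $\chi$ is $\Delta_{\mathsf{u}}^{\mathsf{ReR}^{\neg\infty}}$. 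By Lemma \ref{Th:ComplexityOfSum} and (II), so is $\chi(a,y,g)\land z=\sum_{x\in a}x$.
\end{itemize}

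The key step is to replace $\exists a$ by $\exists! a$. As remarked before the statement, $\mathsf{ReR}^{\neg\infty}$ proves that under the hypotheses there is exactly one $a$ with $\chi$. Existence comes from $\Delta_0$/$\Sigma_{\mathsf{u}}$-Replacement applied to $w\mapsto 2^{g(w)}$, using Lemma \ref{Th:UniquenessOfPowers}; uniqueness comes from extensionality. So $z=\sum_{v\in y}2^{g(v)}$ is equivalent to the $\Delta_0$ hypothesis conjoined with
\[
\exists! a\,(\chi(a,y,g)\land z=\textstyle\sum_{x\in a}x).
\]
Write the inner formula as $\exists! b\,\theta(b,a,y,g,z)$ with $\theta\in\Delta_0$. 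By Lemma \ref{Th:UniqueSigmaNormalisation}(I), $\exists! a\exists! b\,\theta$ is $\Sigma_{\mathsf{u}}$. The uniqueness of $a$ guarantees that $\exists!a\exists!b$ really captures the intended meaning, and does not collapse to falsity when several $a$ exist.

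For the negation, split into two cases. If the $\Delta_0$ hypothesis fails, use the trivial witness $a=b=\emptyset$, as in the proof of Lemma \ref{Th:ComplexityOfArithmetic}. Otherwise the formula fails exactly when $\exists! a\,(\chi(a,y,g)\land \neg(z=\sum_{x\in a}x))$. The inner part here is $\Delta_{\mathsf{u}}$ because the negation clause of Lemma \ref{Th:ComplexityOfSum} is available. The disjunction, padded with a dummy uniquely determined witness, normalises via Lemma \ref{Th:UniqueSigmaNormalisation}(I) to a $\Sigma_{\mathsf{u}}$-formula.

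The main obstacle I expect is the bookkeeping for the composite term $2^{g(w)}$ inside bounded quantifiers, together with checking that the existence of $a$ genuinely uses only $\Sigma_{\mathsf{u}}$-Replacement plus Lemma \ref{Th:UniquenessOfPowers}. One must confirm that the totality of $t\mapsto 2^t$ on $\mathsf{rng}(g\upharpoonright y)$ is available in $\mathsf{ReR}$, which it is by Lemma \ref{Th:UniquenessOfPowers}.
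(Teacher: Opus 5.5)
Your proposal is correct and follows the same route as the paper. The paper's argument is the one-line remark preceding the statement. Existence and uniqueness of the auxiliary set $a$ (given the $\Delta_0$ hypotheses on $g$ and $y$) let $\exists a$ be upgraded to $\exists! a$. Then Lemmas \ref{Th:ClosurePropertiesDeltaU}, \ref{Th:ComplexityOfPowers} and \ref{Th:ComplexityOfSum} handle both the formula and its negation. You fill in details the paper leaves implicit: the composite term $2^{g(w)}$, bounded existentials via clauses (I) and (III), and the $\emptyset$-padding case split for the negation borrowed from Lemma \ref{Th:ComplexityOfArithmetic}.
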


\begin{lemma} \label{Th:UniquenessOfPowerSum}
($\mathsf{ReR}^{\neg \infty}$) For all sets $y$ and for all function $g$ with $y \subseteq \mathsf{dom}(g)$ and $\mathsf{rng}(g \upharpoonright y)$ is a set of natural numbers, there exists a unique $z$ such that $z= \sum_{v \in y} 2^{g(v)}$. \hfill$\square$  
\end{lemma}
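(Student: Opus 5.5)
The plan is to split the statement into existence and uniqueness of the witness $a$, and then hand off to Lemma \ref{Th:UniquenessOfSum}. Work in $\mathsf{ReR}^{\neg \infty}$ and fix $y$ and a function $g$ with $y \subseteq \mathsf{dom}(g)$ such that $\mathsf{rng}(g \upharpoonright y)$ is a set of natural numbers. First we produce the set $a = \{2^{g(w)} \mid w \in y\}$. By Lemma \ref{Th:UniquenessOfPowers}, for every $w \in y$ there is a unique $u$ with $u = 2^{g(w)}$. By Lemma \ref{Th:ComplexityOfPowers}, the formula $u = 2^{g(w)}$ is equivalent to a $\Sigma_{\mathsf{u}}$-formula. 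Here $g(w)$ is substitutable inside a $\Delta_0$-context with $g$ and $w$ as parameters, so the combined condition can be written as $\exists! e\, \theta(e, w, u, g)$ with $\theta \in \Delta_0$. We then have $(\forall w \in y)\exists! u \exists! e\, \theta(e, w, u, g)$, and $\Sigma_{\mathsf{u}}\textsf{-Replacement}$ (the theorem proved above) yields a set $a$ such that $u \in a$ if and only if $(\exists w \in y)(u = 2^{g(w)})$. This $a$ satisfies both bounded conjuncts in the definition.

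Uniqueness of $a$ follows by \textsf{Extensionality}. If $a'$ also satisfies both conjuncts, then every $u \in a'$ equals $2^{g(w)}$ for some $w \in y$, and so lies in $a$; the converse inclusion holds symmetrically. Here we use the uniqueness of $2^{g(w)}$ from Lemma \ref{Th:UniquenessOfPowers}.

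Since each element of $a$ is a natural number, $a$ is a set of natural numbers. Lemma \ref{Th:UniquenessOfSum} then gives a unique $z$ with $z = \sum_{x \in a} x$; this is where $\neg\textsf{Infinity}$ enters, via Lemma \ref{Th:EverySetOfOrdinalsHasMaximum}, to make $\bigcup a$ a natural number. Existence of $z$ is immediate. For uniqueness, suppose $z$ and $z'$ both satisfy the defining formula with witnesses $a$ and $a'$. The previous paragraph forces $a = a'$, and uniqueness in Lemma \ref{Th:UniquenessOfSum} then forces $z = z'$.

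The step I expect to need the most care is the replacement step. We must check that the composite formula ``$u = 2^{g(w)}$'' genuinely has the $\exists!$-over-$\Delta_0$ shape that $\Sigma_{\mathsf{u}}\textsf{-Replacement}$ requires. The definition of $2^x$ is itself built from $\Psi_{\mathsf{exp}}$, which uses multiplication, and multiplication is only $\Delta_{\mathsf{u}}$. My approach is to unfold it through Lemma \ref{Th:ClosurePropertiesDeltaU} and Lemma \ref{Th:UniqueSigmaNormalisation}(I), pairing the auxiliary unique witnesses into a single one. If the normal form gets awkward, a fallback is to replace on $y$ with the $\Delta_0$-formula asserting that the value is the unique pair $\langle u, f\rangle$ with $\Psi_{\mathsf{exp}}(f, g(w), u)$, using $\Delta_0$-versions of the multiplication witnesses. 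One would then take the set of first coordinates, as in the proof of $\Sigma_{\mathsf{u}}\textsf{-Replacement}$.
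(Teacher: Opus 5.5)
Your proposal is correct and follows essentially the same route as the paper. The paper's remark just before Lemma \ref{Th:ComplexityOfPowerSum} likewise secures existence and uniqueness of the witness set $a=\{2^{g(w)} \mid w \in y\}$ via replacement and Lemma \ref{Th:UniquenessOfPowers}, then reads off existence and uniqueness of $z$ from Lemma \ref{Th:UniquenessOfSum}. Your fallback, which packages the multiplication witnesses together with $f$ into a single $\Delta_0$-specified object so that the replaced pair is genuinely unique, is the more careful choice: $\exists! u\, \exists! e\, \theta$ on its own does not force the pair $\langle u, e\rangle$ to be unique.
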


Formalising the proof that every natural number has a unique representation in binary yields:

\begin{lemma} \label{Th:BinaryRepresentation}
($\mathsf{ReR}^{\neg \infty}$) For all natural numbers $z$, there exists a unique set of natural numbers $y$ such that 
\[
z= \sum_{v \in y} 2^v.
\]
\hfill$\square$
\end{lemma}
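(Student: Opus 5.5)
Existence and uniqueness are proved separately, both by $\Pi_1\textsf{-Foundation}$ applied to the natural numbers, in the same way as Lemma \ref{Th:FiniteApproximationsOfTCl}. The basic facts of finite arithmetic (commutativity, associativity, cancellation, the ordering $x< y \iff x\in y$, and $2^{k}=2^{k-1}+2^{k-1}$) are first proved in $\mathsf{ReR}^{\neg\infty}$ by induction in the $\Delta_0$ or $\Delta_{\mathsf{u}}$ guises supplied by Lemmas \ref{Th:ComplexityOfArithmetic}, \ref{Th:ComplexityOfPowers}, \ref{Th:UniquenessOfSum} and \ref{Th:UniquenessOfPowerSum}. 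We also need the auxiliary estimate: for every finite set of naturals $y$ with $y\subseteq k$, we have $\sum_{v\in y}2^v<2^k$. This follows by induction on $k$, splitting on whether $k-1\in y$ and using $\sum_{v\in k}2^v=2^k-1$, itself proved by induction.

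For existence, suppose some $z$ has no representation. The class of such $z$ is defined by the formula $\neg\exists y(z=\sum_{v\in y}2^v)$. By Lemma \ref{Th:ComplexityOfPowerSum} this formula is $\Pi_1$, possibly after unfolding the $\Sigma_{\mathsf{u}}$ form into a $\Delta_0$ matrix under an existential. So $\Pi_1\textsf{-Foundation}$ gives a least such $z$, and $z\neq 0$ since $0=\sum_{v\in\emptyset}2^v$.

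Next choose the largest $k$ with $2^k\le z$. The set $\{k\in z+1 \mid \exists w(w=2^k\wedge w\in z+1)\}$ exists by $\Delta_{\mathsf{u}}\textsf{-Separation}$ together with Lemma \ref{Th:ComplexityOfPowers}. It is nonempty because $2^0=1\le z$, so Lemma \ref{Th:EverySetOfOrdinalsHasMaximum} yields its maximum $k$. Write $z=2^k+z'$ with $z'<2^k\le z$. By minimality of $z$, there is $y'$ with $z'=\sum_{v\in y'}2^v$. The auxiliary estimate forces $y'\subseteq k$: if some $v\ge k$ were in $y'$, then $z'\ge 2^k$. Then $y=y'\cup\{k\}$ satisfies $z=\sum_{v\in y}2^v$. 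To check this, build the summation witnesses (the set $a$ and the function $f$ of $\Psi_\Sigma$) for $y$ from those for $y'$; this step uses Lemma \ref{Th:PowerFunctionInjective}, so that $2^k$ is a new summand.

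For uniqueness, let $z$ be least with two distinct representations $y_0\ne y_1$. This is again a $\Pi_1$ condition, or it can be handled by foundation applied to the witness formula. The maxima of $y_0$ and $y_1$ must coincide: if $\max y_0<\max y_1=m$, then $\sum_{v \in y_0} 2^v<2^m\le\sum_{v \in y_1} 2^v$ by the auxiliary estimate. Removing the common maximum $m$ and cancelling $2^m$ gives a smaller $z$ with two distinct representations $y_0\setminus\{m\}$ and $y_1\setminus\{m\}$, which contradicts minimality.

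The main obstacle I expect is the bookkeeping that keeps every induction within $\Pi_1\textsf{-Foundation}$. The relevant statements have the form $\forall y(\exists!\dots\Rightarrow\dots)$, and I will rewrite them using the $\Delta_{\mathsf{u}}^{\mathsf{ReR}^{\neg\infty}}$ characterisations so that the negated class is genuinely $\Pi_1$. The second delicate point is the manipulation of the $\Psi_\Sigma$ witnesses when a new maximal element is added to $y$. Here the order of summation matters, since $f$ sums in increasing order, so adding the maximal element only extends $f$ by further values, and that should make the step routine.
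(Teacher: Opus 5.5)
Your overall strategy is the standard binary-representation argument that the paper only says to ``formalise'', and the existence half works as you describe: $\neg\exists y\,(z=\sum_{v\in y}2^v)$ is $\Pi_1$ once the sum is written with its uniquely determined witnesses. The gap is in the uniqueness half, at exactly the bookkeeping you flagged. The class of $z$ with two distinct representations is defined by $\exists y_0\exists y_1(y_0\neq y_1\wedge z=\sum_{v\in y_0}2^v\wedge z=\sum_{v\in y_1}2^v)$. This is an unbounded existential, so it is naturally $\Sigma_1$, not ``again'' $\Pi_1$, and $\Pi_1\textsf{-Foundation}$ does not directly give it a least element. Applying foundation ``to the witness formula'' does not obviously help either, since an $\in$-minimal witness need not correspond to a least $z$. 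Your induction on $k$ for the auxiliary estimate has the same problem, because its hypothesis quantifies over all $y\subseteq k$, so the failure class is again existential.

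The repair needs one more observation: every representation $y$ of some $n\le z$ satisfies $y\subseteq z$. Indeed, $v\in y$ gives $v<2^v\le n$, using the lower bound $2^v\le\sum_{u\in y}2^u$, which you need anyway. Since $\mathcal{P}(z)$ is a set by Lemma \ref{Th:PowersetsOfNumbersExist}, the set $\{n\in z+1\mid(\exists y_0,y_1\in\mathcal{P}(z))(y_0\neq y_1\wedge n=\sum_{v\in y_0}2^v\wedge n=\sum_{v\in y_1}2^v)\}$ exists by Lemmas \ref{Th:ClosurePropertiesDeltaU} and \ref{Th:ComplexityOfPowerSum} together with $\Delta_{\mathsf{u}}\textsf{-Separation}$. \textsf{Set-Foundation} then gives its least element, and your cancellation of the common maximum goes through unchanged. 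Alternatively, you can derive the least-number principle for $\Sigma_1$ classes of natural numbers from $\Pi_1\textsf{-Foundation}$ by an order-reversal argument using subtraction. For the auxiliary estimate, fix $y$ and induct on $j$ for $\sum_{v\in y\cap j}2^v<2^j$. By totality and uniqueness of the sums this is $\Delta_1$ in the parameter $y$, so its failure class is $\Pi_1$. Two small mislabellings remain. First, $y'\subseteq k$ follows from that lower bound plus monotonicity of $x\mapsto 2^x$, not from the upper-bound estimate. Second, extending the $\Psi_\Sigma$-witness ``by further values'' needs $2^k$ to exceed every $2^v$ with $v\in y'$. That is strict monotonicity, not merely the injectivity of Lemma \ref{Th:PowerFunctionInjective}.
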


We now have everything we need to define the inverse Ackermann operation. Let $\Psi_{\mathsf{IA}}(f, x, y, z)$ be the formula
\[
\begin{array}{c}
y= \mathsf{tcl}(\{x\}) \land (f \textrm{ is a function}) \land \mathsf{dom}(f)= y \land\\
(\forall u \in y)\left(f(u) = \sum_{v \in u} 2^{f(v)} \right) \land f(x)= z
\end{array}.
\]
For all sets $x$ and for all natural numbers $z$, define
\[
z= \mathcal{I}_{\mathsf{Ack}}(x) \textrm{ if and only if } \exists f \exists y \Psi_{\mathsf{IA}}(f, x, y, z).
\]

\begin{lemma} \label{Th:UniquenessOfInverseAckermann}
($\mathsf{ReR}^{\neg \infty}$) Let $x$ be a set. If $f_0$, $y_0$, $f_1$, $y_1$, $z_0$ and $z_1$ are such that $\Psi_{\mathsf{IA}}(f_0, x, y_0, z_0)$ and $\Psi_{\mathsf{IA}}(f_1, x, y_1, z_1)$, then $y_0=y_1$, $f_0=f_1$ and $z_0=z_1$.
\end{lemma}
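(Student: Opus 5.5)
First, $y_0=y_1$ is immediate: both satisfy $y=\mathsf{tcl}(\{x\})$, and $\mathsf{tcl}$ is unique by its definition (each is contained in the other by minimality, so Extensionality gives equality). Write $y$ for this common set; it is transitive and contains $x$. Once $f_0=f_1$ is established, $z_0=f_0(x)=f_1(x)=z_1$ follows at once. So the work is in showing $f_0=f_1$ on the common domain $y$.

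For $f_0=f_1$ I would argue by $\in$-induction inside $y$. Suppose $f_0\neq f_1$. Then $d=\{u\in y \mid f_0(u)\neq f_1(u)\}$ is a nonempty set by $\Delta_0\textsf{-Separation}$, since $f_0(u)\neq f_1(u)$ is $\Delta_0$ in the parameters $f_0,f_1$. By \textsf{Set-Foundation}, which follows from $\Pi_1\textsf{-Foundation}$, $d$ has an $\in$-minimal element $u$. Because $y$ is transitive, every $v\in u$ lies in $y$ but not in $d$, so $f_0(v)=f_1(v)$ for all $v\in u$. Thus $f_0\upharpoonright u=f_1\upharpoonright u$, and both restrictions take values in the natural numbers.

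The defining clause gives $f_i(u)=\sum_{v\in u}2^{f_i(v)}$ for $i=0,1$. Unpacking the definition, there are sets $a_0,a_1$ with $a_i=\{2^{f_i(v)} \mid v\in u\}$ and $f_i(u)=\sum_{w\in a_i}w$. Since $f_0$ and $f_1$ agree on $u$, and $2^{n}$ is single-valued by Lemma \ref{Th:UniquenessOfPowers}, $a_0$ and $a_1$ have the same elements, so $a_0=a_1$. The sum over a fixed set of natural numbers is unique by Lemma \ref{Th:UniquenessOfSum} (equivalently Lemma \ref{Th:UniquenessOfPowerSum}), hence $f_0(u)=f_1(u)$. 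This contradicts $u\in d$, so $f_0=f_1$.

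The main point needing care is making the quantities $\sum_{v\in u}2^{f_i(v)}$ well defined. This needs $\mathsf{rng}(f_i\upharpoonright u)$ to be a set of natural numbers, which holds because the $\Psi_{\mathsf{IA}}$ clause includes that hypothesis in the definition of the sum. It also needs the uniqueness lemmas, which are available only in $\mathsf{ReR}^{\neg\infty}$, which is why the statement is placed in that theory. Everything else is routine bookkeeping.
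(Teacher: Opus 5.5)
Your proposal is correct and follows essentially the same route as the paper: identify $y_0=y_1=\mathsf{tcl}(\{x\})$, take an $\in$-minimal element of the disagreement set $\{v\in y_0 \mid f_0(v)\neq f_1(v)\}$, and use the uniqueness of $\sum_{v\in w}2^{f(v)}$ (Lemma \ref{Th:UniquenessOfPowerSum}) to derive a contradiction. You also make explicit some steps the paper leaves implicit, such as forming the disagreement set by $\Delta_0\textsf{-Separation}$ and using the transitivity of $\mathsf{tcl}(\{x\})$ to ensure that the elements of the minimal element lie in the common domain.
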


\begin{proof}
Let $f_0$, $y_0$, $f_1$, $y_1$, $z_0$ and $z_1$ be such that $\Psi_{\mathsf{IA}}(f_0, x, y_0, z_0)$ and $\Psi_{\mathsf{IA}}(f_1, x, y_1, z_1)$. We immediately have $y_0= \mathsf{tcl}(\{x\})= y_1$. Now, let $w$ be an $\in$-minimal element of $\{v \in y_0 \mid f_0(v) \neq f_1(v)\}$. So, for all $u \in w$, $f_0(u)=f_1(u)$. Therefore, by Lemma \ref{Th:UniquenessOfPowerSum},
\[
f_0(w)= \sum_{u \in w} 2^{f_0(u)}= \sum_{u \in w} 2^{f_1(u)}= f_1(w),
\]
which is a contradiction. Therefore $f_0=f_1$ and $z_0=z_1$. \hfill$\square$ 
\end{proof}

\begin{corollary}
($\mathsf{ReR}^{\neg \infty}$) The formula $z= \mathcal{I}_{\mathsf{Ack}}(x)$ is $\Sigma_{\mathsf{u}}$. \hfill$\square$
\end{corollary}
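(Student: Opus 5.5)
We need to show that $z= \mathcal{I}_{\mathsf{Ack}}(x)$ is equivalent in $\mathsf{ReR}^{\neg \infty}$ to a formula $\exists! w\,\theta(w, x, z)$ with $\theta$ a $\Delta_0$-formula. The plan is to package the witnesses $f$ and $y$ in $\Psi_{\mathsf{IA}}(f, x, y, z)$ into a single object, and to replace the non-$\Delta_0$ clauses of $\Psi_{\mathsf{IA}}$ by unique witnesses that are themselves bundled into that object. Uniqueness of the whole package comes from Lemma \ref{Th:UniquenessOfInverseAckermann}.

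The first step is to analyse the clauses of $\Psi_{\mathsf{IA}}$.
\begin{itemize}
\item The clause $y= \mathsf{tcl}(\{x\})$ is $\Sigma_{\mathsf{u}}$ by Lemma \ref{Th:TClSigmaComplexity}. Here $\{x\}$ is substitutable, so the formula can be rewritten with $x$ as a parameter.
\item The clauses ``$f$ is a function'', $\mathsf{dom}(f)=y$ and $f(x)=z$ are $\Delta_0$.
\item The clause $(\forall u \in y)\bigl(f(u)= \sum_{v\in u}2^{f(v)}\bigr)$ is handled as follows. By Lemma \ref{Th:ComplexityOfPowerSum}, the inner formula (with $g=f$) is $\Delta_{\mathsf{u}}^{\mathsf{ReR}^{\neg\infty}}$. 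By Lemma \ref{Th:ClosurePropertiesDeltaU}(III), the bounded universal quantification is again $\Delta_{\mathsf{u}}$, in particular $\Sigma_{\mathsf{u}}$.
\end{itemize}
For this to be legitimate, $y$ must be transitive, so that $u\subseteq \mathsf{dom}(f)$ for each $u \in y$. This follows from the $\mathsf{tcl}$ clause.

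The second step is to conjoin everything. By Lemma \ref{Th:ClosurePropertiesDeltaU}(II), or directly by Lemma \ref{Th:UniqueSigmaNormalisation}(I), the conjunction of these $\Sigma_{\mathsf{u}}$/$\Delta_0$ pieces is equivalent to $\exists! s\, \chi(s, f, x, y, z)$ for some $\Delta_0$-formula $\chi$. Thus $\Psi_{\mathsf{IA}}(f,x,y,z)$ is equivalent to $\exists! s\,\chi(s,f,x,y,z)$.

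The third step is to code the triple $w=\langle s, \langle f, y\rangle\rangle$. Using the substitutable $\mathsf{OP}$, $\mathsf{fst}$ and $\mathsf{snd}$, I obtain a $\Delta_0$-formula $\theta(w,x,z)$ saying that $w$ is such a triple and $\chi(\mathsf{fst}(w), \mathsf{fst}(\mathsf{snd}(w)), x, \mathsf{snd}(\mathsf{snd}(w)), z)$ holds. I then check the equivalence in both directions.
\begin{itemize}
\item Suppose $\exists! w\,\theta(w,x,z)$. Then in particular some $f,y$ satisfy $\Psi_{\mathsf{IA}}$, so $z = \mathcal{I}_{\mathsf{Ack}}(x)$.
\item Conversely, suppose $z=\mathcal{I}_{\mathsf{Ack}}(x)$ and let $w, w'$ both satisfy $\theta(\cdot, x, z)$. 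By Lemma \ref{Th:UniquenessOfInverseAckermann}, their $f$ and $y$ components coincide. The $s$ components then coincide by the uniqueness built into $\exists! s\,\chi$. Hence $w=w'$.
\end{itemize}

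The main obstacle is the bookkeeping in the first step. The closure lemmas are stated for $\Delta_{\mathsf{u}}$ formulas, so I must make sure each piece really is $\Delta_{\mathsf{u}}$, not merely $\Sigma_{\mathsf{u}}$, where Lemma \ref{Th:ClosurePropertiesDeltaU} is applied. For $\mathsf{tcl}$ this is Corollary \ref{Th:ComplexityOfTCl}, and for the sum it is Lemma \ref{Th:ComplexityOfPowerSum}. The side condition that $\mathsf{rng}(f\upharpoonright u)$ is a set of natural numbers must also be present; it is already part of the definition of $\sum_{v \in u} 2^{f(v)}$, so no extra work is needed there.
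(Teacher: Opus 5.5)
Your route is the one the paper intends: the corollary is left as immediate from Lemma~\ref{Th:UniquenessOfInverseAckermann} and the complexity lemmas. Two parts of your argument are fine. The existence half works. So does the implication from $\exists! w\,\theta(w,x,z)$ to $z=\mathcal{I}_{\mathsf{Ack}}(x)$, since uniqueness of $w$ forces $s$ to be the only $\chi$-witness for its own $f,y$.

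The gap is in the uniqueness half of your third step. You take an arbitrary $w'=\langle s',\langle f',y'\rangle\rangle$ with $\theta(w',x,z)$ and apply Lemma~\ref{Th:UniquenessOfInverseAckermann} to $f',y'$. That lemma needs $\Psi_{\mathsf{IA}}(f',x,y',z)$. All you know is $\chi(s',f',x,y',z)$ for one $s'$, whereas $\Psi_{\mathsf{IA}}(f',x,y',z)$ is equivalent to $\exists! s\,\chi(s,f',x,y',z)$. Nothing you have established controls how many witnesses $\chi$ has when $\Psi_{\mathsf{IA}}$ is false, and a $\Sigma_{\mathsf{u}}$-formula can be false precisely because its matrix has two witnesses. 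A spurious pair $(f',y')$ with two or more $\chi$-witnesses would give further solutions of $\theta(\cdot,x,z)$. Then $\exists! w\,\theta(w,x,z)$ would fail even though $z=\mathcal{I}_{\mathsf{Ack}}(x)$. Citing Lemma~\ref{Th:UniqueSigmaNormalisation}(I) for $\exists! p\,\exists! s$ does not help, because its pairing argument tacitly needs the same hypothesis. Pairing is harmless in your second step only because there the conjuncts have independent witnesses.

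The missing idea is that the $\Delta_0$ matrix must be \emph{functional}. That is, $\mathsf{ReR}^{\neg\infty}$ should prove $\chi(s,f,x,y,z)\land\chi(s',f,x,y,z)\Rightarrow s=s'$ for all $f,x,y,z$, not just for those satisfying $\Psi_{\mathsf{IA}}$. Then $\exists s\,\chi$ already implies $\Psi_{\mathsf{IA}}$, and your argument goes through verbatim. This holds for the concrete witnesses behind the lemmas you cite, but you must check it, since those lemmas only record equivalence with $\exists!$:
\begin{itemize}
\item The approximation function in Lemma~\ref{Th:TClMembershipComplexity} is unique. The clause $u\notin f(n-1)$ pins its length to the first stage at which $u$ appears, using monotonicity and the uniqueness in Lemma~\ref{Th:FiniteApproximationsOfTCl}.
\item The witnesses for $+$, $\cdot$, $2^x$ and $\sum$ are unique by Lemma~\ref{Th:UniquenessOfArithmeticAttempts} and its analogues. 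Hence so is the set $a$ in the definition of $\sum_{v\in u}2^{f(v)}$.
\item Pairing of conjuncts, and coding a bounded $\forall$ by a function on the bound (Lemma~\ref{Th:UniqueSigmaNormalisation}(II)), both preserve functionality.
\end{itemize}

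Your stated main obstacle is also misplaced. Only positive constructions are involved, and a bounded $\forall$ of a $\Sigma_{\mathsf{u}}$-formula is already $\Sigma_{\mathsf{u}}$ by Lemma~\ref{Th:UniqueSigmaNormalisation}(II). So the distinction between $\Delta_{\mathsf{u}}$ and $\Sigma_{\mathsf{u}}$ is not the issue; what you have to track is uniqueness of witnesses.
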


The function $\mathcal{I}_{\mathsf{Ack}}$ defines a bijection between the sets and the natural numbers in the theory $\mathsf{ReR}^{\neg \infty}$.

\begin{theorem}
($\mathsf{ReR}^{\neg \infty}$) For all $x$, there exists a unique natural number $z$ such that $z= \mathcal{I}_{\mathsf{Ack}}(x)$.
\end{theorem}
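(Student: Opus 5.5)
Uniqueness is already settled by Lemma \ref{Th:UniquenessOfInverseAckermann}, so the work is existence. The plan is to argue by contradiction using $\Pi_1\textsf{-Foundation}$, mirroring the proof of Theorem \ref{Th:TCoInReR}. By Theorem \ref{Th:TCoInReR} applied to $\{x\}$ (which exists by \textsf{Pair}), the set $\mathsf{tcl}(\{x\})$ exists. By Corollary \ref{Th:ComplexityOfTCl} and Lemmas \ref{Th:ClosurePropertiesDeltaU} and \ref{Th:ComplexityOfPowerSum}, the matrix of $\Psi_{\mathsf{IA}}$ is $\Delta_{\mathsf{u}}^{\mathsf{ReR}^{\neg\infty}}$. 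It follows, using Lemma \ref{Th:UniqueSigmaNormalisation} to merge the unique witnesses $f,y,z$ given by Lemma \ref{Th:UniquenessOfInverseAckermann}, that ``$z=\mathcal{I}_{\mathsf{Ack}}(x)$'' is equivalent to $\exists! w\,\theta(w,x,z)$ with $\theta$ a $\Delta_0$-formula. Consequently the class $\{x \mid \neg\exists z\, (z=\mathcal{I}_{\mathsf{Ack}}(x))\}$ is defined by a $\Pi_1$-formula, and if it is nonempty we may take an $\in$-minimal element $b$.

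Next we assemble a witness for $b$. For each $u\in b$ there is, by minimality and Lemma \ref{Th:UniquenessOfInverseAckermann}, a unique triple $\langle f_u, y_u, z_u\rangle$ with $\Psi_{\mathsf{IA}}(f_u,u,y_u,z_u)$. Packaging this triple together with the auxiliary unique witness from the $\Sigma_{\mathsf{u}}$ normal form into a single ordered tuple, we get $(\forall u\in b)\exists! w\,\theta'(w,u)$ with $\theta'$ a $\Delta_0$-formula. $\Delta_0\textsf{-Replacement}$ (or $\Sigma_{\mathsf{u}}\textsf{-Replacement}$) then gives a set $c$ collecting all these tuples. Let $g=\bigcup_{u\in b} f_u$, obtained from $c$ by projections and \textsf{Union}.

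The key claim is that any two of the $f_u$ agree on their common domain. To prove it, take $u,u'\in b$ and consider the restrictions of $f_u$ and $f_{u'}$ to $y_u\cap y_{u'}$, which is a transitive set. If the two disagree somewhere there, $\textsf{Set-Foundation}$ gives an $\in$-minimal point of disagreement, and the recursion clause together with Lemma \ref{Th:UniquenessOfPowerSum} forces agreement at that point, a contradiction. Hence $g$ is a function with domain $\bigcup_{u\in b}y_u = \mathsf{tcl}(b)$, and it satisfies the recursion clause there. Now set $F=g\cup\{\langle b, z\rangle\}$, where $z=\sum_{v\in b}2^{g(v)}$ exists by Lemma \ref{Th:UniquenessOfPowerSum}. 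For this we need $\mathsf{rng}(g\upharpoonright b)$ to be a set of natural numbers, which holds because each $z_u$ is one. Then $\Psi_{\mathsf{IA}}(F,b,\mathsf{tcl}(\{b\}),z)$ holds, since $\mathsf{tcl}(\{b\})=\{b\}\cup\mathsf{tcl}(b)$ and $b\notin\mathsf{tcl}(b)$ by foundation. This contradicts the choice of $b$.

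The main obstacle I expect is the complexity bookkeeping in the first step: verifying that nonexistence of $\mathcal{I}_{\mathsf{Ack}}(x)$ is genuinely $\Pi_1$. This requires checking that every piece of $\Psi_{\mathsf{IA}}$ ($\mathsf{tcl}$, the power sums, and the universal quantifier bounded by $y$) has $\Sigma_{\mathsf{u}}$ form with \emph{unique} witnesses, so that the $\exists!$ normal forms combine via Lemma \ref{Th:UniqueSigmaNormalisation}. Any non-uniqueness would break both the $\Pi_1$ definability and the applicability of $\Delta_0\textsf{-Replacement}$.
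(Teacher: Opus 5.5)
Your proposal is correct and follows the same route as the paper. Both take an $\in$-minimal counterexample $b$ via $\Pi_1\textsf{-Foundation}$, collect the unique witnesses $\langle f_u, y_u, z_u\rangle$ for $u \in b$ by replacement, glue the $f_u$ into one function $g$, and extend it by $\langle b, \sum_{v \in b} 2^{g(v)}\rangle$. You also fill in details the paper dismisses as ``straightforward'': why the $f_u$ agree on common domains (via \textsf{Set-Foundation}), and that the glued function has domain $\mathsf{tcl}(b)$ rather than $\mathsf{tcl}(\{b\})$, with $b \notin \mathsf{tcl}(b)$ needed so that adding $\langle b, z\rangle$ still yields a function.
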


\begin{proof}
Work in the theory $\mathsf{ReR}^{\neg \infty}$. Note that the uniqueness clause of the theorem follows immediately from Lemma \ref{Th:UniquenessOfInverseAckermann}. To prove the existence part of the statement, use $\Pi_1\textsf{-Foundation}$ to find an $\in$-minimal element, $w$, of the class $\{ v \mid \neg \exists z (z = \mathcal{I}_{\mathsf{Ack}}(v)) \}$. Let $\theta(g, x)$ be the formula
\[
\mathsf{OP}(g) \land \mathsf{OP}(\mathsf{snd}(g)) \land \Psi_{\mathsf{IA}}(\mathsf{fst}(g), x, \mathsf{fst}(\mathsf{snd}(g)), \mathsf{snd}(\mathsf{snd}(g))). 
\]
Now, $(\forall u \in w)\exists! g \theta(g, u)$. Therefore, using $\Sigma_{\mathsf{u}}\textsf{-Replacement}$, let $c$ be such that for all $g$, $g \in c$ if and only if $(\exists u \in w)\theta(g, u)$. Using Theorem \ref{Th:TCoInReR}, let $y= \mathsf{tcl}(\{w\})$. Let $h^\prime$ be the function with domain $y$ such that for all $u \in y$, 
\[
h^\prime(u)= r \textrm{ if and only if } (\exists g \in c)(\mathsf{fst}(g)(u)= r).
\]
It is straightforward to check that $h^\prime$ exists as a set and is well-defined. Using Lemma \ref{Th:UniquenessOfPowerSum} and the fact that $w \subseteq y$, let
\[
z= \sum_{u \in w} 2^{h^\prime(u)}.
\]
Let $h= h^\prime \cup \{\langle w, z \rangle\}$. Then $\Psi_{\mathsf{IA}}(h, w, y, z)$, which contradicts our choice of $w$. \hfill$\square$
\end{proof}

\begin{corollary}
The formula $z= \mathcal{I}_{\mathsf{Ack}}(x)$ is $\Delta_{\mathsf{u}}^{\mathsf{ReR}^{\neg \infty}}$. \hfill$\square$
\end{corollary}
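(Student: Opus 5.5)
We need to show that $z=\mathcal{I}_{\mathsf{Ack}}(x)$ and its negation are both $\Sigma_{\mathsf{u}}$ in $\mathsf{ReR}^{\neg\infty}$. The positive half is the preceding corollary, so the work is entirely in the negation.

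The plan is to exploit the totality and uniqueness just established. By the preceding theorem, for every $x$ there is exactly one natural number $z$ with $z=\mathcal{I}_{\mathsf{Ack}}(x)$. By Lemma \ref{Th:UniquenessOfInverseAckermann}, the witnessing $f$ and $y$ are also unique. Hence, writing $\exists! u\,\phi(u,x,z)$ for the $\Sigma_{\mathsf{u}}$-form of $z=\mathcal{I}_{\mathsf{Ack}}(x)$ with $\phi$ a $\Delta_0$-formula, we have
\[
\neg(z=\mathcal{I}_{\mathsf{Ack}}(x)) \iff \exists! w\,(\mathsf{OP}(w)\land \phi(\mathsf{fst}(w),x,\mathsf{snd}(w)) \land \mathsf{snd}(w)\neq z).
\]
This is exactly the pattern used in Corollary \ref{Th:ComplexityOfTCl}. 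For the forward direction, the unique pair $w=\langle u,z'\rangle$ with $z'=\mathcal{I}_{\mathsf{Ack}}(x)$ must have $z'\neq z$. For the reverse direction, uniqueness of $u$ for each value, together with uniqueness of the value, gives uniqueness of $w$. The matrix is $\Delta_0$ because $\mathsf{fst}$ and $\mathsf{snd}$ are substitutable.

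The one step needing care is that the positive form really is a single $\exists!$ over a $\Delta_0$ matrix. As defined, $\Psi_{\mathsf{IA}}$ contains the clauses $y=\mathsf{tcl}(\{x\})$ (Corollary \ref{Th:ComplexityOfTCl}) and $f(u)=\sum_{v\in u}2^{f(v)}$ (Lemma \ref{Th:ComplexityOfPowerSum}), both of which are only $\Delta_{\mathsf{u}}$. I would therefore assemble $\Psi_{\mathsf{IA}}$ as a $\Delta_{\mathsf{u}}$-formula using the closure properties in Lemma \ref{Th:ClosurePropertiesDeltaU}: the bounded quantifier $(\forall u\in y)$ is handled by (III), and the conjunctions by (II). This yields a $\Delta_0$-formula $\theta(v,f,x,y,z)$ with $\Psi_{\mathsf{IA}}\iff\exists! v\,\theta$.

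Then $\exists f\exists y\,\Psi_{\mathsf{IA}}$ becomes $\exists! f\,\exists! y\,\exists! v\,\theta$, using Lemma \ref{Th:UniquenessOfInverseAckermann} for uniqueness of $f$ and $y$. Lemma \ref{Th:UniqueSigmaNormalisation}(I) then collapses this into a single $\exists!$ over a $\Delta_0$-formula by coding the witnesses as iterated ordered pairs. With that in hand, the displayed equivalence finishes the proof.
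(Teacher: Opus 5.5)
Your proposal is correct and is exactly the argument the paper leaves implicit: the $\Sigma_{\mathsf{u}}$ form from the preceding corollary is combined with the totality and uniqueness of $\mathcal{I}_{\mathsf{Ack}}$ and fed through the pairing trick of Corollary \ref{Th:ComplexityOfTCl}. One small correction: the uniqueness of $w$ that you assign to the reverse direction belongs to the forward direction, and both it and the actual reverse direction depend on the $\Delta_0$ matrix having witnesses only for the true value $z'=\mathcal{I}_{\mathsf{Ack}}(x)$ (any witness codes a genuine $\Psi_{\mathsf{IA}}$-computation, which is unique by Lemma \ref{Th:UniquenessOfInverseAckermann}); your construction has this property, and the paper tacitly relies on it too in Corollary \ref{Th:ComplexityOfTCl}.
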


\begin{theorem}
($\mathsf{ReR}^{\neg \infty}$) For all natural numbers $z$, there exists a unique set $x$ such that $z= \mathcal{I}_{\mathsf{Ack}}(x)$. 
\end{theorem}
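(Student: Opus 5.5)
Uniqueness follows from injectivity of $\mathcal{I}_{\mathsf{Ack}}$; existence from a $\Pi_1\textsf{-Foundation}$ induction on the natural numbers, using binary representations (Lemma \ref{Th:BinaryRepresentation}) to assemble the required set from sets already obtained for smaller numbers. We work throughout in $\mathsf{ReR}^{\neg \infty}$.

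\textbf{Uniqueness.} Show that $\mathcal{I}_{\mathsf{Ack}}$ is injective. Suppose some $x_0 \neq x_1$ have the same value. By the previous theorem and the $\Delta_{\mathsf{u}}$ complexity of $z=\mathcal{I}_{\mathsf{Ack}}(x)$, the class of $x_0$ for which such an $x_1$ exists is given by a formula to which $\Pi_1\textsf{-Foundation}$ applies. (Alternatively, apply foundation to the natural number $z$, the least value with two preimages; the formula is $\Sigma_1$, so its negation is $\Pi_1$.) I will do the latter. Take $z$ least such that there are $x_0 \neq x_1$ with $\mathcal{I}_{\mathsf{Ack}}(x_0)=\mathcal{I}_{\mathsf{Ack}}(x_1)=z$.

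By the defining clause of $\Psi_{\mathsf{IA}}$,
\[
z=\sum_{u \in x_i} 2^{\mathcal{I}_{\mathsf{Ack}}(u)} \quad (i=0,1).
\]
Here one checks that the witnessing function $f$ restricted to $\mathsf{tcl}(\{u\})$ witnesses $\mathcal{I}_{\mathsf{Ack}}(u)$, using uniqueness of transitive closures. Let $a_i=\{\mathcal{I}_{\mathsf{Ack}}(u) \mid u \in x_i\}$, which is a set by $\Sigma_{\mathsf{u}}\textsf{-Replacement}$. Each element of $a_i$ is less than $z$, since $2^k \leq z$ forces $k<z$. By minimality of $z$, the map $u \mapsto \mathcal{I}_{\mathsf{Ack}}(u)$ is injective on $x_i$. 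Hence the sum over $x_i$ equals $\sum_{v\in a_i}2^v$ with no collisions, and I need to verify that the definition of $\sum_{v\in y}2^{g(v)}$ via the image set $a$ matches this. Lemma \ref{Th:BinaryRepresentation} then gives $a_0=a_1$. Again by minimality of $z$ (each element of $a_i$ is below $z$), each $k\in a_0$ has a unique preimage, so $x_0=x_1$ by extensionality, a contradiction.

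\textbf{Existence.} Suppose some natural number $z$ is not in the range. The class of such $z$ is $\Pi_1$, since $\exists x (z=\mathcal{I}_{\mathsf{Ack}}(x))$ is $\Sigma_1$ after unfolding. So $\Pi_1\textsf{-Foundation}$ gives a least such $z$. By Lemma \ref{Th:BinaryRepresentation}, take $y$ with $z=\sum_{v\in y}2^v$; every $v\in y$ satisfies $v<z$. For each $v \in y$ there is, by minimality and uniqueness, exactly one $x$ with $v=\mathcal{I}_{\mathsf{Ack}}(x)$. Coding the witness triple $\langle f,\langle t,x\rangle\rangle$ as in the previous proof, $\Sigma_{\mathsf{u}}\textsf{-Replacement}$ yields the set $b=\{x \mid (\exists v\in y)\, v=\mathcal{I}_{\mathsf{Ack}}(x)\}$. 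Then $b$ has a value $\mathcal{I}_{\mathsf{Ack}}(b)$ by the previous theorem, and unfolding the recursion gives
\[
\mathcal{I}_{\mathsf{Ack}}(b)=\sum_{x\in b}2^{\mathcal{I}_{\mathsf{Ack}}(x)}=\sum_{v\in y}2^v=z,
\]
a contradiction.

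\textbf{Main obstacle.} The delicate part is the bookkeeping lemma that, for a witness $(f,y,z)$ to $\Psi_{\mathsf{IA}}$, each $u\in y$ has $f(u)=\mathcal{I}_{\mathsf{Ack}}(u)$. This needs $\mathsf{tcl}(\{u\})\subseteq \mathsf{tcl}(\{x\})$, so that $f\upharpoonright\mathsf{tcl}(\{u\})$ is a witness, together with Lemma \ref{Th:UniquenessOfInverseAckermann}. A second point is the bound $k<2^k$, which follows by a routine induction.
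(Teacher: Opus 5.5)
Your existence half is the paper's argument: $\Pi_1\textsf{-Foundation}$ applied to the $\Pi_1$ class of natural numbers outside the range of $\mathcal{I}_{\mathsf{Ack}}$, the binary representation of the least such number, and $\Sigma_{\mathsf{u}}\textsf{-Replacement}$ to collect the unique preimages. The gap is in your uniqueness half, and the existence half depends on it.

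$\Pi_1\textsf{-Foundation}$ gives $\in$-minimal elements of nonempty $\Pi_1$ classes. The class of natural numbers $z$ with two distinct preimages, $\exists x_0 \exists x_1(x_0 \neq x_1 \land z = \mathcal{I}_{\mathsf{Ack}}(x_0) \land z = \mathcal{I}_{\mathsf{Ack}}(x_1))$, is $\Sigma_1$. Observing that its complement is $\Pi_1$ only gives you a least element of the complement, which does not help. Choosing ``$z$ least with two preimages'' is an instance of $\Sigma_1\textsf{-Foundation}$, and that is not an axiom of $\mathsf{ReR}$. Your first option fails for the same reason: the class of $x_0$ with a distinct partner of the same value is again $\Sigma_1$. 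Equivalently, $\in$-induction on the $\Pi_1$ property ``$x_0$ has no distinct partner'' is again $\Sigma_1\textsf{-Foundation}$. Since $\mathsf{ReR}$ only guarantees minimal elements for $\Pi_1$ classes, this step needs an actual argument.

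The fix is to localise to a set, where $\Delta_0\textsf{-Separation}$ and $\textsf{Set-Foundation}$ are enough.
Given $x_0 \neq x_1$ with common value $z$, let $f_i$ be the witnessing function on $t_i = \mathsf{tcl}(\{x_i\})$. By your bookkeeping observation and Lemma \ref{Th:UniquenessOfInverseAckermann}, $f_0$ and $f_1$ agree on $t_0 \cap t_1$. So $F = f_0 \cup f_1$ is a function on the transitive set $t = t_0 \cup t_1$, with $F(u) = \sum_{v \in u} 2^{F(v)}$ for all $u \in t$. The set $\{n \in z+1 \mid (\exists u \in t)(\exists v \in t)(u \neq v \land F(u) = n \land F(v) = n)\}$ exists by $\Delta_0\textsf{-Separation}$ and contains $z$, so it has a least element. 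Because $t$ is transitive, your argument only ever compares preimages lying in $t$, so it goes through unchanged with this least element in place of your $z$. Alternatively, you can derive the least number principle for $\Sigma_1$ classes of natural numbers from $\Pi_1\textsf{-Foundation}$ by the usual subtraction trick, using that addition is $\Delta_{\mathsf{u}}^{\mathsf{ReR}}$.

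Your worry about collisions is unnecessary. The paper defines $\sum_{v \in y} 2^{g(v)}$ through the image set $a$, so $\mathcal{I}_{\mathsf{Ack}}(x_i) = \sum_{k \in a_i} 2^k$ holds by definition. Lemma \ref{Th:BinaryRepresentation} then gives $a_0 = a_1$ without needing injectivity on $x_i$.
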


\begin{proof}
Work in the theory $\mathsf{ReR}^{\neg \infty}$. The uniqueness of $x$ follows from an induction argument utilising the uniqueness clause of Lemma \ref{Th:BinaryRepresentation}. 

To prove existence, suppose, for a contradiction, that the theorem is false. Using $\Pi_1\textsf{-Foundation}$, let $k$ be the least natural number such that $\neg \exists x (k= \mathcal{I}_{\mathsf{Ack}}(x))$. Using Lemma \ref{Th:BinaryRepresentation}, let $y$ be the set of natural numbers such that 
\[
k= \sum_{v \in y} 2^v.
\]
Our choice of $k$ ensures that $(\forall m \in y)\exists! u (m= \mathcal{I}_{\mathsf{Ack}}(u))$. Now, using $\Sigma_{\mathsf{u}}\textsf{-Replacement}$, let $x$ be a set such that for all $u$,
\[
u \in x \textrm{ if and only if } (\exists m \in y)(m= \mathcal{I}_{\mathsf{Ack}}(u)).
\]
This ensures that $k= \mathcal{I}_{\mathsf{Ack}}(x)$. \hfill$\square$   
\end{proof}

The bijection $\mathcal{I}_{\mathsf{Ack}}$ allows us to reduce an instance of $\Delta_0\textsf{-Collection}$ to an instance of $\Sigma_{\mathsf{u}}\textsf{-Replacement}$ showing that $\mathsf{ReR}^{\neg \infty}$ and $\mathsf{KP}^{\neg \infty}$ have the same consequences.

\begin{theorem} \label{Th:SameTheoryOfFiniteSets}
The theories $\mathsf{ReR}^{\neg \infty}$ and $\mathsf{KP}^{\neg \infty}$ have the same consequences.
\end{theorem}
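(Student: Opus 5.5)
One inclusion is immediate. $\mathsf{KP}^{\neg \infty}$ proves $\Delta_0\textsf{-Replacement}$: given $(\forall x \in u)\exists! y\,\phi(x,y,\vec{z})$ with $\phi$ in $\Delta_0$, apply $\Delta_0\textsf{-Collection}$ to get a set $b$ containing all the witnesses. Then $\Delta_0\textsf{-Separation}$ yields $\{y \in b \mid (\exists x \in u)\phi(x,y,\vec{z})\}$. Hence $\mathsf{KP}^{\neg \infty}$ proves every axiom of $\mathsf{ReR}^{\neg \infty}$. For the converse it suffices to show that $\mathsf{ReR}^{\neg \infty}$ proves every instance of $\Delta_0\textsf{-Collection}$, since $\Delta_0\textsf{-Separation}$ is already available in $\mathsf{ReR}_0$.

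Work in $\mathsf{ReR}^{\neg \infty}$ and assume $(\forall x \in a)\exists y\,\phi(x,y,\vec{z})$ with $\phi$ in $\Delta_0$. The idea is to make the witness unique by choosing the one with least Ackermann code. For $x \in a$, consider the class of natural numbers $m$ such that $\exists y\,(m=\mathcal{I}_{\mathsf{Ack}}(y) \land \phi(x,y,\vec{z}))$. It is nonempty, because $\mathcal{I}_{\mathsf{Ack}}$ is total. Using the bijection theorem just proved, it equals the class of $m$ for which $\exists! y\,(m=\mathcal{I}_{\mathsf{Ack}}(y))$ and that $y$ satisfies $\phi$. Since $z=\mathcal{I}_{\mathsf{Ack}}(x)$ is $\Delta_{\mathsf{u}}^{\mathsf{ReR}^{\neg \infty}}$, this is a $\Delta_{\mathsf{u}}$ condition on $m$. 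Its negation is therefore $\Sigma_{\mathsf{u}}$, and in particular $\Sigma_1$. The class of $m$ failing it is thus $\Sigma_1$, while its complement $\{m \mid \neg\psi(m)\}$ with $\psi$ in $\Sigma_{\mathsf{u}}$ is $\Pi_1$, so $\Pi_1\textsf{-Foundation}$ applies and yields a least such $m_x$. Write $y_x$ for the unique set with $m_x=\mathcal{I}_{\mathsf{Ack}}(y_x)$.

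Next I express ``$y=y_x$'' as a $\Sigma_{\mathsf{u}}$ formula in $x,y$. It says
\[
\phi(x,y,\vec{z}) \land \exists m\,\bigl(m=\mathcal{I}_{\mathsf{Ack}}(y) \land (\forall k \in m)\neg\exists u\,(k=\mathcal{I}_{\mathsf{Ack}}(u)\land\phi(x,u,\vec{z}))\bigr).
\]
Here $m$ is unique given $y$. The bounded clause $(\forall k \in m)$ applied to a $\Delta_{\mathsf{u}}$ formula stays $\Delta_{\mathsf{u}}$ by Lemma~\ref{Th:ClosurePropertiesDeltaU}(III). Conjunctions are handled by Lemma~\ref{Th:ClosurePropertiesDeltaU}(II), and the leading unique existential is absorbed using Lemma~\ref{Th:UniqueSigmaNormalisation}(I). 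Then $(\forall x \in a)\exists! y\,(y=y_x)$ holds, so $\Sigma_{\mathsf{u}}\textsf{-Replacement}$ produces a set $b=\{y_x \mid x\in a\}$, which witnesses the collection instance.

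The main obstacle is the careful complexity bookkeeping. The bounded universal over $k\in m$ of $\neg\exists u(\ldots)$ must first be rewritten as a $\Delta_{\mathsf{u}}$ formula before Lemma~\ref{Th:ClosurePropertiesDeltaU} applies. This uses the totality and injectivity of $\mathcal{I}_{\mathsf{Ack}}^{-1}$ to turn $\exists u$ into $\exists! u$, together with the fact that $\phi(x,u,\vec{z})$ is $\Delta_0$. One must also check that the foundation class has the right form, taking the class of natural numbers satisfying a $\Sigma_{\mathsf{u}}$ condition, whose defining formula, after complementation, is $\Pi_1$.
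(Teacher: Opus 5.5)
Your proof is correct and is essentially the paper's argument: you make the witness unique by taking the one of least Ackermann code and then apply ($\Sigma_{\mathsf{u}}$-)Replacement. The only difference is packaging. The paper collects the functions $f$ listing $\mathcal{I}_{\mathsf{Ack}}^{-1}(0), \ldots, \mathcal{I}_{\mathsf{Ack}}^{-1}(n_x)$, so that minimality becomes the clause $(\forall l \in n)\neg\phi(x, f(l), \vec{z})$, bounded in $f$, and then takes $\bigcup^3 c$; you collect the least witnesses themselves and get the needed complexity from Lemma~\ref{Th:ClosurePropertiesDeltaU}(III).

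One small correction. A $\Sigma_{\mathsf{u}}$ formula $\exists! w\, \rho$ is not in general $\Sigma_1$: its uniqueness clause is $\Pi_1$, and indeed every $\Pi_1$ formula is equivalent to a $\Sigma_{\mathsf{u}}$ one. So your appeal to $\Pi_1\textsf{-Foundation}$ should instead rest on the fact that the witness here is provably unique. This holds because $\mathcal{I}_{\mathsf{Ack}}$ is a bijection and the auxiliary witnesses are unique (cf.\ Lemma~\ref{Th:UniquenessOfInverseAckermann}). The uniqueness conjunct can then be dropped, leaving a genuine $\Sigma_1$ formula; the paper relies on the same point tacitly.
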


\begin{proof}
We only need to verify that $\mathsf{ReR}^{\neg \infty}$ proves $\Delta_0\textsf{-Collection}$. Towards this end, 
let $\phi(x, y, \vec{z})$ be a $\Delta_0$-formula. Let $b$ and $\vec{a}$ be sets such that $(\forall x \in b) \exists y \phi(x, y, \vec{a})$. Let $\theta(x, f, \vec{z})$ be the formula
\[
\begin{array}{c}
(f \textrm{ is a function}) \land \mathsf{dom}(f)= n+1 \land\\
(\forall k \in n+1)(k= \mathcal{I}_{\mathsf{Ack}}(f(k))) \land \phi(x, f(n), \vec{z}) \land (\forall l \in n) \neg \phi(x, f(l), \vec{z}) 
\end{array}.
\]
The properties of $\mathcal{I}_{\mathsf{Ack}}$ ensure that $(\forall x \in b) \exists! f \theta(x, f, \vec{a})$. Using $\Delta_0\textsf{-Replacement}$, let $c$ be such that for all $f$,
\[
f \in c \textrm{ if and only if } (\exists x \in b) \theta(x, f, \vec{a}).
\]
Let $d= \bigcup^3 c$. Then $(\forall x \in b) (\exists y \in d) \phi(x, y, \vec{a})$. This shows that $\mathsf{ReR}^{\neg \infty}$ proves $\Delta_0\textsf{-Collection}$. \hfill$\square$   
\end{proof}

In particular, the proof of Theorem \ref{Th:SameTheoryOfFiniteSets} yields:

\begin{corollary} \label{Th:EverySetInBijectionWithSetOfNumbers}
($\mathsf{ReR}^{\neg \infty}$) For all $x$, there exists a set of natural numbers $y$ such that $|x|=|y|$. \hfill$\square$
\end{corollary}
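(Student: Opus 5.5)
The plan is to use $\mathcal{I}_{\mathsf{Ack}}$ restricted to $x$ as the bijection. Given a set $x$, every element $u \in x$ has exactly one natural number $m$ with $m = \mathcal{I}_{\mathsf{Ack}}(u)$, by the theorem establishing totality and uniqueness of $\mathcal{I}_{\mathsf{Ack}}$. Since $z = \mathcal{I}_{\mathsf{Ack}}(u)$ is $\Sigma_{\mathsf{u}}$ (indeed $\Delta_{\mathsf{u}}^{\mathsf{ReR}^{\neg\infty}}$), we can apply $\Sigma_{\mathsf{u}}\textsf{-Replacement}$ twice.

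First, apply $\Sigma_{\mathsf{u}}\textsf{-Replacement}$ to the formula $\exists! g\, \theta(g,u)$ used in the existence proof for $\mathcal{I}_{\mathsf{Ack}}$. Equivalently, apply it to the pair $\langle u, m\rangle$ with $m = \mathcal{I}_{\mathsf{Ack}}(u)$, which is uniquely determined from $u$ by a $\Sigma_{\mathsf{u}}$-formula after pairing as in Lemma \ref{Th:UniqueSigmaNormalisation}(I). This yields a set $h$ such that $w \in h$ if and only if $(\exists u \in x)(w = \langle u, \mathcal{I}_{\mathsf{Ack}}(u)\rangle)$. Then $h$ is a function with domain $x$, since $\mathcal{I}_{\mathsf{Ack}}$ is single-valued. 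Let $y = \mathsf{rng}(h)$, which exists by $\Delta_0\textsf{-Separation}$ applied to $\bigcup\bigcup h$. Then $y$ is a set of natural numbers.

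Injectivity of $h$ follows from the uniqueness clause of the theorem that for each natural number $z$ there is a unique $x$ with $z = \mathcal{I}_{\mathsf{Ack}}(x)$. So $h$ is a bijection from $x$ onto $y$, giving $|x| = |y|$.

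The only potential obstacle is the bookkeeping needed to cast $u \mapsto \langle u, \mathcal{I}_{\mathsf{Ack}}(u)\rangle$ as an instance of $\Sigma_{\mathsf{u}}\textsf{-Replacement}$ in the form $\exists! y \exists! w\, \phi$. This is handled exactly as in the proof of the totality theorem, by packing the witness triple $(f, \mathsf{tcl}(\{u\}), z)$ into one ordered pair, which is $\Delta_0$ with substitutable projections. Alternatively, one can read $h$ off directly from the set $c$ produced in the proof of Theorem \ref{Th:SameTheoryOfFiniteSets}, but the direct route above is cleaner.
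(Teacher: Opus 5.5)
Your argument is correct and is essentially the one the paper intends. The paper only remarks that the proof of Theorem~\ref{Th:SameTheoryOfFiniteSets} yields the corollary, meaning you apply ($\Sigma_{\mathsf{u}}$-)replacement to $u \mapsto \mathcal{I}_{\mathsf{Ack}}(u)$ on $x$ and get injectivity from the uniqueness half of the inverse theorem. You do exactly this, directly rather than by reading the graph off the set $c$ built in that proof. The only caveat is one the paper's own totality proof shares: $\Psi_{\mathsf{IA}}$ is $\Delta_{\mathsf{u}}^{\mathsf{ReR}^{\neg\infty}}$ rather than literally $\Delta_0$, so your ``packing'' step must go through its $\Sigma_{\mathsf{u}}$ normal form. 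Also, only one application of replacement is needed, not two.
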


The following result was proved by Vop\v{e}nca \cite{vop64} in the context of weak set theory including full \textsf{Replacement}. Our argument follows the proof of \cite[Proposition 2.13]{mat06}. 

\begin{lemma} \label{Th:PowersetsOfNumbersExist}
($\mathsf{ReR}$) For all natural numbers $n$, the powerset of $n$ exists.
\end{lemma}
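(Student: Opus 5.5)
We argue by $\Pi_1\textsf{-Foundation}$ on the natural numbers. Suppose the lemma fails. The class $\{n \mid n \textrm{ is a natural number} \land \neg\exists p\,(p = \mathcal{P}(n))\}$ is $\Pi_1$, because ``$p=\mathcal{P}(n)$'' is expressible as
\[
(\forall u \in p)(u \subseteq n) \land \forall u\,(u \subseteq n \Rightarrow u \in p).
\]
As written this is $\Pi_1$, so its existential closure is $\Sigma_2$, and that is the wrong complexity for foundation. We therefore replace it by a $\Delta_0$ surrogate. Let $\chi(p,n)$ say that $p$ is a set of subsets of $n$, $\emptyset \in p$, and
\[
(\forall u \in p)(\forall k \in n)\left(u \cup \{k\} \in p \land u \setminus\{k\} \in p\right).
\]
The claim is that if $\chi(p,n)$, then $p=\mathcal{P}(n)$.

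To prove the claim, fix $u \subseteq n$. We show by induction on $m \le n$ that $u \cap m \in p$, using $\Delta_0$-foundation on $m$. The set $u\cap m$ exists by $\Delta_0\textsf{-Separation}$. For the base case, $u \cap 0=\emptyset\in p$. For the successor step, $u\cap(m+1)$ equals either $(u\cap m)\cup\{m\}$ or $u\cap m$, and in both cases it lies in $p$. The class of $m\in n+1$ with $u\cap m\notin p$ is a set by $\Delta_0\textsf{-Separation}$, so \textsf{Set-Foundation} gives a minimal element, which yields a contradiction. Thus $u = u\cap n \in p$. Hence ``$\mathcal{P}(n)$ exists'' is equivalent to $\exists p\,\chi(p,n)$, and the failure class $\{n \mid n \in \omega \land \neg \exists p\, \chi(p,n)\}$ is $\Pi_1$.

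By $\Pi_1\textsf{-Foundation}$, take a least $n$ in this class. Then $n\neq 0$, since $\{\emptyset\}$ works for $0$. Because $n$ is a natural number, $n=m+1$, and by minimality $q=\mathcal{P}(m)$ exists. We build $\mathcal{P}(m+1)$ as $q \cup q'$, where $q'=\{u\cup\{m\} \mid u\in q\}$. The set $q'$ exists by $\Delta_0\textsf{-Replacement}$ applied to the map $u\mapsto u\cup\{m\}$. This map is $\Delta_0$-definable and single-valued: the condition is $y = u\cup\{m\}$, and the value exists by \textsf{Pair} and \textsf{Union}. The union $q\cup q'$ exists by \textsf{Pair} and \textsf{Union}.

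It remains to check $\chi(q\cup q', m+1)$. Every element of $q\cup q'$ is a subset of $m+1$, and $\emptyset \in q$. For closure under adding or removing $k\in m+1$, we split into cases according to whether $k=m$ or $k<m$, and whether the set lies in $q$ or in $q'$. Each case reduces to the closure of $q=\mathcal{P}(m)$, together with the observation that $(u\cup\{m\})\setminus\{m\}=u$ when $u\subseteq m$. This contradicts the choice of $n$, which proves the lemma.

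The main obstacle is the complexity issue: the naive statement ``$\mathcal{P}(n)$ exists'' is $\Sigma_2$, and the work lies in showing that the $\Delta_0$ closure characterization $\chi$ pins down $\mathcal{P}(n)$ using only $\Delta_0$-foundation inside $n+1$. An alternative is to index subsets of $n$ by binary sequences, but the closure characterization seems the cleanest way to keep within $\Pi_1\textsf{-Foundation}$.
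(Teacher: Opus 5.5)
Your proof is correct and follows essentially the same route as the paper. Both arguments apply $\Pi_1\textsf{-Foundation}$ on the natural numbers to a $\Delta_0$-witnessed existence statement, and both obtain the successor step $\mathcal{P}(m+1)=\mathcal{P}(m)\cup\{u\cup\{m\}\mid u\in\mathcal{P}(m)\}$ from $\Delta_0\textsf{-Replacement}$.

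The only difference is the witness. The paper carries the whole recursion sequence $g$, with $g(0)=\{\emptyset\}$ and $g(k+1)=g(k)\cup F(k,g(k))$. You use your closure characterization $\chi$ of $\mathcal{P}(n)$ itself, which makes $p\subseteq\mathcal{P}(n)$ automatic and leaves only the \textsf{Set-Foundation} induction $u\cap m\in p$. The removal clause in $\chi$ is not actually needed for that induction.
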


\begin{proof}
Work in the theory $\mathsf{ReR}$. For all $x$ and $y$, define $F(x, y)=\{z \cup \{x\} \mid z \in y\}$. Note that $\Delta_0\textsf{-Replacement}$ ensure that for all $x$ and $y$, $F(x, y)$ exists. Now, consider the $\Delta_0$-formula $\phi(g, n)$ defined by
\[
\begin{array}{c}
(g \textrm{ is a function}) \land \mathsf{dom}(g)=n+1 \land\\
g(0)= \{\emptyset\} \land (\forall k \in n)(g(k+1)= g(k) \cup F(k, g(k)))
\end{array}.
\]
Now, $\Pi_1\textsf{-Foundation}$ ensures that for all natural numbers $n$, there exists $g$ such that $\phi(g, n)$ holds. Since for all natural numbers $n$, if $g$ is such that $\phi(g, n)$ holds, then $g(n)$ is the powerset of $n$, this proves the lemma. \hfill$\square$
\end{proof}

\begin{theorem} \label{Th:PowersetInReR}
($\mathsf{ReR}^{\neg \infty}$) \textsf{Powerset} \hfill$\square$
\end{theorem}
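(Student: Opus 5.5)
Given a set $x$, we transfer to a natural number via Corollary \ref{Th:EverySetInBijectionWithSetOfNumbers}, take its powerset using Lemma \ref{Th:PowersetsOfNumbersExist}, and transfer back along the bijection using $\Delta_0$\textsf{-Replacement}.

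First fix a set $y$ of natural numbers and a bijection $h\colon y \to x$, both supplied by Corollary \ref{Th:EverySetInBijectionWithSetOfNumbers}. By Lemma \ref{Th:EverySetOfOrdinalsHasMaximum} (in $\mathsf{ReR}^{\neg\infty}$), if $y\neq\emptyset$ then $m=\bigcup y\in y$ is a natural number and $y\subseteq m+1$. Put $n=m+1$; the case $y=\emptyset$ is trivial, since then $x=\emptyset$ and $\{\emptyset\}$ is its powerset. Lemma \ref{Th:PowersetsOfNumbersExist} gives the set $p=\mathcal{P}(n)$, and $\Delta_0$\textsf{-Separation} gives $q=\{s\in p \mid s\subseteq y\}$. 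Since every subset of $y$ is a subset of $n$, $q=\mathcal{P}(y)$.

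Next transport along $h$. The formula $\phi(s,t,h)$ saying
\[
(\forall u\in t)(\exists v\in s)(\langle v,u\rangle\in h)\land(\forall v\in s)(\exists u\in t)(\langle v,u\rangle\in h)
\]
is $\Delta_0$: the quantifier ``$\exists u$'' can be bounded by $\bigcup\bigcup h$, or we can rephrase via $\mathsf{fst}$ and $\mathsf{snd}$ of elements of $h$, which is substitutable. For each $s\in q$ there is exactly one $t$ with $\phi(s,t,h)$, namely $h[s]$. Existence is one application of $\Delta_0$\textsf{-Replacement} applied to $s$, using the $\Delta_0$ formula ``$w=\mathsf{snd}(z)$ for the unique $z\in h$ with $\mathsf{fst}(z)=v$''. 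Uniqueness follows from \textsf{Extensionality}. So $\Delta_0$\textsf{-Replacement} applied to $q$ yields $r=\{h[s]\mid s\in q\}$.

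It remains to check that $r=\mathcal{P}(x)$. Clearly every $h[s]\subseteq x$. Conversely, given $t\subseteq x$, the set $s=\{v\in y \mid (\exists z\in h)(\mathsf{fst}(z)=v\land \mathsf{snd}(z)\in t)\}$ exists by $\Delta_0$\textsf{-Separation}, and $h[s]=t$ because $h$ is surjective onto $x$. Hence $t\in r$.

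The main point to verify is that Corollary \ref{Th:EverySetInBijectionWithSetOfNumbers} really provides the bijection as a set; this comes from the function $\theta$ in the proof of Theorem \ref{Th:SameTheoryOfFiniteSets}, that is, from $\mathcal{I}_{\mathsf{Ack}}$ restricted to $x$, collected by $\Sigma_{\mathsf{u}}$\textsf{-Replacement}. The remaining steps are routine $\Delta_0$ bookkeeping.
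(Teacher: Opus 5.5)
Your proposal is correct and follows essentially the same route as the paper: pass to a set of natural numbers via Corollary \ref{Th:EverySetInBijectionWithSetOfNumbers}, bound it by some $n$, cut $\mathcal{P}(y)$ out of $\mathcal{P}(n)$ using Lemma \ref{Th:PowersetsOfNumbersExist} and $\Delta_0$\textsf{-Separation}, and transport back along the bijection with $\Delta_0$\textsf{-Replacement}. You also make explicit two steps the paper leaves to the reader: the existence of $n$ (via Lemma \ref{Th:EverySetOfOrdinalsHasMaximum}) and the bookkeeping of the transport.
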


\begin{proof}
Work in the theory $\mathsf{ReR}^{\neg \infty}$. Let $x$ be a set. Using Corollary \ref{Th:EverySetInBijectionWithSetOfNumbers}, let $y$ be a set of natural numbers such that $|x|=|y|$ and let $f: x \longrightarrow y$ witness this bijection. Let $n$ be a natural number such that $y \subseteq n$. Note that $\mathcal{P}(y)= \{ z \in \mathcal{P}(n) \mid z \subseteq y\}$, which is a set by Lemma \ref{Th:PowersetsOfNumbersExist} and $\Delta_0\textsf{-Separation}$. Now, $\mathcal{P}(x)$ can be recovered from $\mathcal{P}(y)$ using that bijection $f$ and $\Delta_0\textsf{-Replacement}$. \hfill$\square$ 
\end{proof}

In particular, Theorem \ref{Th:PowersetInReR} shows that $\mathsf{ReR}^{\neg \infty}$ proves that for every $x$, the set of all finite subsets of $x$ is a set. Combined with \cite[Proposition 2.103]{mat06}, this provides a positive answer to \cite[Problem 8.26]{mat06}.

\begin{theorem}
($\mathsf{ReR}$) For all $x$, the set of finite subsets of $x$ is a set. \hfill$\square$.
\end{theorem}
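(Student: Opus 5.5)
We need to prove, in $\mathsf{ReR}$ (with or without infinity), that for every $x$ the collection of finite subsets of $x$ is a set. The paper indicates this combines Theorem \ref{Th:PowersetInReR} with Mathias's \cite[Proposition 2.103]{mat06}. I would carry out the argument directly, splitting on whether $\textsf{Infinity}$ holds. Throughout, ``finite'' means in bijection with a natural number; this is a $\Sigma_1$ property, so we cannot separate it out of a bigger set naively. We therefore build the set by recursion along the natural numbers instead.

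The plan is to imitate Lemma \ref{Th:PowersetsOfNumbersExist}. For a set $x$ and natural number $n$, let $[x]^{\le n}$ denote the set of subsets of $x$ of size at most $n$. Set $G(x,y)=\{z\cup\{u\}\mid z\in y,\ u\in x\}$. To see that $G(x,y)$ exists, apply Lemma \ref{Th:CartesianProduct} to obtain $y\times x$, and then $\Delta_0\textsf{-Replacement}$ along the $\Delta_0$ map $\langle z,u\rangle\mapsto z\cup\{u\}$. Now let $\phi(g,n,x)$ be the $\Delta_0$-formula saying that $g$ is a function with domain $n+1$, $g(0)=\{\emptyset\}$, and $g(k+1)=g(k)\cup G(x,g(k))$ for all $k\in n$. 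Here $w=G(x,v)$ is expressed boundedly: every element of $w$ has the form $z\cup\{u\}$ with $z\in v$ and $u\in x$, and conversely.

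Uniqueness of such $g$ follows by a straightforward induction, as in Lemma \ref{Th:FiniteApproximationsOfTCl}. Existence for every $n$ follows from $\Pi_1\textsf{-Foundation}$ applied to the class $\{n\mid n\in\omega\wedge\neg\exists g\,\phi(g,n,x)\}$. Its least element cannot be $0$, and cannot be $k+1$, since a witness for $k$ extends by one step. A further induction, again via $\Pi_1$- or even $\Delta_0$-foundation on naturals, shows that $g(n)=[x]^{\le n}$. This uses that a set of size $m+1$ has the form $z\cup\{u\}$ with $|z|=m$.

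If $\textsf{Infinity}$ fails, Theorem \ref{Th:PowersetInReR} already gives $\mathcal{P}(x)$, and every subset is finite by Corollary \ref{Th:EverySetInBijectionWithSetOfNumbers}. So assume $\omega$ exists. The map $n\mapsto g_n$, where $g_n$ is the unique witness of $\phi(g,n,x)$, satisfies $(\forall n\in\omega)\exists! g\,\phi(g,n,x)$. By $\Delta_0\textsf{-Replacement}$ there is a set $c=\{g_n\mid n\in\omega\}$. Then the set of finite subsets of $x$ is $\bigcup\{g_n(n)\mid n\in\omega\}$. Since every $g_n(n)$ is the last value of $g_n$, this equals $\bigcup\mathsf{rng}(\bigcup c)$, which exists by \textsf{Union} together with $\Delta_0$-definable range via replacement.

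The main obstacle is verifying that every finite subset of $x$ appears in some $g_n(n)$, and that every element of $g_n(n)$ is genuinely finite. Both are inductions on $n$ over a property of the form ``$\exists$ bijection'' or ``$\forall$ subset of size $n$ lies in $g_n(n)$''. These are $\Sigma_1$ and $\Pi_1$ statements respectively, and we need to check they fall within $\Pi_1\textsf{-Foundation}$. For the first property, I would first phrase it as: there is no bijection counterexample. For the second property, I would use that the bijection witnesses can be bounded, since bijections between $z$ and $n$ lie in $\mathcal{P}(z\times n)$, but powerset is unavailable. So instead I would argue the $\Sigma_1$ direction by exhibiting the witness bijection explicitly in the recursion: let $g$ store pairs $\langle z,h\rangle$ with $h\colon z\to m$ a bijection. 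This makes finiteness of the members $\Delta_0$-verifiable from $c$.
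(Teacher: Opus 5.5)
Your proposal is correct, and its overall shape matches the paper's. The paper also splits on \textsf{Infinity} and obtains the $\neg\textsf{Infinity}$ case from Theorem~\ref{Th:PowersetInReR}. For the case where $\omega$ exists it gives no argument of its own, simply deferring to \cite[Proposition 2.103]{mat06}. The genuine difference is that you prove that second case directly: the recursion $g(k+1)=g(k)\cup G(x,g(k))$ modelled on Lemma~\ref{Th:PowersetsOfNumbersExist}, existence of each $g_n$ by $\Pi_1\textsf{-Foundation}$, and one application of $\Delta_0\textsf{-Replacement}$ over $\omega$. This makes the result self-contained. It also shows that the infinite case needs none of the $\mathsf{TCo}$/Ackermann machinery, which the paper needs only because, without $\omega$, there is no set over which to replace.

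There are two small points of bookkeeping. In the $\neg\textsf{Infinity}$ case, Corollary~\ref{Th:EverySetInBijectionWithSetOfNumbers} gives a bijection with a set of natural numbers. You still need the routine step that such a set, bounded by Lemma~\ref{Th:EverySetOfOrdinalsHasMaximum}, is equinumerous with a natural number. In the infinite case, carrying the pairs $\langle z,h\rangle$ through the recursion correctly turns ``every member is finite'' into a $\Delta_0$ invariant. For the converse, do not induct on the $\Pi_1$ statement ``every subset of $x$ of size $m$ lies in $g_m(m)$'', because its failure class is $\Sigma_1$, which $\Pi_1\textsf{-Foundation}$ does not directly cover. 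Instead fix $z\subseteq x$ and a bijection $h\colon z\to m$. Then show by $\Delta_0$-induction on $k\le m$, with $z$, $h$ and $g_m$ as parameters, that $h^{-1}[k]$ occurs in $g_m(k)$, either as a member or as a first coordinate in your modified recursion.
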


\section[Model in which TCo fails]{A model of bounded collection in which $\mathsf{TCo}$ fails} \label{Sec:ModelInWhichTCoFails}

In this section we build a model of a significant fragment of \textsf{Collection} ($\Delta_0^{\mathcal{P}}\textsf{-Collection}$) in which the axiom of transitive containment fails. This shows that transitive containment is not provable in $\mathsf{ReR}_0+\textsf{Set-Foundation}$ or in the theory obtained by weakening $\Pi_1\textsf{-Foumdation}$ to $\textsf{Set-Foundation}$ in $\mathsf{KP}$. Our construction is based on the construction of a model of $\mathsf{Z}+\textsf{Foundation}$ presented in \cite[\S 12]{mat06}. To keep things simple, we will work in the theory $\mathsf{ZFC}+\mathsf{Con}(\mathsf{ZFC})$, but it is clear that a much weaker metatheory would suffice. We use $\iota$ to denote the operation $x \mapsto \{x\}$. Define $\iota^0 x= x$ and for all $n \in \omega$, $\iota^{n+1} x= \iota \iota^n x$.

Let $\mathcal{M}= \langle M, \in^{\mathcal{M}} \rangle$ be a model of $\mathsf{ZFC}$ with nonstandard natural numbers. Let $k_0 \in (\omega^{\mathcal{M}})^*$ be nonstandard. Let $c \in M$ be such that
\[
\mathcal{M} \models (c= \{\iota^m \emptyset \mid m \geq k_0\}).
\]
Working within $\mathcal{M}$, define $K_0= \omega \cup c$ and for all $n \in \omega$,
\[
K_{n+1}= K_n\cup \mathcal{P}(K_n) \cup \left(\bigcup K_n \right).
\]
Working in the metatheory, define $\mathcal{K}= \langle K, \in^{\mathcal{K}}\rangle$ by
\[
K= \bigcup_{n \in \omega} (K_n^{\mathcal{M}})^* \textrm{ and } \in^{\mathcal{K}} \textrm{ is the restriction of } \in^{\mathcal{M}} \textrm{ to } K.
\]
Note that the union in the above definition is taken over the standard natural numbers.

\begin{lemma} \label{Th:KSuperTransitiveSubstructure}
$\mathcal{K} \subseteq_{\mathsf{tr}}^{\mathcal{P}} \mathcal{M}$.
\end{lemma}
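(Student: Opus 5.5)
We need to show that $K$ is closed downward under $\in^{\mathcal{M}}$ and under $\subseteq^{\mathcal{M}}$. The plan is to reduce both closure properties to facts about the sequence $\langle K_n \mid n \in \omega\rangle$ that hold inside $\mathcal{M}$, and then transfer them to the metatheory. Since $\mathcal{M} \models \mathsf{ZFC}$, the recursion defining the $K_n$ can be carried out inside $\mathcal{M}$. So there is $s \in M$ with $\mathcal{M} \models$ ``$s$ is a function with domain $\omega$, $s(0)=\omega\cup c$ and $s(n+1)= s(n)\cup\mathcal{P}(s(n))\cup\bigcup s(n)$''. For standard $n$, the set $K_n^{\mathcal{M}}$ is $s(\bar n)$, where $\bar n$ is the $\mathcal{M}$-version of $n$. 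Note that $K$ is only the union over standard $n$, so any argument must produce, for each $x\in K_n^*$, a witness in some $K_m^*$ with $m$ standard; in fact we will use $m=n+1$.

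First, the transitive substructure condition. Let $x \in K$, say $x \in (K_n^{\mathcal{M}})^*$ with $n$ standard, and let $y \in M$ with $\mathcal{M}\models y \in x$. Inside $\mathcal{M}$ we have $x \in K_n$ and $y \in x$, so $y \in \bigcup K_n \subseteq K_{n+1}$. Hence $y \in (K_{n+1}^{\mathcal{M}})^* \subseteq K$. This is immediate from the $\bigcup K_n$ clause. Similarly, for the supertransitive condition, suppose $y\in M$ with $\mathcal{M}\models y\subseteq x$. We need $y \subseteq K_n$ inside $\mathcal{M}$, and then $y \in \mathcal{P}(K_n)\subseteq K_{n+1}$.

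The key auxiliary claim is therefore that $\mathcal{M}\models$ ``$K_n$ is transitive'' for each standard $n$; in fact the sequence is transitive at every stage from $1$ on. Stage $0$ is not transitive: $K_0=\omega\cup c$, and an element $\iota^m\emptyset$ with $m\ge k_0$ has $\iota^{m-1}\emptyset$ as element, which for $m=k_0$ is outside $c$. It may also be outside $\omega$ unless $m-1\le 1$, which fails since $k_0$ is nonstandard. So the argument must not rely on transitivity of $K_0$.

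The way around this is to avoid transitivity altogether. If $x\in K_n$ and $y\subseteq x$, then every $z\in y$ lies in $x$, hence in $\bigcup K_n\subseteq K_{n+1}$. So $y\subseteq K_{n+1}$ and $y\in\mathcal{P}(K_{n+1})\subseteq K_{n+2}$, with $n+2$ still standard. This argument is carried out inside $\mathcal{M}$ for the fixed standard $n$, using only the definition of $K_{n+1},K_{n+2}$ and $K_n\subseteq K_{n+1}\subseteq K_{n+2}$. Both conditions of $\mathcal{K}\subseteq^{\mathcal{P}}_{\mathsf{tr}}\mathcal{M}$ then follow.

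The only delicate point, and the expected obstacle, is the bookkeeping between the metatheory and $\mathcal{M}$. One must make sure that $(K_n^{\mathcal{M}})^*$ is taken for standard $n$ only, and that the inclusions $K_n\subseteq K_{n+1}$ and the membership facts used are statements true in $\mathcal{M}$ about the specific sets $K_n^{\mathcal{M}}$. They then transfer to extensions via the definition of $a^*$ and the fact that $\in^{\mathcal{K}}$ is the restriction of $\in^{\mathcal{M}}$. Since each step moves from index $n$ to $n+1$ or $n+2$, we never leave the standard indices, which is exactly what keeps the witnesses inside $K$.
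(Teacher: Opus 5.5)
Your argument is correct and is essentially the paper's proof: the $\bigcup K_n \subseteq K_{n+1}$ clause gives transitivity, and $y \subseteq x \subseteq \bigcup K_n \subseteq K_{n+1}$ gives $y \in \mathcal{P}(K_{n+1}) \subseteq K_{n+2}$, with the index staying standard. One aside is false, though harmless because you abandon it: no $K_n$ with $n$ standard is transitive (e.g.\ $\iota^{k_0-1}\emptyset \in \bigcup K_0 \subseteq K_1$ but $\iota^{k_0-2}\emptyset \notin K_1$), and if $K_1$ were transitive then $K_1^{\mathcal{M}} \in K$ would be a transitive superset of $c$ in $\mathcal{K}$, which is exactly what the construction is meant to prevent.
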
 

\begin{proof}
To see that $\mathcal{K} \subseteq_{\mathsf{tr}} \mathcal{M}$, let $x \in K$ and let $y \in M$ be such that $\mathcal{M} \models (y \in x)$. Let $n \in \omega$ be such that $\mathcal{M} \models (x \in K_n)$. So,
\[
\mathcal{M} \models \left( y \in \bigcup K_n \subseteq K_{n+1} \right),
\]
which shows that $y \in K$.

Now, let $x \in K$ and let $y \in M$ be such that $\mathcal{M} \models (y \subseteq x)$. Let $n \in \omega$ be such that $\mathcal{M} \models (x \in K_n)$. Now,
\[
\mathcal{M} \models \left( y \subseteq x \subseteq \bigcup K_n \subseteq K_{n+1}\right).
\] 
So, $\mathcal{M} \models (y \in \mathcal{P}(K_{n+1}) \subseteq K_{n+2})$ and $y \in K$. \hfill$\square$ 
\end{proof} 

\begin{lemma} \label{Th:PowersetsInK}
For all $x \in K$, $\mathcal{P}^{\mathcal{M}}(x) \in K$.
\end{lemma}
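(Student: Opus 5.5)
Fix $x \in K$ and choose a standard $n \in \omega$ with $\mathcal{M} \models (x \in K_n)$. The plan is to show that, inside $\mathcal{M}$, the set $\mathcal{P}(x)$ is a member of $K_{n+3}$. Since $n+3$ is still standard, this gives $\mathcal{P}^{\mathcal{M}}(x) \in (K_{n+3}^{\mathcal{M}})^* \subseteq K$.

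All the reasoning is internal to $\mathcal{M}$, which satisfies $\mathsf{ZFC}$, so we may use full set theory there. We argue in three steps.

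\begin{enumerate}
\item From $x \in K_n$ we get $x \subseteq \bigcup K_n \subseteq K_{n+1}$. This is exactly the computation in the proof of Lemma \ref{Th:KSuperTransitiveSubstructure}.
\item Every subset of $x$ is therefore a subset of $K_{n+1}$, so it lies in $\mathcal{P}(K_{n+1}) \subseteq K_{n+2}$. Hence $\mathcal{P}(x) \subseteq K_{n+2}$.
\item Consequently $\mathcal{P}(x) \in \mathcal{P}(K_{n+2}) \subseteq K_{n+3}$.
\end{enumerate}

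Each inclusion $K_m \cup \mathcal{P}(K_m) \cup \bigcup K_m \subseteq K_{m+1}$ holds in $\mathcal{M}$ for every $m$ directly from the definition of the sequence. We only ever apply it at the standard indices $m = n, n+1, n+2$, so no induction over nonstandard indices is needed.

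The one point that needs care is that $\mathcal{P}^{\mathcal{M}}(x)$ denotes the powerset computed in $\mathcal{M}$, and membership in $K$ is read off from $\mathcal{M}$'s extension of $K_{n+3}$. Both are handled by working entirely inside $\mathcal{M}$ and only at the end passing to $(K_{n+3}^{\mathcal{M}})^*$. There is no real obstacle here; the argument is a routine bookkeeping of standard indices, in the same style as Lemma \ref{Th:KSuperTransitiveSubstructure}.

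Combined with that lemma, this result says that $\mathcal{K}$ computes powersets correctly, as $\mathcal{P}^{\mathcal{K}}(x) = \mathcal{P}^{\mathcal{M}}(x)$. This is presumably how \textsf{Powerset} in $\mathcal{K}$ will be derived.
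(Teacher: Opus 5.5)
Your proof is correct and is the paper's argument: the paper uses the computation from the proof of Lemma \ref{Th:KSuperTransitiveSubstructure} to get $\mathcal{M} \models (\mathcal{P}(x) \subseteq K_{n+2})$, and then concludes $\mathcal{M} \models (\mathcal{P}(x) \in K_{n+3})$, hence $\mathcal{P}^{\mathcal{M}}(x) \in K$. You also correctly note that only the standard indices $n, n+1, n+2$ are used, which is what makes the final step into $K$ legitimate.
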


\begin{proof}
Let $x \in K$. Let $n \in \omega$ be such that $\mathcal{M} \models (x \in K_n)$. The proof of Lemma \ref{Th:KSuperTransitiveSubstructure} shows that $\mathcal{M} \models (\mathcal{P}(x) \subseteq K_{n+2})$. Therefore, $\mathcal{M} \models (\mathcal{P}(x) \in K_{n+3})$ and $\mathcal{P}^{\mathcal{M}}(x) \in K$. \hfill$\square$ 
\end{proof}

\begin{lemma} \label{Th:UnionsInK}
For all $x \in K$, $\bigcup x \in K$.
\end{lemma}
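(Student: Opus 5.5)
The plan is to copy the proof of Lemma \ref{Th:PowersetsInK}: first find a stage $K_m$ that contains every element of $x$, and then use the powerset clause in the definition of $K_{m+1}$ to place $\bigcup x$ in $K$.

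First, let $x \in K$ and fix a standard $n \in \omega$ with $\mathcal{M} \models (x \in K_n)$. Working inside $\mathcal{M}$, every $y \in x$ satisfies $y \in \bigcup K_n \subseteq K_{n+1}$. Hence $\mathcal{M} \models (x \subseteq K_{n+1})$. It follows that every element of every element of $x$ lies in $\bigcup K_{n+1} \subseteq K_{n+2}$, so
\[
\mathcal{M} \models \left(\bigcup x \subseteq \bigcup K_{n+1} \subseteq K_{n+2}\right).
\]
Second, the definition of $K_{n+3}$ includes $\mathcal{P}(K_{n+2})$, so $\mathcal{M} \models (\bigcup x \in \mathcal{P}(K_{n+2}) \subseteq K_{n+3})$. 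Since $n+3$ is standard, this gives $\bigcup^{\mathcal{M}} x \in (K_{n+3}^{\mathcal{M}})^* \subseteq K$.

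We also need that $\bigcup$ computed in $\mathcal{M}$ is the union in $\mathcal{K}$. Being $\bigcup x$ is a $\Delta_0$ property, and Lemma \ref{Th:KSuperTransitiveSubstructure} gives $\mathcal{K} \subseteq_{\mathsf{tr}} \mathcal{M}$, so this property is absolute between $\mathcal{K}$ and $\mathcal{M}$.

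The argument has no real obstacle. The only point needing care is that every index used stays standard, so that the union defining $K$ (which ranges over standard $n$ only) captures $\bigcup x$. The transitivity and subset arguments from Lemma \ref{Th:KSuperTransitiveSubstructure} could be reused directly, but the direct computation above is shorter.
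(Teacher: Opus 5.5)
Your proposal is correct and follows essentially the same argument as the paper: $x \subseteq \bigcup K_n \subseteq K_{n+1}$, hence $\bigcup x \subseteq \bigcup K_{n+1} \subseteq K_{n+2}$, and so $\bigcup x \in \mathcal{P}(K_{n+2}) \subseteq K_{n+3}$. Your closing remark that the $\mathcal{M}$-union is also the $\mathcal{K}$-union, because $\mathcal{K} \subseteq_{\mathsf{tr}} \mathcal{M}$, is a correct detail that the paper leaves implicit.
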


\begin{proof}
Let $x \in K$. Let $n \in \omega$ be such that $\mathcal{M} \models (x \in K_n)$. Now,
\[
\mathcal{M} \models \left( x \subseteq \bigcup K_n \subseteq K_{n+1} \right) \textrm{ and } \mathcal{M} \models \left( \bigcup x \subseteq \bigcup K_{n+1} \subseteq K_{n+2}\right).
\] 
Therefore, $\mathcal{M} \models \left( \bigcup x \in K_{n+3} \right)$ and $\bigcup x \in K$. \hfill$\square$
\end{proof}

\begin{lemma} \label{Th:PairingInK}
For all $x, y \in K$, $\{x, y\} \in K$.
\end{lemma}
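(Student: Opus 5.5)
The plan is to follow the same pattern as Lemmas \ref{Th:PowersetsInK} and \ref{Th:UnionsInK}: place $\{x,y\}^{\mathcal{M}}$ inside some $K_m$ by computing in $\mathcal{M}$. Let $x, y \in K$, and fix $n_0, n_1 \in \omega$ with $\mathcal{M} \models (x \in K_{n_0})$ and $\mathcal{M} \models (y \in K_{n_1})$.

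First, put $n = \max(n_0, n_1)$, a standard natural number. The sequence $\langle K_k \mid k \in \omega\rangle$ is increasing in $\mathcal{M}$, since $K_{k+1} \supseteq K_k$ by definition. Hence $\mathcal{M} \models (x, y \in K_n)$.

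Next, working in $\mathcal{M}$, we have $\{x,y\} \subseteq K_n$. By definition $K_{n+1} \supseteq \mathcal{P}(K_n)$, so $\mathcal{M} \models (\{x, y\} \in \mathcal{P}(K_n) \subseteq K_{n+1})$.

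Since $n+1$ is standard, this gives $\{x,y\}^{\mathcal{M}} \in (K_{n+1}^{\mathcal{M}})^* \subseteq K$. Because $\mathcal{K}$ is a transitive substructure of $\mathcal{M}$ (Lemma \ref{Th:KSuperTransitiveSubstructure}), the extension of this point in $\mathcal{K}$ is exactly $\{x,y\}$. So the pair computed in $\mathcal{M}$ is also the pair in $\mathcal{K}$.

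There is no real obstacle here. The only point needing care is that the index stays standard, so that the element lands in the union defining $K$, which is taken over standard $n$ only. Taking the maximum of two standard numbers and adding one clearly keeps the index standard.
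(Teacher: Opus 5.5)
Your proof is correct and follows the paper's argument. You choose a standard $n$ with $\mathcal{M} \models (x, y \in K_n)$ and observe that $\{x,y\} \in \mathcal{P}(K_n) \subseteq K_{n+1}$. You also spell out the monotonicity of the $K_n$ and the role of the $\mathcal{P}(K_n)$ clause, which the paper leaves implicit.
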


\begin{proof}
Let $x, y \in K$. Let $n \in \omega$ be such that $\mathcal{M} \models (x, y \in K_n)$. Therefore $\mathcal{M} \models \{x, y \} \in K_{n+1})$ and $\{x, y\} \in K$. \hfill$\square$
\end{proof}

\begin{lemma} 
For all $n \in \omega$, $K_n^{\mathcal{M}} \in K$.
\end{lemma}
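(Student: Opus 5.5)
We argue by induction on the standard natural number $n$, working inside $\mathcal{M}$ to show that $\mathcal{M} \models (K_n \in K_{n+m})$ for some standard $m$ (we expect $m=1$ or $m=2$ to suffice). The natural witness is the clause $\mathcal{P}(K_{j}) \subseteq K_{j+1}$ in the definition: if $\mathcal{M} \models (K_n \subseteq K_j)$ for some standard $j$, then $\mathcal{M} \models (K_n \in \mathcal{P}(K_j) \subseteq K_{j+1})$, and hence $K_n^{\mathcal{M}} \in K$ by the definition of $K$.

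The plan therefore reduces to verifying, inside $\mathcal{M}$, that $K_n \subseteq K_n$, which is trivial. Concretely, for each standard $n$ we have $\mathcal{M} \models (K_n \in \mathcal{P}(K_n))$, and by definition $\mathcal{M} \models (\mathcal{P}(K_n) \subseteq K_{n+1})$. Thus $\mathcal{M} \models (K_n \in K_{n+1})$. Since $n+1$ is standard, $K_{n+1}^{\mathcal{M}}$ is one of the sets whose extension is included in the union defining $K$, so $K_n^{\mathcal{M}} \in (K_{n+1}^{\mathcal{M}})^* \subseteq K$. No induction is actually needed; the only care required is to note that the sequence $\langle K_n \mid n \in \omega^{\mathcal{M}}\rangle$ is defined by recursion inside $\mathcal{M}$, so that for each standard $n$ the element $K_n^{\mathcal{M}}$ exists and satisfies the recursion clause $K_{n+1} = K_n \cup \mathcal{P}(K_n) \cup \bigcup K_n$ in $\mathcal{M}$.

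The main (mild) obstacle is bookkeeping between the metatheory and $\mathcal{M}$: we must make sure that the standard index $n+1$ is used, so that membership in $K_{n+1}^{\mathcal{M}}$ really places the point in $K$, which is a union over standard indices only. Alternatively, one could note that $\mathcal{M} \models (K_n \subseteq K_{n+1})$, so $K_n^{\mathcal{M}}$ is an $\mathcal{M}$-subset of an element of $K$ (namely $K_{n+1}^{\mathcal{M}}$, once known to be in $K$). However, this route is circular, so we prefer the direct powerset-clause argument above.
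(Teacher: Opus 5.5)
Your argument is correct and is essentially the paper's own proof: $\mathcal{M} \models (K_n \subseteq K_n)$, so the powerset clause gives $\mathcal{M} \models (K_n \in K_{n+1})$, and since $n+1$ is standard this puts $K_n^{\mathcal{M}}$ in $K$. As you note yourself, the induction framing you start with is not needed.
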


\begin{proof}
Let $n \in \omega$. We have $\mathcal{M} \models (K_n \subseteq K_n)$, so $\mathcal{M} \models (K_n \in K_{n+1})$ and $K_n^{\mathcal{M}} \in K$. \hfill$\square$
\end{proof}

\begin{theorem}
The structure $\mathcal{K}$ satisfies $\mathsf{S}_0+\textsf{Infinity}+\textsf{Powerset}+\Delta_0^{\mathcal{P}}\textsf{-Separation}+\Delta_0^{\mathcal{P}}\textsf{-Collection}+\textsf{Set-Foundation}+\mathsf{AC}$.
\end{theorem}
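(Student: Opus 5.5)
Most of the axioms can be read off from the earlier lemmas, using that $\mathcal{K} \subseteq_{\mathsf{tr}}^{\mathcal{P}} \mathcal{M}$ (Lemma \ref{Th:KSuperTransitiveSubstructure}). Because of this, every $\Delta_0^{\mathcal{P}}$-formula with parameters in $K$ is absolute between $\mathcal{K}$ and $\mathcal{M}$.

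\textsf{Extensionality} holds because $\mathcal{K}$ is a transitive substructure: the extension of $a \in K$ computed in $\mathcal{K}$ coincides with $a^*$ computed in $\mathcal{M}$. \textsf{Empty Set} holds since $\emptyset \in \omega \subseteq K_0$. \textsf{Pair} and \textsf{Union} follow from Lemmas \ref{Th:PairingInK} and \ref{Th:UnionsInK}. \textsf{Difference} holds because $x \backslash y \subseteq x$, so it lies in $K$ by supertransitivity. \textsf{Powerset} follows from Lemma \ref{Th:PowersetsInK} together with absoluteness of ``$z=\mathcal{P}(x)$'', which is $\Delta_0^{\mathcal{P}}$. \textsf{Infinity} holds because $\omega^{\mathcal{M}} \in K_1$, being a subset of $K_0$.

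For $\Delta_0^{\mathcal{P}}\textsf{-Separation}$, take $w, \vec{a} \in K$ and a $\Delta_0^{\mathcal{P}}$-formula $\phi$. Separation in $\mathcal{M}$ gives $v=\{x \in w \mid \phi(x,\vec a)\}$. Since $v \subseteq w$, we have $v \in K$, and absoluteness shows that $v$ is the required set in $\mathcal{K}$. \textsf{Set-Foundation} holds because each nonempty $a \in K$ has an $\in^{\mathcal{M}}$-minimal element in $\mathcal{M}$, and the condition ``$y \cap a=\emptyset$'' is $\Delta_0$ and so absolute. For $\mathsf{AC}$, take $x \in K$ (say in the form ``every set of nonempty pairwise disjoint sets has a choice set''). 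A choice set in $\mathcal{M}$ is a subset of $\bigcup x \in K$, hence lies in $K$. Alternatively, a well-ordering of $x$ in $\mathcal{M}$ is a subset of $x\times x$; this set lies in $K$ by pairing, powerset and supertransitivity, so the well-ordering does too. In either form the relevant property is $\Delta_0$, so it transfers.

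The main obstacle is $\Delta_0^{\mathcal{P}}\textsf{-Collection}$, because $K$ is an $\omega$-union taken in the metatheory and is not definable in $\mathcal{M}$. Suppose $\mathcal{K} \models (\forall x \in b)\exists y \phi(x,y,\vec a)$ with $\phi$ in $\Delta_0^{\mathcal{P}}$. I would use an overspill argument in $\mathcal{M}$. By absoluteness, for each $x \in b^*$ there exist a standard $n$ and some $y \in K_n^{\mathcal{M}}$ with $\mathcal{M} \models \phi(x,y,\vec a)$. Inside $\mathcal{M}$ the sequence $\langle K_n \mid n \in \omega\rangle$ is a set, so the formula $\theta(n) \equiv (\forall x \in b)(\exists y \in K_n)\phi(x,y,\vec a)$ is first-order over $\mathcal{M}$. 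Assume, for a contradiction, that $\theta(n)$ fails for every standard $n$. Then, in $\mathcal{M}$, the least $m$ with $\theta(m)$ (if it exists) is nonstandard. Now $\mathcal{M}$ sees some $x \in b$ that has no witness in any $K_n$ with $n<m$. That conclusion alone is not contradictory, however, since the $x$ may vary with $n$, so overspill does not apply directly.

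The way around this is to use choice and collection in $\mathcal{M}$ to define $g(x)=\min\{n \mid (\exists y \in K_n)\phi(x,y,\vec a)\}$ for $x \in b$. For $x \in b^*$ every value $g(x)$ is standard. In $\mathcal{M}$, either $\mathsf{rng}(g)$ is bounded by a standard number, in which case we are done, or $\bigcup\mathsf{rng}(g)$ is nonstandard. In the second case $\mathsf{rng}(g)$ would be a set in $\mathcal{M}$ containing only standard elements but with no standard bound. Such a set has a least element above each standard $n$, which forces a nonstandard element into $\mathsf{rng}(g)$, a contradiction. So some standard $n$ satisfies $\theta(n)$, and $K_n^{\mathcal{M}} \in K$ by the last lemma serves as the collecting set. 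I expect this overspill step to need the most care.
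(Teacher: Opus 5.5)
Your proposal is correct and is essentially the paper's proof: the axioms other than collection come from supertransitivity and the closure lemmas. For $\Delta_0^{\mathcal{P}}$-Collection, your $g$ is exactly the paper's set $q$ with $\mathsf{rng}(g)$ as its $d$, bounded by a standard $l$ because otherwise the standard natural numbers would form a set in $\mathcal{M}$, so $K_l^{\mathcal{M}}$ collects the witnesses. Two small slips: ``a least element above each standard $n$'' should read ``nonstandard $n$'' (more simply, $\mathsf{rng}(g)$ is bounded in $\mathcal{M}$ by any nonstandard number, so it has an $\mathcal{M}$-maximum, which must be standard), and defining $g$ needs only separation on $b \times \omega$ in $\mathcal{M}$, not choice or collection.
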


\begin{proof}
Lemmas \ref{Th:KSuperTransitiveSubstructure}, \ref{Th:PowersetsInK}, \ref{Th:UnionsInK} and \ref{Th:PairingInK} immediately imply that $\mathcal{K}$ satisfies $\mathsf{S}_0+\textsf{Powerset}+\Delta_0^{\mathcal{P}}\textsf{-Separation}+\textsf{Set-Foundation}+\mathsf{AC}$. Since $\mathcal{M} \models (\omega \subseteq K_0)$, $\mathcal{M} \models (\omega \in K_1)$ and $\omega^{\mathcal{M}} \in K$. Therefore, $\textsf{Infinity}$ holds in $\mathcal{K}$. We are left to verify that $\Delta_0^{\mathcal{P}}\textsf{-Collection}$ holds in $\mathcal{K}$. Towards this end, let $\phi(x, y, \vec{z})$ be a $\Delta_0^{\mathcal{P}}$-formula. Let $b, \vec{a} \in K$ be such that 
\begin{equation} \label{eq:CollectionAntecedent}
\mathcal{K} \models (\forall x \in b)\exists y \phi(x, y, \vec{a}).
\end{equation}
Work inside $\mathcal{M}$. Let
\[
q= \{ \langle x, n \rangle \in b \times \omega \mid (\exists y \in K_n) \phi(x, y, \vec{a}) \land (\forall m \in n)(\forall z \in K_m) \neg \phi(x, z, \vec{a})\},
\]
and let $d= \mathsf{rng}(q)$.

Work in the metatheory again. Now, by (\ref{eq:CollectionAntecedent}), $d^* \subseteq \omega$. Therefore, there exists $l \in \omega$ such that for all $n \in d^*$, $n < l$, otherwise the standard natural numbers would be a set in $\mathcal{M}$. So, 
\[
\mathcal{K} \models (\forall x \in b)(\exists y \in K_l)\phi(x, y, \vec{a}).
\]
This shows that $\Delta_0^{\mathcal{P}}\textsf{-Collection}$ holds in $\mathcal{K}$. \hfill$\square$  
\end{proof}

\section[Questions]{Questions}

In \cite{mat06}, Mathias introduces two weakenings of the scheme of $\Delta_0\textsf{-Collection}$:
\begin{itemize}
\item[]($\textsf{Flat } \Delta_0\textsf{-Replacement}$): For all $\Delta_0$-formulae, $\phi(x, y)$,
\[
\forall z \forall u ((\forall x \in u) \exists ! y (\phi(x, y) \land y \subseteq z) \Rightarrow \exists v \forall y( y \in v \iff (\exists x \in u) (\phi(x, y) \land y \subseteq z))).
\]
\item[]($\textsf{Flat } \Delta_0\textsf{-Collection}$):
\[
\forall z \forall u ((\forall x \in u)\exists y (\phi(x, y) \land y \subseteq z) \Rightarrow \exists v (\forall x \in u) (\exists y \in v) (\phi(x, y) \land y \subseteq z)).
\] 
\end{itemize}
\begin{itemize}
\item The theory $\mathsf{fReR}$ is obtained from $\mathsf{ReR}$ by replacing $\Delta_0\textsf{-Replacement}$ with $\textsf{Flat }\Delta_0\textsf{-Replacement}$.
\item The theory $\mathsf{fReC}$ is obtained from $\mathsf{ReR}$ by replacing $\Delta_0\textsf{-Replacement}$ with $\textsf{Flat }\Delta_0\textsf{-Collection}$ and $\Delta_0\textsf{-Separation}$.
\item The theories $\mathsf{fReR}^{\neg \infty}$ and $\mathsf{fReC}^{\neg \infty}$ are obtained from $\mathsf{fReR}$ and $\mathsf{fReC}$, respectively, by adding $\neg \mathsf{Infinity}$.  
\end{itemize}
Note that $\mathsf{fReR}$ is subsystem of $\mathsf{fReC}$. Since the scheme $\textsf{Flat } \Delta_0\textsf{-Collection}$ is provable in $\mathsf{Z}$, the model presented in \cite[\S 12]{mat06} shows that $\mathsf{fReC}$ does not prove $\mathsf{TCo}$.

\begin{question}
Do the theories $\mathsf{fReR}^{\neg \infty}$ and $\mathsf{fReC}^{\neg \infty}$ prove $\mathsf{TCo}$?
\end{question}
  
Theorem \ref{Th:EverySetInBijectionWithSetOfNumbers} shows that the theory $\mathsf{ReR}^{\neg \infty}$ proves that every set is in bijection with a natural number. In \cite[Corollary 10]{bf93} and \cite[Remark 2.2(d)]{esv09}, it is observed that $\mathsf{ReR}_0+\textsf{Replacement}+\textsf{Powerset}+\neg \textsf{Infinity}$, which has access to the powersets but no foundation, proves that every set is in bijection with a natural number. In contrast, Kunen \cite[\S 7]{bf93}, shows that $\mathsf{ReR}_0+\textsf{Replacement}+\neg\textsf{Infinity}$ does not prove that every set is in bijection with a natural number.

\begin{question}
Do the theories $\mathsf{fReR}^{\neg \infty}$ and $\mathsf{fReC}^{\neg \infty}$ prove the totality of the inverse Ackermann interpretation?
\end{question}

\begin{question}
Do the theories $\mathsf{fReR}^{\neg \infty}$ and $\mathsf{fReC}^{\neg \infty}$ prove that every set is in bijection with a natural number?
\end{question}

\end{document}